\pgfplotsset{compat=1.10}
\newtheorem{theorem}{Theorem}[section]
\newtheorem{lemma}[theorem]{Lemma}
\newtheorem{proposition}[theorem]{Proposition}
\newtheorem{remark}[theorem]{Remark}
\newtheorem{example}[theorem]{Example}
\newcommand{\R}{\mathbb{R}}
\newcommand{\rd}{\mathrm{d}}
\definecolor{cadmiumgreen}{rgb}{0.0, 0.42, 0.24}
\numberwithin{equation}{section}
\begin{document}

\title[$H^2$-regularity for a 2d transmission problem with geometric constraint]{$H^2$-regularity for a two-dimensional transmission problem with geometric constraint}

\author{Philippe Lauren\c{c}ot}
\address{Institut de Math\'ematiques de Toulouse, UMR~5219, Universit\'e de Toulouse, CNRS \\ F--31062 Toulouse Cedex 9, France}
\email{laurenco@math.univ-toulouse.fr}
\author{Christoph Walker}
\address{Leibniz Universit\"at Hannover\\ Institut f\" ur Angewandte Mathematik \\ Welfengarten 1 \\ D--30167 Hannover\\ Germany}
\email{walker@ifam.uni-hannover.de}
%
%
\date{\today}
\keywords{transmission problem, regularity, non-Lipschitz domain}
\subjclass[2010]{35B65 - 35J25 - 35J20}
%
%
\begin{abstract}
The $H^2$-regularity of variational solutions to a two-dimensional transmission problem with geometric constraint is investigated, in particular when part of the interface becomes part of the outer boundary of the domain due to the saturation of the  geometric constraint. In such a situation, the domain includes some non-Lipschitz subdomains with cusp points, but it is shown that this feature does not lead to a regularity breakdown.  Moreover, continuous dependence of the solutions with respect to the domain is established.
\end{abstract}
%
\maketitle
%
\section{Introduction}\label{IMR}

The $H^2$-regularity of variational solutions to a two-dimensional transmission problem with geometric constraint is investigated, in particular when part of the interface becomes part of the outer boundary of the domain due to the geometric constraint, a situation in which the domain includes some non-Lipschitz subdomains with cusp points. 

To set up the geometric framework, let $D:=(-L,L)$ be a finite interval of $\mathbb{R}$, $L>0$, and let $H>0$ and $d>0$ be two positive parameters. Given a function $u\in C(\bar{D},[-H,\infty))$ with $u(\pm L)=0$, we define the subdomain $\Omega(u)$ of $D\times (-H,\infty)$ by
$$
 \Omega(u) := \left\{ (x,z)\in D\times \mathbb{R} \,:\, -H<  z <  u(x)+d \right\} = \Omega_1(u)\cup \Omega_2(u)\cup \Sigma(u)\,,
$$
where 
$$
\Omega_1(u) := \left\{ (x, z)\in D\times \mathbb{R} \,:\, -H< z< {u}(x)\right\}
$$
and 
$$
\Omega_2(u):= \left\{ (x,z)\in D\times \mathbb{R}\,:\, u(x)<  z <  u(x)+d\right\}
$$
are separated by the interface
$$
\Sigma(u) := \left\{ (x,z)\in D\times \mathbb{R}\,:\, z=  u(x)>-H \right\}\,.
$$
\begin{figure}
	\begin{tikzpicture}[scale=0.9]
		\draw[black, line width = 1.5pt, dashed] (-7,0)--(7,0);
		\draw[black, line width = 2pt] (-7,0)--(-7,-2.5);
		\draw[black, line width = 2pt] (7,-2.5)--(7,0);
		\draw[black, line width = 2pt] (-7,-2.5)--(7,-2.5);
		\draw[blue, line width = 2pt] plot[domain=-7:7] (\x,{-0.75-0.75*cos((pi*\x/7) r)});
		\draw[blue, line width = 2pt] plot[domain=-7:7] (\x,{-1.25-0.75*cos((pi*\x/7) r)});
		\draw[blue, line width = 1pt, arrows=->] (2,0)--(2,-1.65);
		\node at (2.2,-0.6) {${\color{blue} v}$};
		\node at (-5,-1.5) {${\color{blue} \Omega_1(v)}$};
		\node at (-3,0.5) {${\color{blue} \Omega_2(v)}$};
		\draw (-3.6,0.5) edge[->,bend right,line width = 1pt] (-4.7,-0.7);
		\node at (0,-3.25) {$D$};
		\node at (5.75,-1.75) {{\color{blue} $\Sigma(v)$}};
		\draw (5.25,-1.75) edge[->,bend left, line width = 1pt] (3.5,-1.3);
		\node at (-7.8,1) {$z$};
		\draw[black, line width = 1pt, arrows = ->] (-7.5,-3)--(-7.5,1);
		\node at (-8,-2.5) {$-H$};
		\draw[black, line width = 1pt] (-7.6,-2.5)--(-7.4,-2.5);
		\node at (-7.8,-0.5) {$0$};
		\draw[black, line width = 1pt] (-7.6,-0.5)--(-7.4,-0.5);
		\node at (-7.8,0) {$d$};
		\draw[black, line width = 1pt] (-7.6,0)--(-7.4,0);
		\node at (-7,-3.25) {$-L$};
		\node at (7,-3.25) {$L$};
		\draw[black, line width = 1pt, arrows = <->] (-7,-2.75)--(7,-2.75);
	\end{tikzpicture}
	\caption{Geometry of $\Omega(v)$ for a state $v\in S$ with empty coincidence set.}\label{Fig1}
		\begin{tikzpicture}[scale=0.9]
		\draw[black, line width = 1.5pt, dashed] (-7,0)--(7,0);
		\draw[black, line width = 2pt] (-7,0)--(-7,-2.5);
		\draw[black, line width = 2pt] (7,-2.5)--(7,0);
		\draw[black, line width = 2pt] (-7,-2.5)--(7,-2.5);
		\draw[red, line width = 2pt] plot[domain=-7:7] (\x,{-1-cos((pi*\x/7) r)});
		\draw[red, line width = 2pt] plot[domain=-7:7] (\x,{-1.5-cos((pi*\x/7) r)});
		\draw[red, line width = 1pt, arrows=->] (2,0)--(2,-2.05);
		\node at (2.3,-0.6) {${\color{red} w}$};
		\node at (-5,-1.5) {${\color{red} \Omega_1(w)}$};
		\node at (-3,0.5) {${\color{red} \Omega_2(w)}$};
		\draw (-3.65,0.5) edge[->,bend right,line width = 1pt] (-4.7,-0.75);
		\node at (0,-3.25) {$D$};
		\node at (5.75,-1.75) {{\color{red} $\Sigma(w)$}};
		\draw (5.25,-1.75) edge[->,bend left, line width = 1pt] (3.5,-1.6);
		\node at (-7.8,1) {$z$};
		\draw[black, line width = 1pt, arrows = ->] (-7.5,-3)--(-7.5,1);
		\node at (-8,-2.5) {$-H$};
		\draw[black, line width = 1pt] (-7.6,-2.5)--(-7.4,-2.5);
		\node at (-7.8,-0.5) {$0$};
		\draw[black, line width = 1pt] (-7.6,-0.5)--(-7.4,-0.5);
		\node at (-7.8,0) {$d$};
		\draw[black, line width = 1pt] (-7.6,0)--(-7.4,0);
		\node at (-7,-3.25) {$-L$};
		\node at (7,-3.25) {$L$};
		\draw[black, line width = 1pt, arrows = <->] (-7,-2.75)--(7,-2.75);
		\node at (-1.5,-3.25) {${\color{red} \mathcal{C}(w)}$};
		\draw (-1,-3.25) edge[->,bend right, line width = 1pt] (0,-2.55);
		\draw[black, line width = 2pt] (-7,-2.5)--(7,-2.5);
	\end{tikzpicture}
\caption{Geometry of $\Omega(w)$ for a state $w\in \mathcal{S}$ with non-empty coincidence set.}\label{Fig2}
\end{figure}
Owing to the (geometric) constraint $u\ge -H$, the lower boundary of $\Omega_2(u)$, given by the graph of the function $u$, cannot go beyond the lower boundary $D\times \{-H\}$ of $\Omega_1(u)$ but may coincide partly with it, along the so-called coincidence set 
\begin{equation}
	\mathcal{C}(u) := \{x\in D\,:\, u(x)=-H\}\,, \label{CS}
\end{equation}
see Figures~\ref{Fig1} and~\ref{Fig2}. Clearly, the geometry of $\Omega(u)$, as well as the regularity of its boundary, heavily depends on whether $\min_D\{u\}>-H$ or $\min_D\{u\}=-H$. Indeed, if $\min_D\{u\}>-H$ (i.e. the graph of $u$ is strictly separated from $D\times \{-H\}$ as in Figure~\ref{Fig1}), then the coincidence set $\mathcal{C}(u)$
is empty and $\Omega_1(u)$ is connected. In contrast, if $\min_D\{u\}=-H$ so that the graph of $u$ intersects $D\times \{-H\}$, then $\mathcal{C}(u)\ne\emptyset$ and $\Omega_1(u)$ is disconnected with at least two (and possibly infinitely many) connected components, see Figures~\ref{Fig2} and~\ref{Fig3}.

For such a geometry, we study the regularity of variational solutions to the transmission problem
\begin{subequations}\label{psi}
	\begin{align}
		\mathrm{div}(\sigma\nabla\psi_u) &=0 \quad\text{in }\ \Omega(u)\,,\label{a1a}\\
		\llbracket \psi_u \rrbracket =\llbracket \sigma\nabla \psi_u \rrbracket \cdot \mathbf{n}_{ \Sigma(u)} &=0 \quad\text{on }\ \Sigma(u)\,,\label{a1b}\\
		\psi_u&=h_u\quad\text{on }\ \partial\Omega(u)\,,\label{a1c}
	\end{align}
\end{subequations}
where 
\begin{equation*}
	\sigma := \sigma_1 \mathbf{1}_{\Omega_1(u)} + \sigma_2 \mathbf{1}_{\Omega_2(u)}
\end{equation*}
for some positive constants $\sigma_1\ne \sigma_2$, and $\mathbf{n}_{ \Sigma(u)}$ denotes the unit normal vector field to $\Sigma(u)$ (pointing into ${\Omega}_2(u)$) given by
$$
\mathbf{n}_{ \Sigma(u)}:=\frac{(-\partial_x u, 1)}{\sqrt{1+(\partial_x u)^2}}\,.
$$
In \eqref{a1c}, $h_u$ is a suitable function reflecting the boundary behavior of $\psi_u$, see Section~\ref{sec.abv} for details. In addition, $\llbracket \cdot \rrbracket$ denotes the (possible) jump across the interface $\Sigma(u)$; that is, 
\begin{equation*}
	\llbracket f \rrbracket(x,u(x)) := f|_{\Omega_1(u)}(x,u(x)) - f|_{\Omega_2(u)}(x,u(x))\,, \qquad x\in D\,,
\end{equation*}
whenever meaningful for a function $f:\Omega(u)\to\mathbb{R}$. 

\medskip

Let us already mention that there are several features of the specific geometry of $\Omega(u)$ which may hinder the $H^2$-regularity of the solution $\psi_u$ to \eqref{psi}. Indeed, on the one hand, the interface $\Sigma(u)$ always intersects with the boundary $\partial\Omega(u)$ of $\Omega(u)$ and it follows from \cite{Lem77} that this sole property prevents the $H^2$-regularity of $\psi_u$, unless $\sigma$ and the angles between $\Sigma(u)$ and $\partial\Omega(u)$ at the intersection points satisfy some additional conditions. On the other hand, $\Omega(u)$ and $\Omega_2(u)$ are at best Lipschitz domains, while $\Omega_1(u)$ may consist of non-Lipschitz domains with cusp points.

\medskip

The particular geometry $\Omega(u)= \Omega_1(u)\cup \Omega_2(u)\cup \Sigma(u)$, in which the boundary value problem~\eqref{psi} is set, is encountered in the investigation of an idealized electrostatically actuated microelectromechanical system (MEMS) as described in detail in~\cite{LW18}. Such a device consists of an elastic plate of thickness~$d$ which is fixed at its boundary $\{\pm L\} \times (0,d)$ and suspended above a rigid conducting ground plate located at $z=-H$. The elastic plate  is made up of a dielectric material and deformed by a Coulomb force induced by holding the ground plate and the top of the elastic plate at different electrostatic potentials. In this context, $u$ represents the vertical deflection of the bottom of the elastic plate, so that the elastic plate is given by $\Omega_2(u)$, while $\Omega_1(u)$ denotes the free space between the elastic plate and the ground plate. An important feature of the model is that the elastic plate cannot penetrate the ground plate, resulting on the geometric constraint $u\ge -H$. Still, a contact between the elastic plate and the ground plate --~corresponding to a non-empty coincidence set $\mathcal{C}(u)$~--  is explicitly allowed. The dielectric properties of $\Omega_1(u)$ and $\Omega_2(u)$ are characterized by positive constants $\sigma_1$ and $\sigma_2$, respectively. The electrostatic potential $\psi_u$ is then supposed to satisfy \eqref{psi} and is completely determined by the deflection $u$. The state of the MEMS device is thus described by the deflection $u$, and equilibrium configurations of the device are obtained as critical points of the total energy  which is the sum of the mechanical and electrostatic energies, the former being a functional of $u$ while the latter is the Dirichlet integral of $\psi_u$. Owing to the nonlocal dependence of $\psi_u$ on $u$, minimizing the total energy and deriving the associated Euler-Lagrange equation demand quite precise information on the regularity of the electrostatic potential $\psi_u$ for an arbitrary, but {\it fixed} function $u$ and its continuous dependence thereon. This first step of provisioning the required information is the main purpose of the present research, and we refer to the forthcoming paper~\cite{LW2?} where the minimizing problem leading to the determination of~$u$ is analyzed.

\medskip

Since the regularity of the variational solution $\psi_u$ to \eqref{psi} is intimately connected with the regularity of the boundaries of $\Omega(u)$, $\Omega_1(u)$, and $\Omega_2(u)$, let us first mention that $\Omega(u)$ and $\Omega_2(u)$ are always Lipschitz domains and that the measures of the angles at their vertices do not exceed $\pi$, a feature which complies with the $H^2$-regularity of $\psi_u$ away from the interface $\Sigma(u)$ \cite{Gr85}. This property is shared by $\Omega_1(u)$ when the coincidence set $\mathcal{C}(u)$ is empty, see Figure~\ref{Fig1}, so that it is expected that $\psi|_{\Omega_i(u)}$ belongs to $H^2(\Omega_i(u))$, $i=1,2$, in that case. However, when $\mathcal{C}(u)$ is non-empty, the open set $\Omega_1(u)$ is no longer connected and the boundary of its connected components is no longer Lipschitz, but features cusp points. Moreover, there is an interplay between the transmission conditions~\eqref{a1b} and the  boundary condition~\eqref{a1c} when $\mathcal{C}(u)\ne\emptyset$. Whether $\psi|_{\Omega_i(u)}$ still belongs to $H^2(\Omega_i(u))$, $i=1,2$, in this situation is thus an interesting question, that we answer positively in our first result. For the precise statement, we introduce the functional setting we shall work with in the sequel. Specifically, we set 
$$
\bar{\mathcal{S}} := \{v\in H^2(D) \cap H_0^1(D)\,:\, v\ge -H \text{ in } D \;\text{ and }\; \pm \llbracket \sigma \rrbracket \partial_x v(\pm L) \le 0 \}\,,
$$ 
and 
$$
\mathcal{S}:=\{v\in H^2(D) \cap H_0^1(D)\,:\, v> -H \text{ in } D  \;\text{ and }\; \pm \llbracket \sigma \rrbracket \partial_x v(\pm L) \le 0 \}\,.
$$ 
Clearly, the coincidence set $\mathcal{C}(u)$ is empty if and only if $u\in \mathcal{S}$. In addition, the situation already alluded to, where $\mathcal{C}(u)$ is non-empty and $\Omega_1(u)$ is a  disconnected open set in $\R^2$ with a non-Lipschitz boundary, corresponds to functions $u\in\bar{\mathcal{S}}\setminus \mathcal{S}$. Also, we include the constraint $\pm \llbracket \sigma \rrbracket \partial_x u(\pm L) \le 0$  in the definition of $\mathcal{S}$ and $\bar{\mathcal{S}}$ to guarantee that the way $\Sigma(u)$ and $\partial\Omega(u)$ intersect does not prevent the $H^2$-regularity of $\psi_u$ in smooth situations (i.e. $u\in \mathcal{S}\cap W_\infty^2(D)$), see \cite{Lem77}.

\begin{theorem}\label{Thm1}
Suppose \eqref{bobbybrown} below. 
\begin{itemize}
\item[(a)] For each $u\in \bar{\mathcal{S}}$, there is a unique variational solution $\psi_u \in h_{u}+H_{0}^1(\Omega(u))$ to \eqref{psi}.  Moreover, $\psi_{u,1}:= \psi_{u}|_{\Omega_1(u)} \in H^2(\Omega_1(u))$ and $\psi_{u,2} := \psi_{u}|_{\Omega_2(u)} \in H^2(\Omega_2(u))$, and $\psi_{u}$  is a strong solution to the transmission problem~\eqref{psi}.
\item[(b)] Given $\kappa>0$, there is $c(\kappa)>0$ such that, for every $u\in\bar{\mathcal{S}}$ satisfying $\|u\|_{H^2(D)}\le \kappa$, 
\begin{equation*}
\|\psi_u\|_{H^1(\Omega(u))} + \|\psi_{u,1}\|_{H^2(\Omega_1(u))} + \|\psi_{u,2}\|_{H^2(\Omega_2(u))} \le c(\kappa)\,.
\end{equation*}	
\end{itemize}
\end{theorem}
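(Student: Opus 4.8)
The plan is to dispatch existence and uniqueness by a standard energy argument and then to concentrate all the effort on the $H^2$-regularity and the uniform bound, which is where the geometry of $\Omega(u)$ makes itself felt. For part~(a), once a suitable $H^1$-extension of the boundary data $h_u$ is fixed (see Section~\ref{sec.abv}), the bilinear form $(\phi,\chi)\mapsto\int_{\Omega(u)}\sigma\nabla\phi\cdot\nabla\chi$ is continuous and coercive on $H^1_0(\Omega(u))$ because $\sigma$ is bounded between $\min\{\sigma_1,\sigma_2\}$ and $\max\{\sigma_1,\sigma_2\}$; since $\Omega(u)$ is bounded, the Poincar\'e inequality and the Lax--Milgram theorem yield a unique $\psi_u\in h_u+H^1_0(\Omega(u))$ solving the weak formulation of~\eqref{psi}, and this step makes no use of the regularity of $\partial\Omega(u)$. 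The weak formulation encodes both the equation~\eqref{a1a} on each subdomain and the transmission conditions~\eqref{a1b}. The remaining, and substantial, task is to upgrade $\psi_u$ to a strong solution with $\psi_{u,i}\in H^2(\Omega_i(u))$, together with a bound depending only on $\kappa=\|u\|_{H^2(D)}$; two obstructions must be overcome, namely the intersection of the interface $\Sigma(u)$ with $\partial\Omega(u)$ at the points $(\pm L,0)$, and the cusps of $\Omega_1(u)$ created by a non-empty coincidence set $\mathcal{C}(u)$.

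I would treat the non-degenerate case $u\in\mathcal{S}$ (empty coincidence set) first. Since $\Omega_2(u)$ has constant vertical width $d$, the shear $(x,z)\mapsto(x,z-u(x))$ is a bi-Lipschitz $C^1$-diffeomorphism onto the fixed rectangle $D\times(0,d)$, with constants controlled by $\|\partial_x u\|_\infty\lesssim\|u\|_{H^2(D)}$; a similar (now $\min_D u+H$-dependent) map flattens $\Omega_1(u)$ onto $D\times(-H,0)$. In these coordinates $\psi_{u,i}$ solves a uniformly elliptic equation in divergence form whose coefficients depend only on the tangential variable through $\partial_x u$, hence lie in $H^1(D)$, and the interface becomes the flat segment $D\times\{0\}$ across which the transformed jump conditions hold. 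On the flat pieces I would run tangential difference quotients to obtain the mixed and tangential second derivatives, and then read off the normal second derivative from the equation; the only delicate point is that the first-order coefficient $\partial_x^2 u$ is merely in $L^2(D)$, so the product $\partial_x^2 u\,\partial_y\tilde\psi$ sits exactly at the borderline of $L^2$ in two dimensions. This is closed by the anisotropic observation that, the coefficients being independent of the normal variable, $\partial_y\tilde\psi$ belongs to $H^1(D;L^2(0,d))\hookrightarrow C(\bar D;L^2(0,d))$, so that $x\mapsto\|\partial_y\tilde\psi(x,\cdot)\|_{L^2(0,d)}$ is bounded and the borderline product is genuinely in $L^2$. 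The behaviour at the four corners of $\Omega_2(u)$ and at the corners of $\Omega_1(u)$ is handled by the corner theory of~\cite{Gr85}, all interior angles being at most $\pi$, while at the two interface--boundary junctions $(\pm L,0)$ I would invoke~\cite{Lem77}: the standing sign constraint $\pm\llbracket\sigma\rrbracket\partial_x u(\pm L)\le0$ built into $\mathcal{S}$ and $\bar{\mathcal{S}}$, together with~\eqref{bobbybrown}, guarantees that the leading singular exponent is at least $1$, so that no $H^2$-singularity is generated there.

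The coincidence case $u\in\bar{\mathcal{S}}\setminus\mathcal{S}$ and the uniform bound of part~(b) would then be obtained simultaneously by approximation. For $\varepsilon\in(0,1)$ the rescaling $u_\varepsilon:=(1-\varepsilon)u$ belongs to $\mathcal{S}$: it still vanishes at $\pm L$, satisfies the sign condition since $(1-\varepsilon)>0$, obeys $u_\varepsilon>-H$ in $D$, and converges to $u$ in $H^2(D)$ with $\|u_\varepsilon\|_{H^2(D)}\le\|u\|_{H^2(D)}$. If the estimates of the non-degenerate case are arranged to depend only on $\kappa=\|u\|_{H^2(D)}$ and not on $\min_D u+H$, then $\{\psi_{u_\varepsilon}\}$ is bounded in the relevant $H^2$-norms; transforming all problems to the fixed rectangle $D\times(-H,d)$ --- onto which $\Omega(u_\varepsilon)$ maps non-degenerately because the total height $u_\varepsilon+d+H\ge d$ never degenerates --- one extracts weak limits and identifies them with $\psi_u$ by the uniqueness in part~(a), recovering $\psi_{u,i}\in H^2(\Omega_i(u))$ and the bound~(b) in the limit. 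Geometrically, the cusps of $\Omega_1(u)$ are harmless because at a coincidence point the interface meets the bottom $D\times\{-H\}$ tangentially and the enclosed $\sigma_1$-region pinches to zero width with interior angle $0$; the associated singular exponent $\pi/\omega\to+\infty$ as $\omega\to0$, so cusps favour, rather than destroy, $H^2$-regularity.

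The crux of the whole argument --- and the step I expect to be the main obstacle --- is precisely the uniformity of the $H^2$-bounds as $\min_D u\downarrow-H$. The naive flattening of $\Omega_1(u)$ has constants blowing up like $H/(\min_D u+H)$, so the uniform estimate cannot be read off from the transformed problem on a fixed rectangle and must instead be extracted intrinsically, exploiting that the full domain $\Omega(u)$ never pinches and that the degenerating $\sigma_1$-sliver carries Dirichlet data on both of its long sides and hence contributes negligibly near the forming cusp. Making this quantitative, uniformly over the approximating family $\{u_\varepsilon\}$ and in a way compatible with the borderline corner analysis at $(\pm L,0)$, is the technical heart of the proof.
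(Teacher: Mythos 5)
Your overall architecture (energy method for existence, $H^2$-regularity for non-degenerate $u$, approximation of $u\in\bar{\mathcal{S}}\setminus\mathcal{S}$ by $u_\varepsilon=(1-\varepsilon)u\in\mathcal{S}$, passage to the limit) matches the paper's, and your choice of approximating family is legitimate. But the proposal has a genuine gap, and it is exactly the one you flag yourself in the last paragraph: you never supply the mechanism that makes the $H^2$-bound depend only on $\|u\|_{H^2(D)}$ and not on $\min_D u+H$. Saying that the pinching $\sigma_1$-sliver ``contributes negligibly'' is a heuristic, not an estimate, and the corner-theory remark that a cusp has singular exponent $\pi/\omega\to\infty$ is not a proof either: a cusp point is not a vertex of a ($W^2_\infty$-)curvilinear polygon, so the corner theory of \cite{Gr85} and \cite{Lem77} simply does not apply there. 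The paper's resolution is concrete and is the heart of its Section~\ref{sec.h2e}: an integration-by-parts identity in the spirit of \cite[Lemma~4.3.1.2]{Gr85} (Lemmas~\ref{L2a}--\ref{L2}, with the extension to piecewise-constant boundary constraints proved in Appendix~\ref{sec.id}) converts $\int\sigma\,\partial_x^2\chi\,\partial_z^2\chi$ into $\int\sigma|\partial_x\partial_z\chi|^2$ plus one boundary term and one interface term; the interface term involves $\llbracket\sigma|\nabla\chi|^2\rrbracket$ on $\Sigma(u)$, and the transmission conditions \eqref{z6}--\eqref{z7} are used to bound $\nabla\chi_1|_{\Sigma(u)}$ pointwise by $\nabla\chi_2|_{\Sigma(u)}$, so that \emph{all} trace terms are estimated from the $\Omega_2(u)$ side (Lemmas~\ref{L3}--\ref{L4}), where the flattening map is uniformly bi-Lipschitz because $\Omega_2(u)$ has fixed width $d$. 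Uniformity in $\min_D u+H$ then comes for free, and Young's inequality closes the estimate. Without this (or an equivalent) device, your difference-quotient scheme has nothing to absorb the interface and corner contributions uniformly; note also that tangential difference quotients are problematic precisely at the junctions $(\pm L,0)$ where $\Sigma(u)$ meets $\partial\Omega(u)$, which is why the paper works with the global identity \eqref{DI} rather than with localized translations.

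A second, smaller gap is the identification of the limit. You extract weak limits of $\psi_{u_\varepsilon}$ and ``identify them with $\psi_u$ by the uniqueness in part~(a)'', but uniqueness on the \emph{fixed} domain $\Omega(u)$ does not by itself tell you that the limit is the variational solution on $\Omega(u)$: the domains $\Omega(u_\varepsilon)$ vary, so you must prove stability of the minimization problem under domain perturbation. The paper does this via $\Gamma$-convergence of the Dirichlet energies (Proposition~\ref{C3} and Appendix~\ref{sec.pp3.3}), including a recovery-sequence construction that uses Hausdorff convergence of the domains; your proposed global flattening of $\Omega(u_\varepsilon)$ onto a fixed rectangle does not preserve the piecewise-$H^2$ structure uniformly, since the restriction of that map to $\Omega_1(u_\varepsilon)$ degenerates as the coincidence set forms. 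The paper instead passes to the limit on compact subsets $U_i\Subset\Omega_i(u)$ and recovers the global bound by Fatou's lemma (Lemma~\ref{lez1}); you would need to adopt some such argument for your limit step to go through.
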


It is worth emphasizing that, for $i\in\{1,2\}$, the restriction of $\psi_u$ to $\Omega_i(u)$ belongs to $H^2(\Omega_i(u))$ for all $u\in\bar{\mathcal{S}}$. In particular, there is no regularity breakdown when the coincidence set $\mathcal{C}(u)$ is non-empty. A similar observation is made in \cite{ARMA20} for a different geometric setting when one of the two subsets does not depend on the function $u$. 

\begin{remark}\label{rem.clamped}
When the upper part $\Omega_2(v)$ is clamped at its lateral  boundaries in the sense that 
	\begin{equation*}
		u\in H_0^2(D) := \{v\in H^2(D)\cap H_0^1(D)\,:\, \partial_x v(\pm L) = 0\}\,,
	\end{equation*}
	Theorem~\ref{Thm1} applies whatever the values of $\sigma_1$ and $\sigma_2$. 
\end{remark}

Theorem~\ref{Thm1} is an immediate consequence of Proposition~\ref{ACDC} below. Its proof begins with quantitative $H^2$-estimates on $\psi_u$ depending only on $\|u\|_{H^2(D)}$ for sufficiently smooth functions in $\mathcal{S}$, the $H^2$-regularity of $\psi_u$ being guaranteed by \cite{Lem77} in that case. Since the class of functions for which these estimates are valid is dense in $\bar{\mathcal{S}}$, we complete the proof with a compactness argument, the main difficulty to be faced being the dependence of $\Omega(u)$ on $u$. 

\noindent More precisely, we begin with a variational approach to \eqref{psi} and first show in Section~\ref{sec.vs} by classical arguments that, given $u\in \bar{\mathcal{S}}$,  the variational solution $\psi_{u}$ to \eqref{psi} corresponds to the minimizer on $h_{u}+H_{0}^1(\Omega(u))$ of the associated Dirichlet energy
\begin{equation*}
\mathcal{J}(u)[\theta]:= \frac{1}{2}\int_{\Omega(u)} \sigma |\nabla \theta|^2\,\rd (x,z)\,, \qquad \theta \in h_u + H_0^1(\Omega(u))\,. 
\end{equation*}
Thanks to this characterization, we use $\Gamma$-convergence tools to show the $H^1$-stability of $\psi_u$ with respect to $u$ in Section~\ref{sec.h1s}. Section~\ref{sec.h2r} is devoted to the study of the $H^2$-regularity of $\psi_u$ which we first establish in Section~\ref{sec.h2r1} for smooth functions $u\in \mathcal{S}\cap W_\infty^2(D)$ (thus having an empty coincidence set), relying on the analysis performed in \cite{Lem77}. It is worth mentioning that the constraint involving $\llbracket \sigma\rrbracket$ in the definition of $\mathcal{S}$ comes into play here. For $u\in \mathcal{S}\cap W_\infty^2(D)$, we next derive quantitative $H^2$-estimates on $\psi_u$ which only depend on $\|u\|_{H^2(D)}$ as stated in Theorem~\ref{Thm1}~(b), see Section~\ref{sec.h2e}. The building  block is an identity in the spirit of \cite[Lemma~4.3.1.2]{Gr85} allowing us to interchange derivatives with respect to $x$ and $z$ in some integrals involving second-order derivatives, its proof being provided in Appendix~\ref{sec.id}. We then combine these estimates with the already proved $H^1$-stability of variational solutions to \eqref{psi} and use a compactness argument to extend the $H^2$-regularity of $\psi_u$ to arbitrary functions $u\in \bar{\mathcal{S}}$ in Section~\ref{sec.h2e1}. In this step, special care is required to cope with the variation of the functional spaces with $u$. In fact, as a side product of the proof of Theorem~\ref{Thm1}, we obtain qualitative information on the continuous dependence of $\psi_u$ with respect to $u$, which we collect in the next result.

\begin{theorem}\label{Thm2}
Suppose \eqref{bobbybrown} below. 
Let $\kappa>0$, $u\in \bar{\mathcal{S}}$, and consider a sequence $(u_n)_{n\ge 1}$ in $\bar{\mathcal{S}}$ such that 
\begin{equation}
	\|u_n\|_{H^2(D)}\le \kappa\,, \quad n\ge 1\,, \qquad \lim_{n\to\infty} \|u_n-u\|_{H^1(D)} = 0\,. \label{y0} 
\end{equation}
Setting $M := d + \max\left\{ \|u\|_{L_\infty(D)} \,,\, \sup_{n\ge 1}\{\|u_n\|_{L_\infty(D)}\} \right\}$, 
\begin{subequations}\label{y}
\begin{equation}
	\lim_{n\to \infty} \big\| (\psi_{u_n} - h_{u_n}) -(\psi_{u} - h_{u}) \big\|_{H^1(\Omega_M)} = 0\,. \label{y1}
\end{equation}
In addition, if $i\in\{1,2\}$ and $U_i$ is an open subset of $\Omega_i(u)$ such that $\bar{U}_i$ is a compact subset of $\Omega_i(u)$, then
\begin{equation}
	\psi_{u_n,i}\rightharpoonup \psi_{u,i} \quad\text{in}\quad H^2(U_i)\,. \label{y2}
\end{equation}
Also, for any $p\in [1,\infty)$, 
\begin{equation}
	\begin{split}
		\lim_{n\to\infty} \big\|\nabla\psi_{u_n,2}(\cdot,u_n) - \nabla\psi_{u,2}(\cdot,u) \big\|_{L_p(D,\R^2)} & = 0 \,, \\
		\lim_{n\to\infty} \big\| \nabla\psi_{u_n,2}(\cdot,u_n+d) - \nabla\psi_{u,2}(\cdot,u+d) \big\|_{L_p(D,\R^2)}  & = 0 \,.
	\end{split} \label{y3}
\end{equation}
\end{subequations}
\end{theorem}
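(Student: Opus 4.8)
The plan is to derive Theorem~\ref{Thm2} as a quantitative byproduct of the compactness argument that already underlies Theorem~\ref{Thm1}, rather than to prove it from scratch. Since the domains $\Omega(u_n)$ vary with $n$, the first and most delicate issue is to compare functions living on different domains. To this end I would fix the reference strip $\Omega_M := D\times(-H,M)$, which contains $\Omega(u)$ and all $\Omega(u_n)$ for $n$ large by the definition of $M$, and work with the shifted unknowns $\phi_n := \psi_{u_n} - h_{u_n}$ and $\phi := \psi_u - h_u$ in $H_0^1(\Omega(u_n))$ and $H_0^1(\Omega(u))$ respectively, extended by $0$ to all of $\Omega_M$. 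The bound $\|u_n\|_{H^2(D)}\le\kappa$ together with Theorem~\ref{Thm1}(b) furnishes a uniform $H^1(\Omega_M)$-bound on $\phi_n$, so that, up to a subsequence, $\phi_n\rightharpoonup\phi_*$ weakly in $H^1(\Omega_M)$ for some limit $\phi_*$.

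The core step is then to identify $\phi_* = \phi$, i.e. that the weak limit is exactly the shifted variational solution on the limit domain. This is precisely where the $\Gamma$-convergence / $H^1$-stability machinery established in Section~\ref{sec.h1s} enters: using $\|u_n - u\|_{H^1(D)}\to 0$ to control the convergence of the characteristic functions $\mathbf 1_{\Omega_i(u_n)}$ and of the conductivity $\sigma$, and the convergence of the boundary data $h_{u_n}\to h_u$, one shows that the Dirichlet energies $\mathcal J(u_n)$ $\Gamma$-converge to $\mathcal J(u)$ and that minimizers converge to the minimizer. By uniqueness of the variational solution (Theorem~\ref{Thm1}(a)) the whole sequence converges, giving \eqref{y1}; upgrading weak to strong convergence in $H^1(\Omega_M)$ follows from convergence of the energies together with weak convergence, via the usual norm-convergence argument for a Hilbert space.

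For the interior $H^2$-statement \eqref{y2}, I would combine the uniform $H^2(\Omega_i(u_n))$-bounds from Theorem~\ref{Thm1}(b) with the fact that, for an open set $U_i$ with $\bar U_i\Subset\Omega_i(u)$, the $H^1$-convergence $\|u_n-u\|_{H^1(D)}\to 0$ (which in one space dimension gives uniform convergence of $u_n\to u$ on $\bar D$) guarantees $\bar U_i\subset\Omega_i(u_n)$ for $n$ large. The uniform $H^2(U_i)$-bound then yields a weakly convergent subsequence in $H^2(U_i)$, whose limit is identified with $\psi_{u,i}$ through the already established $H^1$-convergence, and uniqueness of the limit again promotes the statement to the full sequence. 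Finally, \eqref{y3} concerns the traces of $\nabla\psi_{u_n,2}$ on the two graphs $\{z=u_n\}$ and $\{z=u_n+d\}$. Here I would flatten these graphs by the change of variables $(x,z)\mapsto(x, z-u_n(x))$ pulling $\Omega_2(u_n)$ back to the fixed rectangle $D\times(0,d)$; in these straightened coordinates the trace operator onto $D\times\{0\}$ and $D\times\{d\}$ is fixed and continuous from $H^2$, so the weak $H^2$-convergence of the pulled-back functions (combined with the $H^2(D)$-convergence, hence uniform $C^1$-type control, of $u_n$ via the $\kappa$-bound and interpolation) transfers to $L_p$-convergence of the traces for every $p\in[1,\infty)$ by compactness of the trace embedding into $L_p(D)$.

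The main obstacle I expect is the identification $\phi_*=\phi$ in the second paragraph: keeping track of the three interacting sources of domain-dependence — the moving interface $\Sigma(u_n)$, the moving outer boundary where $\Omega_2(u_n)$ sits, and the appearance or disappearance of the coincidence set when $u\in\bar{\mathcal S}\setminus\mathcal S$ — while passing to the limit in the variational formulation requires genuine care, particularly in showing that test functions on $\Omega(u)$ can be approximated by test functions on $\Omega(u_n)$ (the recovery-sequence direction of $\Gamma$-convergence). The cusp points arising in the limit when the coincidence set is nonempty make this approximation the truly nontrivial part, and it is the step I would allocate the most effort to.
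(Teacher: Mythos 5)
Your plan for \eqref{y1} and \eqref{y2} coincides with the paper's: \eqref{y1} is exactly Proposition~\ref{C3}, proved by the $\Gamma$-convergence argument of Lemma~\ref{P3} (uniform $H^1$-bounds on the zero-extensions, identification of the weak limit with the minimizer on $\Omega(u)$ by uniqueness, and upgrade to strong convergence via convergence of the energies), and \eqref{y2} is exactly the argument of Lemma~\ref{lez1}/Proposition~\ref{ACDC}~(b) (uniform convergence of $u_n$ gives $\bar U_i\subset\Omega_i(u_n)$ for large $n$, the uniform $H^2$-bound of Theorem~\ref{Thm1}~(b) gives weak compactness in $H^2(U_i)$, and the $H^1$-limit identifies the cluster point). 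You correctly single out the recovery-sequence direction as the delicate step; the paper settles it via Hausdorff convergence of the domains and the stability theorem of \v{S}ver\'ak for the Dirichlet problem, using that $\overline{\Omega_M}\setminus\Omega(u_n)$ is connected.

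For \eqref{y3} your strategy (flatten $\Omega_2(u_n)$ onto a fixed rectangle and pass to the limit in the traces) is again the paper's, but one specific claim is wrong as stated: the straightened functions $\phi_n(x,\eta)=\psi_{u_n,2}(x,\eta+u_n(x)-1)$ are \emph{not} uniformly bounded in $H^2$ of the rectangle, hence do not converge weakly there. Indeed $\partial_x^2\phi_n$ contains the term $\partial_x^2 u_n\,\partial_z\psi_{u_n,2}$, and under the hypothesis \eqref{y0} one only controls $\partial_x^2 u_n$ in $L_2(D)$ while $\partial_z\psi_{u_n,2}$ is in $H^1(\Omega_2(u_n))$; the product therefore lies in $L_q$ for every $q<2$ but in general not in $L_2$. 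The paper's Proposition~\ref{U2} circumvents this by establishing a uniform $W_q^2(\mathcal{R}_2)$-bound for $q\in(1,2)$, extracting weak $W_q^2$-convergence, and then using the continuity of the trace into $W_q^{1-1/q}(D)$ together with the compact embedding of $W_q^{s-1/q}(D)$ into $L_p(D)$ for a suitable choice of $q$ and $s$ depending on $p$. Your argument goes through once "weak $H^2$-convergence of the pulled-back functions" is replaced by this $W_q^2$-version; as written, the uniform $H^2$-bound you invoke is unavailable and that step fails.
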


Clearly, the quantity $M$ introduced in Theorem~\ref{Thm2} is finite due to \eqref{y0} and the continuous embedding of $H^1(D)$ in $C(\bar{D})$.

\medskip

\paragraph{\textbf{Notation}}
Given $v\in \bar{\mathcal{S}}$, $f\in L_2(\Omega(v))$, and $i\in\{1,2\}$, we denote the restriction of $f$ to $\Omega_i(v)$ by $f_i$; that is, $f_i := f|_{\Omega_i(v)}$.

Throughout the paper, $c$ and $(c_k)_{k\ge 1}$ denote positive constants depending only on $L$, $H$, $d$, $V$, $\sigma_1$, and $\sigma_2$. The dependence upon additional parameters will be indicated explicitly.

\section{The Boundary Values}\label{sec.abv}

We state the precise assumptions on the function $h_v$ occurring in \eqref{a1c}. Roughly speaking, we assume that it is the trace on $\partial\Omega(v)$ of a function $h_v\in H^1(\Omega(u))$ which is such that $h|_{\Omega_i(v)}$ belongs to $H^2(\Omega_i(v))$ for $i=1,2$ and satisfies the transmission conditions \eqref{a1b}, as well as suitable boundedness and continuity properties with respect to $u$. 

Specifically, for every $v\in \bar{\mathcal{S}}$, let 
$$
h_v: D\times (-H,\infty)\rightarrow \R
$$ 
be such that
\begin{subequations}\label{bobbybrown}
\begin{equation}\label{200}
h_v\in H^1(\Omega(v))\,,\qquad h_{v,i}:=h_v\vert_{\Omega_i(v)}\in H^2\big(\Omega_i(v)\big)\,,\quad i=1,2\,,
\end{equation}
and suppose that $h_v$ satisfies the transmission conditions
\begin{equation}\label{201}
	\llbracket h_v\rrbracket=\llbracket \sigma\nabla h_v\rrbracket\cdot {\bf n}_{\Sigma(v)}=0 \ \text{ on }\ \Sigma(v)\,.
\end{equation}
For $\kappa>0$ given, there is $c(\kappa)>0$ such that, for all $v\in \bar{\mathcal{S}}$ satisfying $\|v\|_{H^2(D)}\le \kappa$,
\begin{equation}\label{202}
\| h_{v,i}\|_{H^2(\Omega_i(v))}\le c(\kappa)\,, \quad i=1,2\,.
\end{equation}
Moreover, given $v\in\bar{\mathcal{S}}$ and a sequence $(v_n)_{n\ge 1}$ in $\bar{\mathcal{S}}$ satisfying  
\begin{equation*}
\lim_{n\to \infty} \|v_n - v\|_{H^1(D)} = 0\,,
\end{equation*}
we assume that
\begin{equation}\label{204}
\lim_{n\rightarrow\infty} \| h_{v_n} -  h_{v}\|_{H^1(D\times (-H,M))}=0
\end{equation}
and 
\begin{equation}\label{205}
\lim_{n\rightarrow \infty} \|h_{v_n}(\cdot, v_n+d) -  h_{v}(\cdot, v+d)\|_{C(\bar{D})} = 0\,,
\end{equation}
where
\begin{equation*}
	M := d + \max\left\{ \|v\|_{L_\infty(D)} \,,\, \sup_{n\ge 1}\{\|v_n\|_{L_\infty(D)}\} \right\}<\infty\,.
\end{equation*}
\end{subequations}
Observe that the convergence of $(v_n)_{n\ge 1}$, the continuous embedding of $H^1(D)$ in $C(\bar{D})$, and \eqref{204} imply that
\begin{equation}\label{203}
	\lim_{n\rightarrow\infty} \int_{\Omega(v_n)}\sigma\vert\nabla h_{v_n}\vert^2\,\rd (x,z)= \int_{\Omega(v)}\sigma\vert\nabla h_{v}\vert^2\,\rd (x,z)\,.
\end{equation} 

From now on, we impose the conditions~\eqref{bobbybrown} throughout.

\medskip

We finish this short section by providing an example of $h_v$ satisfying the imposed conditions~\eqref{bobbybrown}.

\begin{example}\label{ex1}
Let $\zeta\in C^2(\R)$ be such that $\zeta|_{(-\infty,1]}\equiv 0$ and $\zeta|_{[1+d,\infty)} \equiv V$ for some $V>0$. Given $v\in\bar{\mathcal{S}}$, put
\begin{equation}\label{exx1}
h_v(x,z) :=\zeta(z- v(x)+1)\,, \qquad -H \le z \,, \quad x\in \bar{D}\,.
\end{equation}
Then \eqref{200}-\eqref{205} are satisfied. In addition, 
$$
h_v(x,-H)=0\,, \quad h_v(x,v(x)+d)=V\,,\qquad x\in D\,.
$$
In the context of a MEMS device alluded to in the introduction, these additional properties mean that the ground plate and the top of the elastic plate are kept at constant potential. For instance, $\zeta(r):=V\min\{1,(r-1)^2/d^2\}$ for $r>1$ and $\zeta\equiv 0$ on $(-\infty,1]$ will do.
\end{example}

\section{Variational Solution to \eqref{psi}}\label{sec.vs}

In this section we investigate the properties of the variational solution $\psi_v$ to~\eqref{psi} for~$v\in \bar{\mathcal{S}}$ and, in particular, its $H^1$-stability.

\subsection{A Variational Approach to \eqref{psi}}\label{sec.min}

Given $v\in \bar{\mathcal{S}}$ we introduce the set of admissible potentials
$$
\mathcal{A}(v):=h_{v}+H_{0}^1(\Omega(v))\,,
$$
on which we define the functional
\begin{equation}\label{sos}
	\mathcal{J}(v)[\theta]:=\frac{1}{2}\int_{ \Omega(v)} \sigma \vert\nabla \theta\vert^2\,\rd (x,z)\,,\qquad \theta\in \mathcal{A}(v)\,.
\end{equation}

The variational solution $\psi_v$  to the transmission problem \eqref{psi} is then the minimizer of the functional $\mathcal{J}(v)$ on the set $\mathcal{A}(v)$:

\begin{lemma}\label{L1}
For each $v\in \bar{\mathcal{S}}$ there is a unique minimizer $\psi_v \in \mathcal{A}(v)$ of $\mathcal{J}(v)$ on $\mathcal{A}(v)$; that is, 
\begin{equation}
	\mathcal{J}(v)[\psi_v] = \min_{\theta\in \mathcal{A}(v)} \mathcal{J}(v)[\theta]\,. \label{z0}
\end{equation}
In addition,
\begin{equation}
\int_{\Omega(v)}\sigma\vert\nabla\psi_v\vert^2\,\rd(x,z)\le \int_{\Omega(v)}\sigma\vert\nabla h_v\vert^2\,\rd(x,z)\,. \label{z1}
\end{equation}
\end{lemma}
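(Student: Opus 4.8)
The plan is to establish Lemma~\ref{L1} by the direct method of the calculus of variations. The functional $\mathcal{J}(v)$ defined in~\eqref{sos} is a quadratic form involving only the gradient, and since $\sigma$ takes the two positive values $\sigma_1,\sigma_2$, we have the two-sided bound $\min\{\sigma_1,\sigma_2\}\,|\nabla\theta|^2 \le \sigma |\nabla\theta|^2 \le \max\{\sigma_1,\sigma_2\}\,|\nabla\theta|^2$ pointwise. The set $\mathcal{A}(v) = h_v + H_0^1(\Omega(v))$ is a nonempty closed affine subspace of $H^1(\Omega(v))$, the nonemptiness being guaranteed by~\eqref{200}, which ensures $h_v\in H^1(\Omega(v))$.

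The key steps proceed as follows. First I would verify coercivity and boundedness from below: writing a competitor as $\theta = h_v + \phi$ with $\phi\in H_0^1(\Omega(v))$, the functional equals $\frac12\int_{\Omega(v)}\sigma|\nabla h_v + \nabla\phi|^2$, which is bounded below by $0$ and controls $\|\nabla\phi\|_{L_2(\Omega(v))}$ along any minimizing sequence via the Poincar\'e inequality on $H_0^1(\Omega(v))$ (valid because $\Omega(v)$ is bounded). Thus a minimizing sequence $(\theta_k)$ is bounded in $H^1(\Omega(v))$ and, up to a subsequence, converges weakly in $H^1(\Omega(v))$ to some $\psi_v$; since $\mathcal{A}(v)$ is convex and closed, hence weakly closed, the limit lies in $\mathcal{A}(v)$. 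The convex functional $\mathcal{J}(v)$ is weakly lower semicontinuous, so $\psi_v$ is a minimizer, giving~\eqref{z0}. For uniqueness, I would invoke strict convexity: the second variation is $\int_{\Omega(v)}\sigma|\nabla\phi|^2 > 0$ for any nonzero $\phi\in H_0^1(\Omega(v))$ (again by Poincar\'e), so two minimizers would have identical gradients and, vanishing on the boundary, coincide. Finally, the bound~\eqref{z1} is immediate from~\eqref{z0} by comparing $\psi_v$ against the admissible competitor $h_v\in\mathcal{A}(v)$ itself, for which $\mathcal{J}(v)[h_v] = \frac12\int_{\Omega(v)}\sigma|\nabla h_v|^2$.

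I expect this lemma to be essentially routine, as it is the standard Dirichlet-principle argument; the only point demanding mild care is that $\Omega(v)$ may be a non-Lipschitz domain with cusps when $\mathcal{C}(v)\neq\emptyset$. However, this irregularity does not obstruct the argument here: the Poincar\'e inequality on $H_0^1$ holds for any bounded open set (it follows by extending functions by zero and using the inequality on a bounding box), and weak lower semicontinuity of a nonnegative convex quadratic form in the gradient requires no boundary regularity whatsoever. The genuine geometric difficulties associated with the cusps are deferred to the $H^2$-regularity analysis in later sections and play no role in the mere existence, uniqueness, and energy bound asserted here.
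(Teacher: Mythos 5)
Your proposal is correct and follows essentially the same route as the paper: the direct method with coercivity from $\sigma\ge\min\{\sigma_1,\sigma_2\}>0$ and Poincar\'e's inequality, weak lower semicontinuity, strict convexity for uniqueness, and comparison with the admissible competitor $h_v$ for the energy bound. Your additional remark that the Poincar\'e inequality on $H_0^1(\Omega(v))$ is insensitive to the cusp geometry is accurate and is consistent with the explicit version the paper records separately in Lemma~\ref{LP}.
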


\begin{proof}
Let $v\in \bar{\mathcal{S}}$ and recall that $h_v\in H^1(\Omega(v))$ according to \eqref{200}. Thus, the existence of a minimizer $\psi_v$ of $\mathcal{J}(v)$ on $\mathcal{A}(v)$ readily follows from the direct method of calculus of variations due to the lower semicontinuity and coercivity of $\mathcal{J}(v)$ on $\mathcal{A}(v)$, the latter being ensured by the assumption $\sigma\ge \min\{\sigma_1,\sigma_2\}>0$ and Poincar\'e's inequality.  The uniqueness of $\psi_v$ is guaranteed by the strict convexity of $\mathcal{J}(v)$. Next, since obviously $h_v\in \mathcal{A}(v)$, the inequality \eqref{z1} is an immediate consequence of the minimizing property \eqref{z0} of $\psi_v$.
\end{proof}

For further use, we report the following version of Poincar\'e's inequality for functions in $H^1_0(\Omega(v))$ with a constant depending mildly on $v\in\bar{\mathcal{S}}$.

\begin{lemma}\label{LP}
Let $v\in\bar{\mathcal{S}}$ and $\theta\in H_0^1(\Omega(v))$. Then
\begin{equation*}
	\|\theta\|_{L_2(\Omega(v))} \le 2 \|H+d+v\|_{L_\infty(D)} \|\partial_z \theta\|_{L_2(\Omega(v))}\,.
\end{equation*}
\end{lemma}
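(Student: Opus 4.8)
The plan is to exploit the column structure of $\Omega(v)$: for each fixed $x\in D$ the vertical slice $\{z\,:\,(x,z)\in\Omega(v)\}$ is exactly the interval $(-H,v(x)+d)$, whose length $H+d+v(x)$ is bounded above by $\|H+d+v\|_{L_\infty(D)}$, and a function in $H_0^1(\Omega(v))$ vanishes on the bottom edge $z=-H$. This reduces the estimate to the one-dimensional Poincar\'e inequality applied slice by slice, followed by integration in $x$. By the very definition of $H_0^1(\Omega(v))$ as the closure of $C_c^\infty(\Omega(v))$ in $H^1(\Omega(v))$, it suffices to establish the inequality for $\theta\in C_c^\infty(\Omega(v))$; the general case then follows by passing to the limit, both sides being continuous for the $H^1$-norm. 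This is the only point where the potentially non-Lipschitz character of $\partial\Omega(v)$ could intervene, but since no trace or extension theorem is invoked --~only the density built into the definition of $H_0^1$~-- no regularity of the boundary is needed.

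Fixing $\theta\in C_c^\infty(\Omega(v))$ and $x\in D$, the function $\theta(x,\cdot)$ is smooth and vanishes for $z$ near $-H$, so the fundamental theorem of calculus gives $\theta(x,z)=\int_{-H}^z \partial_z\theta(x,s)\,\rd s$ for every $z\in(-H,v(x)+d)$. The Cauchy--Schwarz inequality then yields
\[
|\theta(x,z)|^2 \le (H+d+v(x)) \int_{-H}^{v(x)+d} |\partial_z\theta(x,s)|^2\,\rd s\,,
\]
and integrating this bound over $z\in(-H,v(x)+d)$ produces the slicewise estimate
\[
\int_{-H}^{v(x)+d} |\theta(x,z)|^2\,\rd z \le (H+d+v(x))^2 \int_{-H}^{v(x)+d} |\partial_z\theta(x,z)|^2\,\rd z\,.
\]

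Finally, integrating in $x\in D$ and applying Fubini's theorem on both sides, together with the bound $H+d+v(x)\le\|H+d+v\|_{L_\infty(D)}$, gives $\|\theta\|_{L_2(\Omega(v))}^2 \le \|H+d+v\|_{L_\infty(D)}^2\,\|\partial_z\theta\|_{L_2(\Omega(v))}^2$, from which the claim follows after taking square roots (the stated factor $2$ being a comfortable overestimate of the sharp constant $1$ obtained here). I expect no genuine obstacle: this is the classical slicewise Poincar\'e argument, and the factor $2$ leaves ample room. The only point meriting a brief check is that the vertical slices are never empty, which holds because the constraint $v\ge -H$ forces each slice to contain at least the interval $(-H,-H+d)$ of positive length $d$.
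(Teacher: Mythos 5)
Your proof is correct and follows essentially the same route as the paper's: a slicewise Poincar\'e inequality in the vertical direction, exploiting that $\theta$ vanishes at $z=-H$ and that each vertical slice has length $H+d+v(x)$. The only difference is cosmetic --~the paper writes $\theta^2=2\int\theta\,\partial_z\theta$ and applies Cauchy--Schwarz globally (hence the factor $2$), whereas you apply the fundamental theorem of calculus and Cauchy--Schwarz on each slice, which even yields the sharper constant~$1$.
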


\begin{proof}
For $x\in D$ and $z\in (-H,v(x)+d)$,
\begin{equation*}
	\theta(x,z)^2 = 2 \int_{-H}^z \theta(x,y) \partial_z\theta(x,y)\ \mathrm{d}y\,.
\end{equation*}
Hence, after integration with respect to $(x,z)$ over $\Omega(v)$,
\begin{align*}
	\|\theta\|_{L_2(\Omega(v))}^2 & = \int_{\Omega(v)} \theta(x,z)^2\ \mathrm{d}(x,z) \\
	& \le 2 \|H+d+v\|_{L_\infty(D)} \int_{\Omega(v)} |\theta(x,y)| |\partial_z \theta(x,y)|\ \mathrm{d}(x,z) \\
	& \le 2 \|H+d+v\|_{L_\infty(D)} \|\theta\|_{L_2(\Omega(v))} \|\partial_z \theta\|_{L_2(\Omega(v))}\,,
\end{align*}
from which we deduce the stated inequality.
\end{proof}

\subsection{$H^1$-Stability of $\psi_v$}\label{sec.h1s}

The purpose of this section is to study the continuity properties of the solution $\psi_v$ to \eqref{z0} with respect to $v$. More precisely, we aim at establishing the following result.

\begin{proposition}\label{C3}
Consider $v\in \bar{\mathcal{S}}$ and a sequence $(v_n)_{n\ge 1}$ in $\bar{\mathcal{S}}$ such that 
\begin{equation}\label{o1}
	v_n\rightarrow v \ \text{ in }\ H_0^1(D)\,,
\end{equation}
and set
\begin{equation}
	M := d + \max\left\{ \|v\|_{L_\infty(D)} \,,\, \sup_{n\ge 1}\{\|v_n\|_{L_\infty(D)}\}  \right\}\,, \label{z10}
\end{equation}
which is finite by \eqref{o1} and the continuous embedding of $H^1(D)$ in $C(\bar{D})$. Then
$$
\lim_{n\to\infty} \left\| (\psi_{v_n}-h_{v_n}) - (\psi_v - h_v) \right\|_{H_0^1(D\times (-H,M))} = 0
$$
and
$$
\lim_{n\to\infty} \mathcal{J}(v_n)[\psi_{v_n}] = \mathcal{J}(v)[\psi_v]\,.
$$
\end{proposition}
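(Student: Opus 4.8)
The plan is to use the $\Gamma$-convergence machinery suggested by the variational characterization in Lemma~\ref{L1}. The central difficulty is that the functionals $\mathcal{J}(v_n)$ are defined on the varying admissible sets $\mathcal{A}(v_n) = h_{v_n} + H_0^1(\Omega(v_n))$, and the domains $\Omega(v_n)$ themselves depend on $n$. To work on a fixed domain, I would first note that by \eqref{z10} every $\Omega(v_n)$ and $\Omega(v)$ is contained in the fixed cylinder $\Omega_M := D\times(-H,M)$, so I would extend all competitors by zero (after subtracting the boundary data) and compare them in the fixed space $H_0^1(\Omega_M)$. Concretely, set $\Phi_n := \psi_{v_n} - h_{v_n} \in H_0^1(\Omega(v_n))$ and $\Phi := \psi_v - h_v \in H_0^1(\Omega(v))$, extended by zero to $\Omega_M$; the goal is to show $\Phi_n \to \Phi$ in $H_0^1(\Omega_M)$.

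\medskip

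The argument would proceed in three steps. \emph{Step 1 (uniform bound and weak limit).} Combining the minimality estimate \eqref{z1}, the convergence \eqref{203} of the boundary energies, and the uniform Poincaré inequality of Lemma~\ref{LP} (whose constant is controlled by $\|H+d+v_n\|_{L_\infty(D)}\le H+M$), I obtain a uniform bound on $\|\Phi_n\|_{H_0^1(\Omega_M)}$. Hence, up to a subsequence, $\Phi_n \rightharpoonup \Psi$ weakly in $H_0^1(\Omega_M)$ for some limit $\Psi$. \emph{Step 2 ($\liminf$ inequality and identification of the limit).} The subtle point is to check that the weak limit $\Psi$ actually lies in $H_0^1(\Omega(v))$, i.e.\ vanishes a.e.\ on $\Omega_M\setminus\Omega(v)$; this uses the $H^1$-convergence $v_n\to v$ together with the fact that $\mathbf{1}_{\Omega(v_n)}\to\mathbf{1}_{\Omega(v)}$ in a suitable sense, so that the zero-extension is compatible in the limit. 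Once $\Psi\in H_0^1(\Omega(v))$ is known, weak lower semicontinuity of the Dirichlet integral against the varying coefficient $\sigma\mathbf{1}_{\Omega(v_n)}$ (which requires care because $\sigma$ is $v_n$-dependent through the partition $\Omega_1(v_n)\cup\Omega_2(v_n)$) gives
\begin{equation*}
	\mathcal{J}(v)[\Psi + h_v] \le \liminf_{n\to\infty} \mathcal{J}(v_n)[\Phi_n + h_{v_n}]\,.
\end{equation*}

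\medskip

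\emph{Step 3 (recovery sequence and conclusion).} For the matching $\limsup$, I would construct, for the minimizer $\psi_v$, a recovery sequence $\theta_n \in \mathcal{A}(v_n)$ with $\theta_n - h_{v_n} \to \psi_v - h_v$ strongly and $\mathcal{J}(v_n)[\theta_n]\to\mathcal{J}(v)[\psi_v]$; a natural candidate is obtained by transporting $\psi_v - h_v$ to $\Omega(v_n)$ via the vertical diffeomorphism that maps $\Omega(v)$ onto $\Omega(v_n)$ (rescaling the $z$-variable affinely between the graphs), whose Jacobian converges to the identity because $v_n\to v$ in $C(\bar D)$ and $\partial_x v_n\to\partial_x v$ in $L_2(D)$. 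The minimality of $\psi_{v_n}$ then yields $\mathcal{J}(v_n)[\psi_{v_n}]\le\mathcal{J}(v_n)[\theta_n]$, and combining this with the $\liminf$ inequality forces both $\Psi + h_v = \psi_v$ (by uniqueness of the minimizer in Lemma~\ref{L1}) and $\lim_n\mathcal{J}(v_n)[\psi_{v_n}] = \mathcal{J}(v)[\psi_v]$, giving the second assertion. Finally, convergence of the energies together with weak convergence upgrades to norm convergence, and since $\sigma\ge\min\{\sigma_1,\sigma_2\}>0$ this delivers the strong $H_0^1(\Omega_M)$ convergence $\Phi_n\to\Phi$, which is the first assertion. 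A standard subsequence argument removes the passage to a subsequence, the limit being uniquely determined. I expect Step~2 — rigorously handling the zero-extensions and the convergence of the characteristic functions $\mathbf{1}_{\Omega_i(v_n)}$ so that the limit lands in the correct space and the coefficient $\sigma$ passes to the limit correctly — to be the main obstacle, precisely because the coincidence set may be nonempty in the limit $v\in\bar{\mathcal{S}}\setminus\mathcal{S}$.
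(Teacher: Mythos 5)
Your overall strategy coincides with the paper's: both work on the fixed box $\Omega_M=D\times(-H,M)$ with the zero-extended differences $\psi_{v_n}-h_{v_n}$, prove a $\Gamma$-convergence statement for the shifted energies (the paper's Lemma~\ref{P3}), and then invoke the Fundamental Theorem of $\Gamma$-convergence together with convergence of the energies to upgrade weak $H_0^1(\Omega_M)$-convergence to strong convergence. Your Steps~1 and~2 are essentially the paper's argument: the uniform bound comes from \eqref{z1}, \eqref{203} and Lemma~\ref{LP}, and the identification of the limit as an element of $H_0^1(\Omega(v))$ is carried out in the paper by a trace computation on the moving upper boundary $\{z=v(x)+d\}$ using \eqref{204}, \eqref{205} and a one-dimensional H\"older estimate in $z$; your sketch of this step is compatible with that, even though "vanishing a.e.\ outside $\Omega(v)$" alone would need to be supplemented by exactly such a trace argument.

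The genuine gap is in your Step~3, the recovery sequence. Transporting $\psi_v-h_v$ to $\Omega(v_n)$ by the piecewise vertical affine map rescales the lower strip by the factor $(v(x)+H)/(v_n(x)+H)$ (or its reciprocal), and this degenerates precisely in the situation the proposition must cover: $v$ and the $v_n$ belong to $\bar{\mathcal{S}}$ and may touch $z=-H$, so the map is not bi-Lipschitz near the coincidence set, the chain rule for Sobolev functions is not available there, and the transported function need not lie in $H^1(\Omega(v_n))$ with controlled energy. Moreover, even away from the coincidence set, the transported energy contains terms of the type $\int(\partial_x v_n)^2\,|\partial_z(\psi_v-h_v)|^2$, and under the sole hypothesis \eqref{o1} you have $\partial_x v_n\to\partial_x v$ only in $L_2(D)$ with no uniform $L_\infty$-bound (Proposition~\ref{C3} does not assume a uniform $H^2$-bound on $(v_n)$), so the claim that "the Jacobian converges to the identity" in a sense strong enough to pass to the limit in the Dirichlet integral is not justified. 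The paper circumvents both difficulties by an entirely different construction: it sets $f:=-\Delta\theta\in H^{-1}(\Omega_M)$ and defines $\theta_n$ as the weak solution of $-\Delta\theta_n=f$ in $\Omega(v_n)$ with zero Dirichlet data, then uses that $d_H(\Omega(v_n),\Omega(v))\le\|v_n-v\|_{L_\infty(D)}\to 0$ and that $\overline{\Omega_M}\setminus\Omega(v_n)$ is connected to invoke \v{S}ver\'ak's two-dimensional domain-continuity theorem (\cite{Sv93}, \cite[Theorem~3.2.5]{HP05}), which yields $\theta_n\to\theta$ in $H_0^1(\Omega_M)$ without ever composing with a degenerate change of variables. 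You would need to replace your transport construction by an argument of this kind (or restrict to $v_n,v$ uniformly bounded away from $-H$, which is not the setting here) for the proof to go through.
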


To prove Proposition~\ref{C3}, we make use of a $\Gamma$-convergence approach and argue as in \cite[Section~3.2]{ARMA20} with minor changes. For the sake of completeness we  provide a complete proof in Appendix~\ref{sec.pp3.3}. 

\section{$H^2$-Regularity}\label{sec.h2r}

In the previous section we introduced the variational solution $\psi_v\in H^1(\Omega(v)$ to \eqref{psi} for arbitrary $v\in\bar{\mathcal{S}}$ and noticed its continuous dependence in $H^1(\Omega(v)$ with respect to $v$. We now aim at improving the $H^1$-regularity of $\psi_v|_{\Omega_i(v)}$ to $H^2(\Omega_i(v))$ for $i=1,2$. To this end we first consider the case of smooth functions $v\in \mathcal{S} \cap W_\infty^2(D)$ with empty coincidence sets and provide in Section~\ref{sec.h2r1}  and Section~\ref{sec.h2e}  the corresponding $H^2$-estimates that depend only on the norm of $v$ in $H^2(D)$ (but {\it not} on its $W_\infty^2(D)$-norm). In Section~\ref{sec.h2e1} we extend these estimates to the general case $v\in\bar{\mathcal{S}}$ by means of a compactness argument.

\subsection{$H^2$-Regularity for $v\in \mathcal{S}\cap W_\infty^2(D)$}\label{sec.h2r1}

Assuming that $v$ is smoother with an empty coincidence set, see Figure~\ref{Fig1}, the existence of a strong solution $\psi_v$ to \eqref{psi} is a consequence of the analysis performed in \cite{Lem77}.

\begin{proposition}\label{prz1}
	If $v\in \mathcal{S} \cap W_\infty^2(D)$, then the variational solution $\psi_v$ to \eqref{z0} satisfies 
	\begin{equation*}
		\psi_{v,i} := \psi_v|_{\Omega_i(v)} \in H^2(\Omega_i(v))\,, \quad i=1,2\,,
	\end{equation*} 
	and the transmission problem
	\begin{subequations}\label{psiS}
		\begin{align}
			\mathrm{div}(\sigma\nabla\psi_v)&=0 \quad\ \text{in }\ \Omega(v)\,,\label{a1aS}\\
			\llbracket \psi_v \rrbracket =\llbracket \sigma\nabla \psi_v \rrbracket \cdot \mathbf{n}_{ \Sigma(v)} &=0\quad\ \text{on }\ \Sigma(v)\,,\label{a1bS}\\
			\psi_v&=h_v\quad\text{on }\ \partial\Omega(v)\,.\label{a1cS}
		\end{align}
	\end{subequations}
	Moreover, $\partial_x \psi_v + \partial_x v \partial_z \psi_v$ and $-\sigma\partial_x v \partial_x \psi_v +\sigma \partial_z\psi_v$ both belong to $H^1(\Omega(v))$.
\end{proposition}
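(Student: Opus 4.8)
The plan is to derive the interior and up-to-the-boundary $H^2$-regularity of each restriction $\psi_{v,i}$ from the analysis of \cite{Lem77}, then upgrade the variational identity to the strong formulation~\eqref{psiS} by integration by parts, and finally identify the two distinguished combinations as (multiples of) the tangential derivative of $\psi_v$ and the normal flux $\sigma\nabla\psi_v\cdot\mathbf{n}_{\Sigma(v)}$ along $\Sigma(v)$, whose continuity across the interface turns them from piecewise into genuinely global $H^1$-functions. First I would record the geometric consequences of the assumption $v\in\mathcal{S}\cap W_\infty^2(D)$: since $v>-H$ on $\bar D$, the coincidence set $\mathcal{C}(v)$ is empty, $\Omega_1(v)$ is connected, and $\Omega(v)$, $\Omega_1(v)$, $\Omega_2(v)$ are all Lipschitz domains whose boundaries are piecewise $W_\infty^2$-smooth, the only corners being $(\pm L,-H)$, $(\pm L,0)$, and $(\pm L,d)$, each of aperture at most $\pi$. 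The interface $\Sigma(v)$ is the $W_\infty^2$-graph $z=v(x)$ and meets $\partial\Omega(v)$ only at the two points $(\pm L,0)$ located on the lateral sides $\{\pm L\}\times(-H,d)$.

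Next I would establish $\psi_{v,i}\in H^2(\Omega_i(v))$, $i=1,2$, in three regimes. Away from $\Sigma(v)$ and from $\partial\Omega(v)$, interior elliptic regularity applies to the constant-coefficient equation $\mathrm{div}(\sigma_i\nabla\psi_{v,i})=0$. Along the smooth interface $\Sigma(v)$ (away from its endpoints) and along the smooth portions and convex corners of $\partial\Omega(v)$, standard transmission and boundary regularity for Lipschitz domains with angles not exceeding $\pi$ yield local $H^2$-bounds \cite{Gr85}. The delicate step, which I expect to be the main obstacle, is the regularity at the two intersection points $(\pm L,0)$, where $\Sigma(v)$ abuts the Dirichlet boundary and the three regions $\Omega_1(v)$, $\Omega_2(v)$, and the exterior meet. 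Here I would invoke \cite{Lem77}, whose analysis characterizes, in terms of $\sigma_1$, $\sigma_2$, and the apertures that $\Omega_1(v)$ and $\Omega_2(v)$ subtend at $(\pm L,0)$, the absence of the singular solution that would otherwise obstruct $H^2$-regularity. The point is that this condition is precisely encoded in the constraint $\pm\llbracket\sigma\rrbracket\,\partial_x v(\pm L)\le 0$ built into the definition of $\mathcal{S}$; verifying that the apertures determined by the slopes $\partial_x v(\pm L)$ together with $\sigma_1,\sigma_2$ satisfy the hypotheses of \cite{Lem77} is the crux of this part, and it delivers $\psi_{v,i}\in H^2(\Omega_i(v))$.

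With this regularity in hand, I would pass from the variational identity $\int_{\Omega(v)}\sigma\nabla\psi_v\cdot\nabla\phi\,\rd(x,z)=0$ for $\phi\in H_0^1(\Omega(v))$ to the strong formulation. Integrating by parts separately on $\Omega_1(v)$ and $\Omega_2(v)$ (licit since $\psi_{v,i}\in H^2(\Omega_i(v))$), testing first with $\phi$ supported in the interior of a single subdomain yields $\mathrm{div}(\sigma\nabla\psi_v)=0$ in each $\Omega_i(v)$, hence~\eqref{a1aS}; the remaining boundary terms on $\partial\Omega(v)$ drop out because $\phi\in H_0^1(\Omega(v))$, and the surviving interface integral, for arbitrary $\phi$, forces $\llbracket\sigma\nabla\psi_v\rrbracket\cdot\mathbf{n}_{\Sigma(v)}=0$ on $\Sigma(v)$. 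The continuity condition $\llbracket\psi_v\rrbracket=0$ follows from $\psi_v\in H^1(\Omega(v))$ (matching of traces across the Lipschitz interface), and the boundary condition~\eqref{a1cS} from $\psi_v-h_v\in H_0^1(\Omega(v))$; this establishes~\eqref{a1bS}--\eqref{a1cS}.

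Finally, for the two combinations I would argue as follows. Differentiating the interface identity $\psi_{v,1}(x,v(x))=\psi_{v,2}(x,v(x))$ in $x$ gives $\llbracket\partial_x\psi_v+\partial_x v\,\partial_z\psi_v\rrbracket=0$ on $\Sigma(v)$, while the flux transmission condition, upon clearing the common factor $(1+(\partial_x v)^2)^{-1/2}$ from $\mathbf{n}_{\Sigma(v)}$, reads $\llbracket-\sigma\partial_x v\,\partial_x\psi_v+\sigma\partial_z\psi_v\rrbracket=0$ on $\Sigma(v)$; thus both quantities have matching traces across the interface. Each is moreover $H^1$ on $\Omega_i(v)$, since $\psi_{v,i}\in H^2(\Omega_i(v))$ ensures $\partial_x\psi_{v,i},\partial_z\psi_{v,i}\in H^1(\Omega_i(v))$ and the coefficients $\partial_x v$ and $\sigma_i\partial_x v$ are $W_\infty^1$ functions of $x$ alone (using $v\in W_\infty^2(D)$), so that the products remain in $H^1(\Omega_i(v))$. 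Invoking the standard gluing lemma that a function piecewise in $H^1$ with coincident traces across a Lipschitz interface belongs to $H^1$ of the whole domain \cite{Gr85}, I conclude that $\partial_x\psi_v+\partial_x v\,\partial_z\psi_v$ and $-\sigma\partial_x v\,\partial_x\psi_v+\sigma\partial_z\psi_v$ both lie in $H^1(\Omega(v))$, which completes the proof.
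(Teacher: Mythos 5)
Your proposal is correct and follows essentially the same route as the paper: $H^2$-regularity of the restrictions is obtained from the analysis of \cite{Lem77}, with the sign constraint $\pm\llbracket\sigma\rrbracket\,\partial_x v(\pm L)\le 0$ in the definition of $\mathcal{S}$ being exactly what guarantees the admissible angle conditions at the points $(\pm L,0)$ where $\Sigma(v)$ meets $\partial\Omega(v)$, and the global $H^1$-regularity of the two combinations is deduced by differentiating the transmission conditions along the interface and applying the gluing lemma for piecewise-$H^1$ functions with matching traces (Lemma~\ref{L0} in the paper). The only difference is presentational: the paper makes the angle verification explicit ($\omega_{\pm,1}+\omega_{\pm,2}=\pi$ with $\omega_{\pm,2}\gtrless\pi/2$ according to the sign of $\llbracket\sigma\rrbracket$), whereas you flag it as the crux without writing it out.
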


Besides \cite{Lem77}, the proof of Proposition~\ref{prz1} requires the following auxiliary result.

\begin{lemma}\label{L0}
Let $v\in\bar{\mathcal{S}}$ and consider $\phi\in L_2(\Omega(v))$ such that
\begin{equation*}
	\phi_i := \phi|_{\Omega_i(v)} \in H^1(\Omega_i(v))\,, \quad i=1,2\,,
\end{equation*}
and $\llbracket \phi\rrbracket=0$ on $\Sigma(v)$. Then $\phi\in H^1(\Omega(v))$ and
\begin{equation}
	\|\phi\|_{H^1(\Omega(v))} \le \|\phi_1\|_{H^1(\Omega_1(v))} + \|\phi_2\|_{H^1(\Omega_2(v))} \,. \label{z2}
\end{equation}
\end{lemma}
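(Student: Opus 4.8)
The plan is to show that the vector field obtained by gluing the gradients of $\phi_1$ and $\phi_2$ is the distributional gradient of $\phi$ on the whole of $\Omega(v)$; the estimate~\eqref{z2} then comes for free. Concretely, I would set
$$
g := (\nabla\phi_1)\,\mathbf{1}_{\Omega_1(v)} + (\nabla\phi_2)\,\mathbf{1}_{\Omega_2(v)}\,,
$$
which belongs to $L_2(\Omega(v),\R^2)$ since $\phi_i\in H^1(\Omega_i(v))$ and $\Omega(v)=\Omega_1(v)\cup\Omega_2(v)\cup\Sigma(v)$ with $\Sigma(v)$ of Lebesgue measure zero. The core claim is that $\nabla\phi=g$ in $\mathcal{D}'(\Omega(v))$. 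Granting it, one has $\phi\in H^1(\Omega(v))$ with $\|\phi\|_{H^1(\Omega(v))}^2=\|\phi_1\|_{H^1(\Omega_1(v))}^2+\|\phi_2\|_{H^1(\Omega_2(v))}^2$, and \eqref{z2} follows at once from the elementary inequality $\sqrt{a^2+b^2}\le a+b$, valid for $a,b\ge 0$.

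To establish the claim I would fix a direction $j$ (the $x$- or the $z$-direction) and a test function $\zeta\in C_c^\infty(\Omega(v))$, and split
$$
\int_{\Omega(v)}\phi\,\partial_j\zeta\,\rd(x,z) = \int_{\Omega_1(v)}\phi_1\,\partial_j\zeta\,\rd(x,z) + \int_{\Omega_2(v)}\phi_2\,\partial_j\zeta\,\rd(x,z)\,.
$$
Since $v\in H^2(D)\hookrightarrow C^1(\bar D)$, the interface $\Sigma(v)$ is a $C^1$-graph, and Green's formula applied on each subdomain produces a boundary contribution which, because $\zeta$ has compact support in $\Omega(v)$, is carried solely by $\Sigma(v)$. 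Writing $\gamma_i\phi_i$ for the trace of $\phi_i$ on $\Sigma(v)$, and recalling that the outward unit normal of $\Omega_1(v)$ along $\Sigma(v)$ is $\mathbf{n}_{\Sigma(v)}$ while that of $\Omega_2(v)$ is $-\mathbf{n}_{\Sigma(v)}$, the two boundary integrals combine into
$$
\int_{\Sigma(v)} (\gamma_1\phi_1 - \gamma_2\phi_2)\,\zeta\,(\mathbf{n}_{\Sigma(v)})_j\,\rd\mathcal{H}^1 = \int_{\Sigma(v)} \llbracket\phi\rrbracket\,\zeta\,(\mathbf{n}_{\Sigma(v)})_j\,\rd\mathcal{H}^1 = 0
$$
by the hypothesis $\llbracket\phi\rrbracket=0$ on $\Sigma(v)$. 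Hence $\int_{\Omega(v)}\phi\,\partial_j\zeta\,\rd(x,z)=-\int_{\Omega(v)}g_j\,\zeta\,\rd(x,z)$, which is exactly the identity $\partial_j\phi=g_j$.

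The delicate point — and the step I expect to require the most care — is the validity of Green's formula on $\Omega_1(v)$, which need not be a Lipschitz domain: when $\mathcal{C}(v)\neq\emptyset$ its boundary carries cusp points. The key observation resolving this is that all such cusps lie on $D\times\{-H\}\subset\partial\Omega(v)$, whereas $K:=\mathrm{supp}\,\zeta$ is a compact subset of the open set $\Omega(v)$ and is therefore at a positive distance from $\partial\Omega(v)$, hence from the cusps. I would accordingly fix an open set $U$ with $K\subset U$ and $\bar U$ a compact subset of $\Omega(v)$ disjoint from $D\times\{-H\}$, chosen so that, using the $C^1$-regularity of the graph $z=v(x)$ with $v>-H$ on $\bar U$, each of $U\cap\Omega_1(v)$ and $U\cap\Omega_2(v)$ is a Lipschitz (in fact $C^1$) domain. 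Since $\zeta$ vanishes outside $K\subset U$, all the integrals above may be evaluated over $U\cap\Omega_i(v)$, where the classical Green's formula for $H^1$-functions applies and yields precisely the interface term on $\Sigma(v)\cap U$. This localization is where the compact support of the test functions and the $C^1$-regularity of $\Sigma(v)$ away from the coincidence set together circumvent the lack of global Lipschitz regularity of $\Omega_1(v)$.
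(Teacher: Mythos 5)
Your proof is correct and follows essentially the same route as the paper: integration by parts (Gau\ss' theorem) on each subdomain $\Omega_i(v)$, with the two interface contributions combining into $\int_{\Sigma(v)}\llbracket\phi\rrbracket\,\zeta\,(\mathbf{n}_{\Sigma(v)})_j\,\rd\sigma_{\Sigma(v)}=0$, so that the glued gradient is the distributional gradient of $\phi$. Your explicit localization argument justifying Green's formula away from the cusp points (using the compact support of the test function) is a welcome extra detail that the paper's proof leaves implicit.
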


\begin{proof}
We set $e_x=(1,0)$ and $e_z=(0,1)$. Given $\theta\in C_c^\infty\big(\Omega(v)\big)$ and $j\in \{x,z\}$ we note that
\begin{equation*}
\begin{split}
\int_{\Omega(v)}\phi \partial_j\theta \,\rd (x,z) & = \int_{\Omega(v)} \mathrm{div}(\phi\theta e_j)\, \rd (x,z) - \sum_{i=1}^2 \int_{\Omega_i(v)} \theta \partial_j \phi_i\,\rd (x,z)\\
&=\int_{\Sigma(v)} \llbracket \phi\rrbracket\,\theta e_j\cdot \mathbf{n}_{ \Sigma(v)}\,\rd \sigma_{\Sigma(v)} - \sum_{i=1}^2 \int_{\Omega_i(v)} \theta \partial_j \phi_i\,\rd (x,z)\,,
\end{split}
\end{equation*}
due to Gau\ss' theorem. Thus, since $\llbracket \phi\rrbracket=0$ on $\Sigma(v)$,
$$
\left\vert \int_{\Omega(v)}\phi \partial_j\theta\,\rd (x,z)\right\vert\le\big(\Vert\phi_1\Vert_{H^1(\Omega_1(v))}+\Vert\phi_2\Vert_{H^1(\Omega_2(v))}\big)\,\Vert\theta\Vert_{L_2(\Omega(v))}\,,
$$
for $j=x,z$ and $\theta\in C_c^\infty\big(\Omega(v)\big)$.
Consequently, $\phi\in H^1(\Omega(v))$.
\end{proof}

\begin{proof}[Proof of Proposition~\ref{prz1}]
We check that the transmission problem \eqref{psiS} fits into the framework of \cite{Lem77}. Since $v\in \mathcal{S} \cap W_\infty^2(D)$ and $v(\pm L) = 0$, the boundaries of $\Omega_1(v)$ and $\Omega_2(v)$ are $W_\infty^2$-smooth curvilinear polygons and the interface $\Sigma(v)$ meets the boundary $\partial\Omega(v)$ of $\Omega(v)$ at the vertices $A_\pm := (\pm L,0)$. Moreover, at the vertex $A_\pm$, the measures $\omega_{\pm,1}$ and $\omega_{\pm,2}$ of the angles between $-e_z$ and $(1,\mp \partial_x v(\pm L))$ and between $(1,\mp \partial_x v(\pm L))$ and $e_z$, respectively, satisfy $\omega_{\pm,1}+\omega_{\pm,2} = \pi$, as well as 
\begin{equation*}
	\begin{split}
	\omega_{\pm,2} & \ge \frac{\pi}{2} \;\text{ if }\; \llbracket \sigma \rrbracket < 0\,, \\
	\omega_{\pm,2} & \le \frac{\pi}{2} \;\text{ if }\; \llbracket \sigma \rrbracket > 0\,,
\end{split}
\end{equation*}
by definition of $\mathcal{S}$. According to the analysis performed in \cite{Lem77}, these conditions guarantee that the variational solution $\psi_v$ to \eqref{z0} provided by Lemma~\ref{L1} satisfies $\psi_{v,i} = \psi_v|_{\Omega_i(v)} \in H^2(\Omega_i(v))$ for  $i=1,2$ and solves the transmission problem \eqref{psi} in a strong sense. 

Next, owing to the just established $H^2$-regularity of $\psi_{v,1}$ and $\psi_{v,2}$, we may differentiate with respect to $x$ the transmission condition $\llbracket \psi_v \rrbracket(x,v(x))=0$, $x\in D$, and find that
\begin{equation*}
	\llbracket \partial_x \psi_v + \partial_x v \partial_z \psi_v \rrbracket = 0 \quad\ \text{ on }\ \Sigma(v)\,.
\end{equation*}
The stated $H^1$-regularity of $\partial_x \psi_v + \partial_x v \partial_z \psi_v$ then follows from Lemma~\ref{L0} and the boundedness of $\partial_x v$ and $\partial_x^2 v$. In the same vein, due to \eqref{a1b}, the regularity of $v$, and the identity
\begin{equation*}
	 \frac{\llbracket - \sigma \partial_x v \partial_x \psi_v + \sigma \partial_z \psi_v \rrbracket}{\sqrt{1+(\partial_x v)^2}} = \llbracket \sigma\nabla \psi_v \rrbracket \cdot \mathbf{n}_{ \Sigma(v)} = 0\,,
\end{equation*}
the claimed $H^1$-regularity of $- \sigma \partial_x v \partial_x \psi_v + \sigma \partial_z \psi_v$ is again a consequence of Lemma~\ref{L0} and the boundedness of $\partial_x v$ and $\partial_x^2 v$.
\end{proof}

\subsection{$H^2$-Estimates on $\psi_v$ for $v\in \mathcal{S} \cap W_\infty^2(D)$}\label{sec.h2e}

The $H^2$-regularity of $\psi_v$ being guaranteed by Proposition~\ref{prz1} for $v\in \mathcal{S}\cap W_\infty^2(D)$, the next step is to show that this property extends to any $v\in\bar{\mathcal{S}}$. To this end, we shall now derive quantitative $H^2$-estimates on $\psi_v$, paying special attention to their dependence upon the regularity of $v$. As in \cite{ARMA20}, it turns out to be more convenient to study a non-homogeneous transmission problem with homogeneous Dirichlet boundary conditions instead of \eqref{psiS}. Specifically, for $v\in \mathcal{S}\cap W_\infty^2(D)$, we define
\begin{equation}
\chi=\chi_v:=\psi_v-h_v\in H_{0}^1(\Omega(v))\,, \label{chi0}
\end{equation}
where $\psi_v\in H^1(\Omega(v))$ is the unique solution to \eqref{psiS} provided by Proposition~\ref{prz1}. Since $\psi_{v,i}=\psi_v|_{\Omega_i(v)}$ belongs to $H^2(\Omega_i(v))$ for $i=1, 2$, we readily infer from \eqref{200} and \eqref{chi0} that
\begin{equation}\label{chi}
	\chi_{i} := \chi_{v}|_{\Omega_i(v)}\in H^2(\Omega_i(v))\,, \quad i=1,2\,.
\end{equation}
We omit in the following the dependence of $\chi$ on $v$ for ease of notation. 

According to \eqref{200}, \eqref{201}, and Proposition~\ref{prz1},  $\chi$ solves the transmission problem
\begin{subequations}\label{a2}
	\begin{align}
		\mathrm{div}(\sigma\nabla\chi) & = - \mathrm{div}(\sigma\nabla h_v) \quad\text{in }\ \Omega(v)\,,\label{a2a}\\
		\llbracket \chi \rrbracket = \llbracket \sigma\nabla \chi\rrbracket\cdot {\bf n}_{\Sigma(v)} &=0\quad\text{on }\ \Sigma(v)\,,\label{a2b}\\
		\chi&=0\quad\text{on }\ \partial\Omega(v)\,, \label{a2c}
	\end{align}
\end{subequations}
and it follows from \eqref{200} that it is equivalent to derive $H^2$-estimates on $(\psi_{v,1},\psi_{v,2})$ or $(\chi_1,\chi_2)$. 

For that purpose, we transform \eqref{a2} to a transmission problem on the rectangle $\mathcal{R}:=D\times (0,1+d)$. More precisely, we  introduce the transformation
\begin{equation}\label{t1}
	T_1(x,z):=\left(x,\frac{z+H}{v(x)+H}\right)\,,\qquad (x,z)\in \Omega_1(v)\,,
\end{equation}
mapping $\Omega_1(v)$ onto the rectangle \mbox{$\mathcal{R}_1:=D\times (0,1)$}, and the transformation
\begin{equation}\label{t2}
	T_2(x,z):=\left(x,z-v(x)+1\right)\,,\qquad (x,z)\in \Omega_2(v)\,,
\end{equation}
mapping $\Omega_2(v)$ onto the  rectangle \mbox{$\mathcal{R}_2:=D\times (1,1+d)$}. The interface separating $\mathcal{R}_1$ and $\mathcal{R}_2$ is
$$
\Sigma_0:=D\times \{1\}\,,
$$ 
so that
$$
\mathcal{R}=D\times (0,1+d)=\mathcal{R}_1\cup \mathcal{R}_2\cup \Sigma_0\,.
$$
It is worth pointing out here that $T_1$ is well-defined due to $v\in \mathcal{S}$. Let $(x,\eta)$ denote the new variables in $\mathcal{R}$; that is, $(x,\eta)=T_1(x,z)$ for $(x,z)\in \mathcal{R}_1$ and $(x,\eta)=T_2(x,z)$ for $(x,z)\in \mathcal{R}_2$.
Then, \eqref{chi} implies 
\begin{equation}\label{C0}
	\Phi:=\Phi_1 \mathbf{1}_{\mathcal{R}_1} + \Phi_2 \mathbf{1}_{\mathcal{R}_2}\in H_0^1(\mathcal{R})\,,\qquad
	\Phi_i:= \chi_i\circ (T_i)^{-1}\in H^2(\mathcal{R}_i)\,,\quad i=1,2\,.
\end{equation} 
For further use, we also introduce
$$
\hat\sigma(x,\eta):=\left\{\begin{array}{cl} \dfrac{\sigma_1}{v(x)+H}\,, & (x,\eta)\in \mathcal{R}_1\,,\\
\hphantom{x}\vspace{-3.5mm}\\
\sigma_2  \,, & (x,\eta)\in \mathcal{R}_2\,,
\end{array}\right.
$$
and derive the following fundamental identity for $\Phi$, which provides a connection between some integrals involving products of second-order derivatives of $\Phi$ and is in the spirit of \cite[Lemma~4.3.1.2]{Gr85}, \cite[Lemma~3.4]{ARMA20}, and \cite[Lemme~II.2.2]{Lem77}.

\begin{lemma}\label{L2a}
Given $v\in \mathcal{S} \cap W_\infty^2(D)$, the function $\Phi$ defined in \eqref{C0} satisfies
\begin{align*}
\sum_{i=1}^2 \int_{\mathcal{R}_i} \hat{\sigma} \partial_x^2\Phi_i\, \partial_\eta^2\Phi_i\, \rd (x,\eta) & =  \sum_{i=1}^2 \int_{\mathcal{R}_i} \hat{\sigma} |\partial_{x}\partial_{\eta}\Phi_i|^2\,\rd (x,\eta) \\
& \qquad - \sigma_1 \int_{\mathcal{R}_1} \frac{\partial_x v}{(v+H)^2} \partial_\eta\Phi_1 \partial_{x}\partial_{\eta}\Phi_1\, \rd (x,\eta)\\
& \qquad +\frac{1}{2}\int_D\frac{\partial_x^2 v \big((\partial_x v)^2-1\big)}{(1+(\partial_x v)^2)^2} \, \left\llbracket\sigma(\partial_x\Phi )^2\right\rrbracket (x,1)\,\rd x\,.
\end{align*}
\end{lemma}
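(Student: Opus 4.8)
The plan is to establish this identity by a direct computation on the rectangles $\mathcal{R}_1$ and $\mathcal{R}_2$, where $\Phi_1$ and $\Phi_2$ are genuinely $H^2$-functions by \eqref{C0}, and then to organize the boundary terms produced by integration by parts so that the contributions on the outer boundary of $\mathcal{R}$ vanish (thanks to $\Phi\in H_0^1(\mathcal{R})$) while the contributions on the interface $\Sigma_0 = D\times\{1\}$ combine, through the transmission conditions, into the single interface integral appearing on the right-hand side. The key starting point is the elementary pointwise identity
\begin{equation*}
	\partial_x^2\Phi_i\,\partial_\eta^2\Phi_i = |\partial_x\partial_\eta\Phi_i|^2 + \partial_x\big(\partial_x\Phi_i\,\partial_\eta^2\Phi_i\big) - \partial_\eta\big(\partial_x\Phi_i\,\partial_x\partial_\eta\Phi_i\big)\,,
\end{equation*}
which is the two-dimensional analogue of \cite[Lemma~4.3.1.2]{Gr85}. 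Multiplying by $\hat\sigma$ (which on $\mathcal{R}_1$ depends on $x$ through $v(x)+H$, and is the constant $\sigma_2$ on $\mathcal{R}_2$) and integrating over each $\mathcal{R}_i$, I would apply Gau\ss' theorem to the two divergence-type terms. The $x$-dependence of $\hat\sigma$ on $\mathcal{R}_1$ is precisely what produces the extra volume term
\begin{equation*}
	-\sigma_1\int_{\mathcal{R}_1}\frac{\partial_x v}{(v+H)^2}\,\partial_\eta\Phi_1\,\partial_x\partial_\eta\Phi_1\,\rd(x,\eta)\,,
\end{equation*}
since differentiating $\hat\sigma_1 = \sigma_1/(v+H)$ in $x$ yields $-\sigma_1\partial_x v/(v+H)^2$.

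Next I would reduce the boundary integrals. On the lateral sides $x=\pm L$ and on the top and bottom $\eta=0,1+d$, the normal is either $\pm e_x$ or $\pm e_\eta$; here $\Phi$ vanishes (so its tangential derivatives along these edges vanish as well), which kills the boundary contributions on $\partial\mathcal{R}\setminus\Sigma_0$. The surviving terms live on $\Sigma_0$, with outward normals $+e_\eta$ for $\mathcal{R}_1$ and $-e_\eta$ for $\mathcal{R}_2$. The interface integrand then assembles into expressions involving $\llbracket\hat\sigma\,\partial_x\Phi\,\partial_\eta^2\Phi\rrbracket$ and $\llbracket\hat\sigma\,\partial_x\Phi\,\partial_x\partial_\eta\Phi\rrbracket$ evaluated on $\Sigma_0$; these must be manipulated using the transformed transmission conditions coming from \eqref{a2b}. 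The crucial inputs are that $\llbracket\chi\rrbracket=0$ on $\Sigma(v)$ transports to continuity of $\Phi$ across $\Sigma_0$, whence $\llbracket\partial_x\Phi\rrbracket=0$ there, and that the flux condition $\llbracket\sigma\nabla\chi\rrbracket\cdot\mathbf{n}_{\Sigma(v)}=0$ transports to a corresponding jump relation linking $\partial_\eta\Phi_1$ and $\partial_\eta\Phi_2$ on $\Sigma_0$. Differentiating the continuity relation $\llbracket\Phi\rrbracket(x,1)=0$ tangentially in $x$ gives further relations among $\partial_x\Phi$, $\partial_\eta\Phi$, and their mixed derivatives that I expect to be exactly what is needed to collapse the interface integrand into the stated form
\begin{equation*}
	\frac{1}{2}\,\frac{\partial_x^2 v\,\big((\partial_x v)^2-1\big)}{\big(1+(\partial_x v)^2\big)^2}\,\llbracket\sigma(\partial_x\Phi)^2\rrbracket(x,1)\,.
\end{equation*}

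The main obstacle, and the step requiring the most care, will be the bookkeeping on $\Sigma_0$: I must correctly transport both transmission conditions through the two distinct changes of variables $T_1$ and $T_2$, keeping track of how $\nabla_{(x,z)}$ relates to $\nabla_{(x,\eta)}$ under each map (in particular the Jacobian factors $1/(v+H)$ for $T_1$ versus the unit factor for $T_2$, and the shear term $-\partial_x v\,\partial_\eta$ arising in both), and then verify that the geometric prefactor $\partial_x^2 v\,((\partial_x v)^2-1)/(1+(\partial_x v)^2)^2$ emerges precisely from combining the normal-flux condition with the tangential derivative of the continuity condition. A clean way to control this is to carry out the analogous computation directly on $\Omega_1(v)$ and $\Omega_2(v)$ for $\chi_1,\chi_2$ — where the transmission conditions take their original, geometrically transparent form \eqref{a2b} — and only at the end push the resulting interface integral forward through the graph parametrization $z=v(x)$, so that the arclength element $\sqrt{1+(\partial_x v)^2}$, the normal $\mathbf{n}_{\Sigma(v)}$, and the quantities $\partial_x\Phi=\partial_x\chi+\partial_x v\,\partial_z\chi$ are assembled transparently; I would relegate the purely computational verification of this identity to Appendix~\ref{sec.id}, as the authors announce.
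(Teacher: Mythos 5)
Your overall plan --- integrate by parts on each $\mathcal{R}_i$ separately and reassemble the boundary terms on $\Sigma_0$ via the transmission conditions --- is a legitimate alternative in principle, but it is not the route the paper takes, and as written it leaves the one genuinely delicate point unaddressed. The paper never computes any jump terms on $\Sigma_0$. Instead it packages the two transmission conditions into the statement that the fields $F:=\partial_x\Phi$ and $G:=-\sigma\frac{\partial_x v}{1+(\partial_x v)^2}\partial_x\Phi+\hat\sigma\partial_\eta\Phi$ are globally $H^1(\mathcal{R})$ (continuity of $\Phi$ across $\Sigma_0$ gives $\llbracket F\rrbracket=0$, the flux condition gives $\llbracket G\rrbracket=0$), records that $F$ vanishes on $\eta\in\{0,1+d\}$ while $G+\tau^\pm F=0$ on $x=\pm L$ with $\tau^\pm$ piecewise constant, and then applies the single global identity $\int_{\mathcal{R}}\partial_xF\,\partial_\eta G\,\rd(x,\eta)=\int_{\mathcal{R}}\partial_\eta F\,\partial_x G\,\rd(x,\eta)$ of Lemma~\ref{ID}. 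Expanding that identity and using $\partial_x\Phi_i\,\partial_x\partial_\eta\Phi_i=\tfrac12\partial_\eta\big((\partial_x\Phi_i)^2\big)$ produces both the extra volume term on $\mathcal{R}_1$ and the interface integral directly from volume integrals, with no interface bookkeeping at all.

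The gap in your version is the justification of Gau\ss' theorem. For $\Phi_i$ only in $H^2(\mathcal{R}_i)$, the two divergence-type terms in your pointwise identity are not individually of divergence form with integrable divergence (each contains a third derivative of $\Phi_i$), and the traces you need --- $\partial_\eta^2\Phi(\pm L,\cdot)$ on the lateral sides and $\partial_x\partial_\eta\Phi(\cdot,1)$ on $\Sigma_0$ --- do not exist in $L_2$. You must therefore approximate by smoother functions, and the approximation has to respect simultaneously the homogeneous Dirichlet condition and both transmission conditions, otherwise the boundary terms do not pass to the limit. That density statement is precisely the content of Appendix~\ref{sec.id} (Lemmas~\ref{app10} and~\ref{lem.d}); it is the functional-analytic heart of the proof, not a ``purely computational verification,'' and it is nontrivial exactly because the lateral relation $G+\tau^\pm F=0$ has a jump at the corner points $(\pm L,1)$ where $\Sigma_0$ meets $\partial\mathcal{R}$ --- this is why the paper cannot simply quote \cite[Lemma~4.3.1.2]{Gr85}, cf.\ Remark~\ref{rem.gr}. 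A smaller bookkeeping slip: integrating your first divergence term by parts in $x$ produces the volume term $+\sigma_1\int_{\mathcal{R}_1}\frac{\partial_x v}{(v+H)^2}\partial_x\Phi_1\,\partial_\eta^2\Phi_1\,\rd(x,\eta)$, not the stated $-\sigma_1\int_{\mathcal{R}_1}\frac{\partial_x v}{(v+H)^2}\partial_\eta\Phi_1\,\partial_x\partial_\eta\Phi_1\,\rd(x,\eta)$; converting one into the other costs a further integration by parts in $\eta$ and yet another term on $\Sigma_0$ that your interface computation would have to absorb.
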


\begin{proof}
We adapt the proof of \cite[Lemma~3.4]{ARMA20} and \cite[Lemme~II.2.2]{Lem77}. Note that \eqref{a2b}, \eqref{t1}, \eqref{t2}, and \eqref{C0} imply $\llbracket \Phi\rrbracket=0$ on $\Sigma_0$, so that 
\begin{equation}\label{i}
 \llbracket \partial_x\Phi\rrbracket=0\quad\text{ on }\ \Sigma_0\,.
\end{equation}
Consequently, since $(\partial_x\Phi_1,\partial_x\Phi_2)$ lies in $H^1(\mathcal{R}_1)\times H^1(\mathcal{R}_2)$ by \eqref{C0}, we may argue as in the proof of Lemma~\ref{L0} and deduce from \eqref{i} that
$$
F:=\partial_x\Phi \in H^1(\mathcal{R})\,.
$$
Moreover, by \eqref{C0},
\begin{equation}
	F(x,0) = F(x,1+d) = 0\,, \qquad x\in D\,. \label{z4}
\end{equation}
Similarly, setting
$$
G:=-\sigma\frac{\partial_x v}{1+(\partial_x v)^2}\partial_x\Phi+\hat\sigma\partial_\eta\Phi
$$
we derive from \eqref{C0} that $G_i := G|_{\mathcal{R}_i}\in H^1(\mathcal{R}_i)$ for $i=1,2$, while \eqref{a2b}, \eqref{t1}, \eqref{t2}, and \eqref{C0} imply that, for $x\in D$,
\begin{align*}
G_1(x,1) & = \frac{\sigma_1}{\sqrt{1+(\partial_x v(x))^2}} \left[ - \partial_x v(x) \partial_x \chi_1(x,v(x)) + \partial_z \chi_1(x,v(x)) \right] \\
& = \frac{\sigma_2}{\sqrt{1+(\partial_x v(x))^2}} \left[ - \partial_x v(x) \partial_x \chi_2(x,v(x)) + \partial_z \chi_2(x,v(x)) \right] = G_2(x,1)\,;
\end{align*}	
that is, $\llbracket G\rrbracket=0$ on~$\Sigma_0$, and we argue as in the proof of Lemma~\ref{L0} to conclude that
$$
G \in H^1(\mathcal{R})\,.
$$
In addition, by \eqref{C0},
\begin{align*}
	G(\pm L, \eta) & = - \sigma (\pm L,\eta) \left( \frac{\partial_x v}{1 + (\partial_x v)^2} \right)(\pm L) \partial_x \Phi(\pm L,\eta) + \hat{\sigma}(\pm L,\eta) \partial_\eta \Phi(\pm L,\eta) \\
	& = - \sigma (\pm L,\eta) \left( \frac{\partial_x v}{1 + (\partial_x v)^2} \right)(\pm L) \partial_x \Phi(\pm L,\eta) 
\end{align*}
for $\eta\in (0,1+d)$. Hence,
\begin{equation}
G(\pm L, \eta) + \sigma (\pm L,\eta) \left( \frac{\partial_x v}{1 + (\partial_x v)^2} \right)(\pm L) F(\pm L,\eta)= 0\,, \qquad \eta\in (0,1+d)\,. \label{z5}
\end{equation}
Owing to \eqref{z4}, \eqref{z5}, and the $H^1$-regularity of $F$ and $G$, we are in a position to apply Lemma~\ref{ID} (see Appendix~\ref{sec.id}) with
\begin{equation*}
	(V,W)=(F,G) \;\;\text{ and }\;\; \tau^{\pm} = \sigma \left( \frac{\partial_x v}{1+(\partial_x v)^2} \right)(\pm L)\,, 
\end{equation*}
to obtain the identity
\begin{equation}\label{DI}
\int_\mathcal{R}\partial_x F\partial_\eta G\,\rd (x,\eta)=\int_\mathcal{R}\partial_{\eta}F\partial_x G\,\rd (x,\eta)\,.
\end{equation}
Using the definitions of $F$ and $G$, the identity \eqref{DI} reads
\begin{equation*}
\begin{split}
\sum_{i=1}^2\int_{\mathcal{R}_i} \partial_x^2\Phi_i &\left(-\sigma\frac{\partial_x v}{1+(\partial_x v)^2} \partial_{x}\partial_{\eta}\Phi_i + \hat{\sigma} \partial_\eta^2 \Phi_i \right) \,\rd (x,\eta)\\
& = \sum_{i=1}^2\int_{\mathcal{R}_i}\partial_{x}\partial_{\eta}\Phi_i \left( -\sigma\frac{\partial_x v}{1+(\partial_x v)^2} \partial_{x}^2\Phi_i - \sigma \frac{\partial_x^2 v [1 -(\partial_x v)^2]}{[1+(\partial_x v)^2]^2} \partial_x\Phi_i \right)\, \rd (x,\eta) \\
& \qquad + \sum_{i=1}^2\int_{\mathcal{R}_i}\partial_{x}\partial_{\eta}\Phi_i \Big(  \partial_x\hat\sigma \partial_\eta\Phi_i +\hat\sigma\partial_{x}\partial_{\eta}\Phi_i \Big) \,\rd (x,\eta)\,.
\end{split}
\end{equation*}
Noticing that the first terms on both sides of the above identity are the same and that
$$
\partial_x\Phi_i\partial_{x}\partial_{\eta}\Phi_i=\frac{1}{2} \partial_\eta\big((\partial_x\Phi_i)^2\big)
$$
implies that
\begin{equation*}
\begin{split}
\sum_{i=1}^2\int_{\mathcal{R}_i} \sigma & \frac{\partial_x^2 v \big[(\partial_x v)^2)-1\big]}{[1+(\partial_x v)^2]^2} \partial_x\Phi_i\partial_{x}\partial_{\eta}\Phi_i \,\rd (x,\eta)\\
& =\frac{1}{2}\int_D\frac{\partial_x^2 v \big[(\partial_x v)^2-1\big]}{[1+(\partial_x v)^2]^2} \, \left\llbracket\sigma(\partial_x\Phi )^2\right\rrbracket (x,1)\,\rd x\,,
\end{split}
\end{equation*}
the assertion follows, recalling that $\partial_x \hat{\sigma}=0$ in $\mathcal{R}_2$.
\end{proof}

\begin{remark}\label{rem.gr}
If $\partial_x v(\pm L)=0$, then \eqref{z5} reduces to $G(\pm L,\eta) = 0$ for $\eta\in (0,1+d)$ and the crucial identity~\eqref{DI} used in the proof of Lemma~\ref{L2a} directly follows from \cite[Lemma~4.3.1.2]{Gr85}. For the general case $v\in \mathcal{S}$,  we require the extension given in  Lemma~\ref{ID}.
\end{remark}

We now translate the outcome of Lemma~\ref{L2a} in terms of the solution $\chi$ to \eqref{a2}.

\begin{lemma}\label{L2}
Let $v\in \mathcal{S} \cap W_\infty^2(D)$. The solution $\chi=\psi_v-h_v$ to \eqref{a2} satisfies
\begin{equation*}
\begin{split}
\sum_{i=1}^2 \int_{\Omega_i(v)}\sigma\,\partial_x^2\chi_i\,\partial_z^2\chi_i\,\rd (x,z) & = \sum_{i=1}^2 \int_{\Omega_i(v)}\sigma |\partial_{x}\partial_{z}\chi_i|^2\,\rd (x,z) \\
& \qquad - \frac{\sigma_2}{2} \int_D \partial_x^2 v(x)\, \big( \partial_z\chi_2(x,v(x)+d) \big)^2\, \rd x\\
& \qquad -\frac{1}{2}\int_D\frac{\partial_x^2 v(x)}{1+(\partial_x v(x))^2}\,\left\llbracket \sigma\vert\nabla\chi\vert^2\right\rrbracket \big(x,v(x)\big)\,\rd x\,.
\end{split}
\end{equation*}
\end{lemma}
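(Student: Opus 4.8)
The plan is to translate the abstract identity from Lemma~\ref{L2a}, which lives on the fixed rectangle $\mathcal{R}$ with the flattened unknown $\Phi$, back to the physical domain $\Omega(v)$ with the original unknown $\chi$. The two are related by the change of variables $T_1$, $T_2$ defined in \eqref{t1}--\eqref{t2}, under which $\Phi_i = \chi_i\circ(T_i)^{-1}$. So the entire proof is a careful substitution: I would express each of the three terms appearing in Lemma~\ref{L2a} in terms of $\chi$ and then collect them into the three terms claimed in Lemma~\ref{L2}.

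First I would record the chain-rule relations between the derivatives of $\Phi_i$ and those of $\chi_i$ on each subdomain. On $\mathcal{R}_2$ the map $T_2$ is a shear, $\eta = z - v(x) + 1$, so $\partial_\eta\Phi_2 = \partial_z\chi_2$ and $\partial_x\Phi_2 = \partial_x\chi_2 + \partial_x v\,\partial_z\chi_2$, and one differentiates once more to get the second-order quantities. On $\mathcal{R}_1$ the map $T_1$, $\eta = (z+H)/(v+H)$, is genuinely $x$-dependent, so the relations are bulkier: $\partial_\eta\Phi_1 = (v+H)\partial_z\chi_1$, while $\partial_x\Phi_1$ picks up a term proportional to $\partial_x v/(v+H)$. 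The appearance of $\hat\sigma$, with its factor $1/(v+H)$ on $\mathcal{R}_1$, is exactly designed so that $\hat\sigma\,\rd(x,\eta)$ transforms correctly against the Jacobian of $T_1$, converting $\hat\sigma$-weighted integrals on $\mathcal{R}_1$ into $\sigma_1$-weighted integrals on $\Omega_1(v)$. I expect the left-hand side $\sum_i\int_{\mathcal{R}_i}\hat\sigma\,\partial_x^2\Phi_i\,\partial_\eta^2\Phi_i$ and the first right-hand term $\sum_i\int_{\mathcal{R}_i}\hat\sigma\,|\partial_x\partial_\eta\Phi_i|^2$ to produce, respectively, $\sum_i\int_{\Omega_i(v)}\sigma\,\partial_x^2\chi_i\,\partial_z^2\chi_i$ and $\sum_i\int_{\Omega_i(v)}\sigma\,|\partial_x\partial_z\chi_i|^2$ up to lower-order remainders, and the bookkeeping is that all such remainders must cancel against the second term of Lemma~\ref{L2a} (the $\mathcal{R}_1$-integral with weight $\sigma_1\partial_x v/(v+H)^2$) plus the pieces generated by the shear on $\mathcal{R}_2$.

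The \textbf{main obstacle} will be the boundary term. In Lemma~\ref{L2a} it is a single interface integral over $\Sigma_0=D\times\{1\}$ involving $\llbracket\sigma(\partial_x\Phi)^2\rrbracket(x,1)$ with the weight $\tfrac12\,\partial_x^2 v\,((\partial_x v)^2-1)/(1+(\partial_x v)^2)^2$. In Lemma~\ref{L2} this has split into \emph{two} boundary terms: one interface integral over $\Sigma(v)$ carrying the full jump $\llbracket\sigma|\nabla\chi|^2\rrbracket(x,v(x))$ with weight $-\tfrac12\,\partial_x^2 v/(1+(\partial_x v)^2)$, and a separate contribution on the top boundary $z=v+d$ involving $(\partial_z\chi_2(x,v(x)+d))^2$. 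The top-boundary term arises because the shear $T_2$ produces boundary contributions when one integrates by parts or changes variables near $\eta=1+d$; since $\chi=0$ there, only the normal derivative $\partial_z\chi_2$ survives, and I would track it by using $\partial_x\Phi_2 = \partial_x\chi_2 + \partial_x v\,\partial_z\chi_2$ together with the vanishing of $\chi_2$ (hence of $\partial_x\chi_2$) on $\{z=v+d\}$, which collapses $(\partial_x\Phi_2)^2$ there to $(\partial_x v)^2(\partial_z\chi_2)^2$. The delicate part is then to verify that the two weights match after converting $\partial_x\Phi$ back to the physical gradient on $\Sigma_0$: on each side of the interface $(\partial_x\Phi_i)^2(x,1)$ must be rewritten using the transmission relation $\llbracket\partial_x\Phi\rrbracket=0$ (equation~\eqref{i}) together with the expressions for $\partial_x\Phi_i$ in terms of $\nabla\chi_i$, and the algebraic identity combining the weight $((\partial_x v)^2-1)/(1+(\partial_x v)^2)^2$ with the Jacobian factors must reproduce exactly $-\partial_x^2 v/(1+(\partial_x v)^2)$ against the jump of $\sigma|\nabla\chi|^2$. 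I would carry this computation out separately on $\Omega_1$ and $\Omega_2$, being especially careful with the $1/(v+H)$ factors from $T_1$ at $\eta=1$, and confirm the signs at the end.

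<br>

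Once the three terms are identified and the lower-order remainders are checked to cancel, the identity of Lemma~\ref{L2} follows directly from Lemma~\ref{L2a}. The regularity $\chi_i\in H^2(\Omega_i(v))$ from \eqref{chi} (equivalently $\Phi_i\in H^2(\mathcal{R}_i)$) justifies all the integrations by parts and trace evaluations, and $v\in\mathcal{S}\cap W^2_\infty(D)$ guarantees that $\partial_x v$, $\partial_x^2 v$, and $1/(v+H)$ are bounded, so every integral and change of variables is legitimate.
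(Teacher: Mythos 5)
Your overall route is the paper's: pull the identity of Lemma~\ref{L2a} back to $\Omega(v)$ through $T_1$ and $T_2$, reorganize the lower-order remainders generated by the change of variables (the paper does this by completing squares, which produce exactly $\sum_i\int_{\Omega_i(v)}\sigma|\partial_x\partial_z\chi_i|^2$, and by integrating the exact-derivative leftovers by parts in $x$ and $\eta$), and then convert the interface contribution using the transmission conditions. However, in both of the steps you yourself single out as delicate, the mechanism you propose is not the one that works.

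For the top-boundary term: you assert that $\chi_2=0$ on $\{z=v+d\}$ forces $\partial_x\chi_2=0$ there, whence $(\partial_x\Phi_2)^2=(\partial_x v)^2(\partial_z\chi_2)^2$ on that boundary. What actually vanishes is the \emph{tangential} derivative $\partial_x\chi_2+\partial_x v\,\partial_z\chi_2=\partial_x\Phi_2(\cdot,1+d)$ (this is \eqref{z4}), so $(\partial_x\Phi_2)^2$ is identically zero on the top boundary and cannot generate $-\tfrac{\sigma_2}{2}\int_D\partial_x^2 v\,\big(\partial_z\chi_2(x,v+d)\big)^2\,\rd x$. That term instead comes from the shear remainder $-\sigma_2\,\partial_x^2 v\,\partial_\eta\Phi_2\,\partial_\eta^2\Phi_2=-\tfrac{\sigma_2}{2}\,\partial_x^2 v\,\partial_\eta\big((\partial_\eta\Phi_2)^2\big)$, whose $\eta$-integration by parts leaves $(\partial_\eta\Phi_2)^2=(\partial_z\chi_2)^2$ at $\eta=1+d$ together with a companion at $\eta=1$. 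For the interface term: those companions at $\eta=1$ (from both $\mathcal{R}_1$ and $\mathcal{R}_2$) contribute an extra jump $\llbracket\sigma(\partial_z\chi)^2\rrbracket(x,v(x))$ that your sketch does not account for, and the quantity to be simplified is the combination $\tfrac{(\partial_x v)^2-1}{[1+(\partial_x v)^2]^2}\llbracket\sigma(\partial_x\chi+\partial_x v\,\partial_z\chi)^2\rrbracket-\llbracket\sigma(\partial_z\chi)^2\rrbracket$, not the $\llbracket\sigma(\partial_x\Phi)^2\rrbracket$ term alone. Its reduction to $-\tfrac{1}{1+(\partial_x v)^2}\llbracket\sigma|\nabla\chi|^2\rrbracket$ cannot be achieved from $\llbracket\partial_x\Phi\rrbracket=0$ by itself: the paper adds the vanishing quantity $2\partial_x v\,\llbracket\sigma(\partial_x v\,\partial_x\chi-\partial_z\chi)(\partial_x\chi+\partial_x v\,\partial_z\chi)\rrbracket$, which is zero only because of \emph{both} transmission conditions, namely continuity of the tangential derivative \eqref{z6} \emph{and} continuity of the flux \eqref{z7}. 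Your plan omits the flux condition and the additional $(\partial_z\chi)^2$ jump, so as written the weights will not match; once these two points are repaired, the computation does close as you expect.
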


\begin{proof}
Let us first recall the regularity of $\Phi$ stated in \eqref{C0} which validates the subsequent computations. Using the transformations $T_1$ and $T_2$ introduced in \eqref{t1} and \eqref{t2}, respectively, we obtain
 \begin{align*}
\sum_{i=1}^2 \int_{\Omega_i(v)}\sigma\,&\partial_x^2\chi_i\,\partial_z^2\chi_i\,\rd (x,z)\\
=& \int_{\mathcal{R}_1}\frac{\sigma_1}{v+H} \bigg[\partial_x^2\Phi_1 + \eta \Big( 2\Big(\frac{\partial_x v}{v+H}\Big)^2 - \frac{\partial_x^2 v}{v+H}\Big) \, \partial_\eta\Phi_1 - 2 \eta \frac{\partial_x v}{v+H}\partial_{x}\partial_{\eta}\Phi_1\\
&\hspace{4cm} +\eta^2\Big(\frac{\partial_x v}{v+H}\Big)^2\partial_\eta^2\Phi_1\bigg] \,\partial_\eta^2\Phi_1\,\rd (x,\eta)\\
&+\int_{\mathcal{R}_2}\sigma_2\Big[\partial_x^2\Phi_2-2\partial_xv\partial_{x}\partial_{\eta}\Phi_2-\partial_x^2 v\partial_\eta\Phi_2 +(\partial_x v)^2 \partial_\eta^2\Phi_2 \Big] \, \partial_\eta^2\Phi_2\,\rd (x,\eta)  \\
=& \sum_{i=1}^2 \int_{\mathcal{R}_i} \hat{\sigma}  \partial_x^2\Phi_i \,\partial_\eta^2\Phi_i\,\rd (x,\eta)\\
& +\int_{\mathcal{R}_1}\frac{\sigma_1}{v+H} \bigg[ \eta \Big( 2 \Big(\frac{\partial_x v}{v+H}\Big)^2 - \frac{\partial_x^2 v}{v+H} \Big) \, \partial_\eta\Phi_1 - 2\eta\frac{\partial_x v}{v+H} \partial_{x}\partial_{\eta}\Phi_1\\
&\hspace{4cm} +\eta^2\Big(\frac{\partial_x v}{v+H}\Big)^2\partial_\eta^2\Phi_1\bigg] \,\partial_\eta^2\Phi_1\,\rd (x,\eta)\\
&+\int_{\mathcal{R}_2}\sigma_2\Big[-2\partial_xv\partial_{x}\partial_{\eta}\Phi_2-\partial_x^2 v\partial_\eta\Phi_2 +(\partial_x v)^2 \partial_\eta^2\Phi_2 \Big] \, \partial_\eta^2\Phi_2\,\rd (x,\eta)\,.
\end{align*}
 We use Lemma~\ref{L2a} to express the first integral on the right-hand side and get
\begin{align}
& \sum_{i=1}^2 \int_{\Omega_i(v)}\sigma\, \partial_x^2\chi_i\,\partial_z^2\chi_i\,\rd (x,z) \nonumber \\
& \qquad = \int_{\mathcal{R}_1} \hat{\sigma} |\partial_{x}\partial_{\eta}\Phi_1|^2 \,\rd (x,\eta)  + \int_{\mathcal{R}_2} \hat{\sigma} |\partial_{x}\partial_{\eta}\Phi_2|^2 \,\rd (x,\eta) \nonumber \\
& \qquad\qquad +\int_{\mathcal{R}_1} \frac{\sigma_1}{v+H} \bigg[ -\frac{\partial_x v}{v+H} \partial_{\eta}\Phi_1\partial_{x}\partial_{\eta}\Phi_1 - 2 \eta \frac{\partial_x v}{v+H} \partial_{x}\partial_{\eta}\Phi_1 \partial_\eta^2\Phi_1 \nonumber \\
&\hspace{4cm}   + \eta^2 \Big(\frac{\partial_x v}{v+H}\Big)^2 \big\vert\partial_\eta^2\Phi_1\big\vert^2 + 2\eta \Big(\frac{\partial_x v}{v+H}\Big)^2 \,\partial_\eta\Phi_1 \partial_\eta^2\Phi_1 \nonumber \\
& \hspace{8cm} - \eta \frac{\partial_x^2 v}{v+H}\,\partial_\eta\Phi_1 \partial_\eta^2\Phi_1 \bigg] \, \,\rd (x,\eta) \nonumber \\
& \qquad\qquad +\int_{\mathcal{R}_2}\sigma_2\Big[-2\partial_x v \partial_{x}\partial_{\eta}\Phi_2 \partial_\eta^2\Phi_2-\partial_x^2 v\partial_\eta\Phi_2\partial_\eta^2\Phi_2 +(\partial_x v)^2\big\vert\partial_\eta^2\Phi_2\big\vert^2\Big]\,\rd (x,\eta) \nonumber \\
& \qquad\qquad +\frac{1}{2}\int_D\frac{\partial_x^2 v \big((\partial_x v)^2-1\big)}{(1+(\partial_x v)^2)^2} \, \left\llbracket\sigma(\partial_x\Phi )^2\right\rrbracket (x,1)\,\rd x \label{E1}\,.
\end{align}
We then compute separately the integrals over $\mathcal{R}_i$, $i=1,2$, and begin with the contribution of $\mathcal{R}_1$. We complete the square to get 
\begin{align*}
I_1& := \int_{\mathcal{R}_1} \frac{\sigma_1}{v+H} \bigg[ \vert\partial_{x}\partial_{\eta}\Phi_1\vert^2 -\frac{\partial_x v}{v+H} \partial_{\eta}\Phi_1\partial_{x}\partial_{\eta}\Phi_1 - 2 \eta \frac{\partial_x v}{v+H} \partial_{x}\partial_{\eta}\Phi_1 \partial_\eta^2\Phi_1 \\
&\hspace{3cm}  +\eta^2 \Big(\frac{\partial_x v}{v+H}\Big)^2 \big\vert\partial_\eta^2\Phi_1\big\vert^2 + 2\eta \Big(\frac{\partial_x v}{v+H}\Big)^2 \,\partial_\eta\Phi_1 \partial_\eta^2\Phi_1 \\
& \hspace{8cm} - \eta \frac{\partial_x^2 v}{v+H}\,\partial_\eta\Phi_1 \partial_\eta^2\Phi_1 \bigg] \, \,\rd (x,\eta)  \\
& = \int_{\mathcal{R}_1} \frac{\sigma_1}{v+H}\bigg[ \big| \partial_{x}\partial_{\eta}\Phi_1 \big|^2 + \Big(\frac{\partial_x v}{v+H}\Big)^2 \big| \partial_\eta\Phi_1 \big|^2 + \eta^2\Big(\frac{\partial_x v}{v+H}\Big)^2 \big| \partial_\eta^2\Phi_1\big|^2 \\
& \hspace{4cm} -2\eta\frac{\partial_x v}{v+H} \partial_{x}\partial_{\eta}\Phi_1 \partial_\eta^2\Phi_1 + 2\eta\Big(\frac{\partial_x v}{v+H}\Big)^2\,\partial_\eta\Phi_1 \partial_\eta^2\Phi_1 \\
& \hspace{8cm} - 2 \frac{\partial_x v}{v+H} \partial_{\eta}\Phi_1 \partial_{x}\partial_{\eta}\Phi_1 \bigg]\ \rd (x,\eta)\\
& \qquad + \int_{\mathcal{R}_1} \frac{\sigma_1}{v+H} \bigg[ - \Big(\frac{\partial_x v}{v+H}\Big)^2 \big| \partial_\eta\Phi_1 \big|^2 + \frac{\partial_x v}{v+H} \partial_{\eta}\Phi_1 \partial_{x}\partial_{\eta}\Phi_1 \\
& \qquad \hspace{6cm} - \eta \frac{\partial_x^2 v}{v+H} \partial_\eta\Phi_1 \partial_\eta^2 \Phi_1\bigg]\ \rd (x,\eta) \\
& = \int_{\mathcal{R}_1} \sigma_1 (v+H) \bigg[ \frac{\partial_{x}\partial_{\eta}\Phi_1}{v+H} -\frac{\partial_x v}{(v+H)^2} \partial_{\eta}\Phi_1-\eta \frac{\partial_x v}{(v+H)^2}\, \partial_\eta^2\Phi_1\bigg]^2\,\rd (x,\eta)\\
&\qquad +\int_{\mathcal{R}_1} \sigma_1 \partial_x v\,\bigg[\frac{1}{(v+H)^2}\partial_{\eta}\Phi_1\partial_{x}\partial_{\eta}\Phi_1 -\frac{\partial_x v}{(v+H)^3}\big(\partial_\eta\Phi_1\big)^2\bigg] \,\rd (x,\eta)\\
&\qquad - \int_{\mathcal{R}_1} \sigma_1 \frac{\partial_x^2 v}{(v+H)^2}\,\eta\,\partial_\eta\Phi_1 \partial_\eta^2\Phi_1 \,\rd (x,\eta)\,.
\end{align*}
Thanks to the identities
\begin{equation*}
\frac{1}{(v+H)^2}\partial_\eta\Phi_1\partial_{x}\partial_{\eta}\Phi_1 - \frac{\partial_x v}{(v+H)^3} \big(\partial_\eta\Phi_1\big)^2  = \frac{1}{2} \partial_x \left( \left( \frac{\partial_\eta\Phi_1}{v+H} \right)^2\right)\,,
\end{equation*}
\begin{equation*}
 \partial_\eta\Phi_1\partial_\eta^2\Phi_1=\frac{1}{2} \partial_\eta \big(\partial_\eta\Phi_1\big)^2\,,
\end{equation*}
and the property $\partial_\eta\Phi_1(\pm L,\eta)= 0$ for $\eta\in (0,1)$ stemming from \eqref{C0}, we may perform integration by parts in the last two integrals on the right-hand side of the previous identity and obtain
\begin{align*}
I_1 & =\int_{\mathcal{R}_1} \sigma_1 (v+H)\bigg[\frac{\partial_{x}\partial_{\eta}\Phi_1 }{v+H} -\frac{\partial_x v}{(v+H)^2} \partial_{\eta}\Phi_1-\eta \frac{\partial_x v}{(v+H)^2}\, \partial_\eta^2\Phi_1\bigg]^2\,\rd (x,\eta)\\
&\qquad -\frac{\sigma_1}{2}\int_{D}\frac{\partial_x^2 v}{(v+H)^2}\,\big(\partial_\eta\Phi_1(x,1)\big)^2\,\rd x\,.
\end{align*}
Transforming the above identity back to $\Omega_1(v)$ yields
\begin{equation}\label{R1}
I_1=\int_{\Omega_1(v)}\sigma_1\big\vert\partial_{x}\partial_{z}\chi_1\big\vert^2\,\rd (x,z) - \frac{\sigma_1}{2} \int_{D} \partial_x^2 v(x)\,\big( \partial_z\chi_1 (x,v(x))\big)^2\,\rd x\,.
\end{equation}
Next, arguing in a similar way,
\begin{align*}
I_2  := &\, \sigma_2 \int_{\mathcal{R}_2}\Big[ \big\vert \partial_x\partial_\eta \Phi_2\big|^2 -2\partial_x v \partial_{x}\partial_{\eta}\Phi_2 \partial_\eta^2\Phi_2-\partial_x^2 v\partial_\eta\Phi_2\partial_\eta^2\Phi_2 +(\partial_x v)^2\big\vert\partial_\eta^2\Phi_2\big\vert^2\Big]\,\rd (x,\eta) \\
= &\, \sigma_2 \int_{\mathcal{R}_2} \Big[\big| \partial_{x}\partial_{\eta}\Phi_2 \big|^2 - 2 \partial_x v\partial_{x}\partial_{\eta}\Phi_2 \partial_\eta^2\Phi_2+(\partial_x v)^2\big\vert\partial_\eta^2\Phi_2\big\vert^2\Big]\,\rd (x,\eta) \\
& \qquad - \frac{\sigma_2}{2}\int_{\mathcal{R}_2}\partial_x^2 v \partial_\eta \left( \partial_\eta \Phi_2 \right)^2 \,\rd (x,\eta) \\
= &\, \sigma_2 \int_{\mathcal{R}_2} \Big[\partial_{x}\partial_{\eta}\Phi_2 - \partial_x v \partial_\eta^2\Phi_2\Big]^2\,\rd (x,\eta) - \frac{\sigma_2}{2}\int_{D} \partial_x^2 v(x)\,\big( \partial_\eta\Phi_2 (x,1+d)\big)^2\,\rd x\\
& \qquad +\frac{\sigma_2 }{2}\int_{D} \partial_x^2 v(x)\,\big( \partial_\eta\Phi_2 (x,1)\big)^2\,\rd x\,.
\end{align*}
Transforming this formula back to $\Omega_2(v)$ yields
\begin{equation}\label{R2}
\begin{split}
I_2 &= \sigma_2 \int_{\Omega_2(v)} \big|\partial_{x}\partial_{z}\chi_2\big|^2\,\rd (x,z)
- \frac{\sigma_2}{2} \int_{D} \partial_x^2 v(x)\,\big( \partial_z\chi_2 (x,v(x)+d)\big)^2\,\rd x\\
&\qquad + \frac{\sigma_2}{2}\int_{D} \partial_x^2 v(x)\,\big( \partial_z\chi_2 (x,v(x))\big)^2\,\rd x \,.
\end{split}
\end{equation}
Finally,
\begin{align*}
	& \int_D\frac{\partial_x^2 v \big[(\partial_x v)^2-1\big]}{[1+(\partial_x v)^2]^2} \, \left\llbracket\sigma(\partial_x\Phi )^2\right\rrbracket (x,1)\,\rd x \\
	& \qquad = \int_D\frac{\partial_x^2 v \big[(\partial_x v)^2-1\big]}{[1+(\partial_x v)^2]^2} \, \left\llbracket \sigma(\partial_x\chi + \partial_x v \partial_z \chi )^2\right\rrbracket (x,1)\,\rd x\,,
\end{align*}
and we deduce from \eqref{E1}, \eqref{R1}, \eqref{R2}, and the above identity that
\begin{equation}\label{E2}
\begin{split}
\sum_{i=1}^2 \int_{\Omega_i(v)}\sigma\,&\partial_x^2\chi_i\,\partial_z^2\chi_i\,\rd (x,z)\\
=& \sum_{i=1}^2 \int_{\Omega_i(v)}\sigma\,\big|\partial_{x}\partial_{z}\chi_i\big|^2\,\rd (x,z)
-\frac{\sigma_2}{2}\int_{D} \partial_x^2 v(x)\,\big( \partial_z\chi_2 (x,v(x)+d)\big)^2\,\rd x\\
&-\frac{1}{2}\int_{D} \partial_x^2 v(x)\,\left\llbracket\sigma \big(\partial_z\chi_2\big)^2\right\rrbracket (x,v(x))\,\rd x\\
&+\frac{1}{2}\int_D\frac{\partial_x^2 v \big[(\partial_x v)^2-1\big]}{[1+(\partial_x v)^2]^2} \, \left\llbracket\sigma\big(\partial_x\chi+\partial_x v\partial_z\chi\big)^2\right\rrbracket (x,v(x))\,\rd x\,.
\end{split}
\end{equation}
It remains to simplify the last two integrals on the right-hand side of \eqref{E2}. To this end, we first recall that the regularity of $\chi$ allows us to differentiate with respect to $x$ the transmission condition $\llbracket \chi\rrbracket =0$ on $\Sigma(v)$ to deduce that
\begin{equation}
	\llbracket \partial_x \chi + \partial_x v \partial_z \chi \rrbracket = 0 \;\text{ on }\; \Sigma(v)\,, \label{z6}
\end{equation}
while the second transmission condition in \eqref{a2b} reads
\begin{equation}
	\llbracket \sigma \big( \partial_x v \partial_x \chi - \partial_z \chi \big) \rrbracket = 0 \;\text{ on }\; \Sigma(v)\,. \label{z7}
\end{equation}
In particular, \eqref{z6} and \eqref{z7} imply that, on $\Sigma(v)$,
\begin{align*}
	\llbracket \sigma \big( \partial_x v \partial_x \chi - \partial_z \chi \big) \big( \partial_x \chi + \partial_x v \partial_z \chi \big)  \rrbracket & = \big( \partial_x \chi_1 + \partial_x v \partial_z \chi_1 \big) \llbracket \sigma \big( \partial_x v \partial_x \chi - \partial_z \chi \big) \rrbracket \\
	& \qquad + \sigma_2 \big( \partial_x v \partial_x \chi_2 - \partial_z \chi_2 \big) \llbracket \big( \partial_x \chi + \partial_x v \partial_z \chi \big) \rrbracket \\
	& = 0\,.
\end{align*}
Therefore,
\begin{align*}
	J & := \big[ (\partial_x v)^2-1 \big] \Big\llbracket \sigma \big(\partial_x\chi+\partial_x v\partial_z\chi\big)^2 \Big\rrbracket - [1+(\partial_x v)^2]^2 \Big\llbracket \sigma \big( \partial_z \chi \big)^2 \Big\rrbracket  \\
	& = \big[ (\partial_x v)^2-1 \big] \Big\llbracket \sigma \big(\partial_x\chi+\partial_x v\partial_z\chi\big)^2 \Big\rrbracket - [1+(\partial_x v)^2]^2 \Big\llbracket \sigma \big( \partial_z \chi \big)^2 \Big\rrbracket  \\
	& \qquad - 2\partial_x v \Big\llbracket \sigma \big( \partial_x v \partial_x \chi - \partial_z \chi \big) \big( \partial_x \chi + \partial_x v \partial_z \chi \big)  \Big\rrbracket \\
	& = \Big\llbracket \sigma \big[ (\partial_x v)^2 - 1 - 2(\partial_x v)^2 \big] \big( \partial_x\chi \big)^2 \Big\rrbracket \\
	& \qquad + \Big\llbracket \sigma \big[ 2 \partial_x v \big( (\partial_x v)^2-1 \big) - 2 (\partial_x v)^3 + 2 \partial_x v \big] \partial_x \chi \partial_z \chi \Big\rrbracket \\
	& \qquad + \Big\llbracket \sigma \big[ (\partial_x v)^2 \big( (\partial_x v)^2 - 1\big) + 2 (\partial_x v)^2 - [1+(\partial_x v)^2]^2 \big] \big( \partial_z \chi \big)^2 \Big\rrbracket \\
	& = - \big[ 1 + (\partial_x v)^2 \big] \Big\llbracket \sigma \big( \partial_x \chi \big)^2 + \sigma \big( \partial_z \chi \big)^2 \Big\rrbracket \\
	& = - \big[ 1 + (\partial_x v)^2 \big] \Big\llbracket \sigma |\nabla\chi|^2 \Big\rrbracket\,.
\end{align*}
Hence,
\begin{equation}
\frac{(\partial_x v)^2-1}{[1+(\partial_x v)^2]^2} \Big\llbracket \sigma \big(\partial_x\chi+\partial_x v\partial_z\chi\big)^2 \Big\rrbracket -  \Big\llbracket \sigma \big( \partial_z \chi \big)^2 \Big\rrbracket = - \frac{1}{1+(\partial_x v)^2} \left\llbracket \sigma |\nabla\chi|^2 \right\rrbracket\,. \label{17}
\end{equation}
Consequently, \eqref{E2} and \eqref{17} entail 
\begin{equation*}
\begin{split}
\sum_{i=1}^2 \int_{\Omega_i(v)}\sigma\,\partial_x^2\chi_i\,\partial_z^2\chi_i\,\rd (x,z) & = \sum_{i=1}^2 \int_{\Omega_i(v)}\sigma |\partial_{x}\partial_{z}\chi_i|^2\,\rd (x,z) \\
& \quad - \frac{1}{2}\int_D \sigma_2\partial_x^2 v(x)\,\big(\partial_z\chi_2(x,v(x)+d)\big)^2\,\rd x\\
& \quad -\frac{1}{2}\int_D\frac{\partial_x^2 v(x)}{1+(\partial_x v(x))^2}\,\left\llbracket \sigma |\nabla\chi|^2\right\rrbracket \big(x,v(x)\big)\,\rd x\,,
\end{split}
\end{equation*}
as claimed.
\end{proof}

In order to estimate the boundary and the transmission terms in Lemma~\ref{L2}, we first report the following trace estimates.

\begin{lemma}\label{L3}
Given $\kappa>0$ and  $\alpha\in (0,1/2]$, there  is $c(\alpha,\kappa)>0$ such that, for any $v\in\bar{\mathcal{S}}$ satisfying $\|v\|_{H^2(D)}\le \kappa$ and $\theta\in H^1(\Omega_2(v))$,
\begin{equation*}
\|\theta_2(\cdot,v)\|_{H^\alpha(D)}+ \|\theta_2(\cdot,v+d)\|_{H^\alpha(D)}\le c(\alpha,\kappa)\, \|\theta_2\|_{L_2(\Omega_2(v))}^{(1-2\alpha)/2}\, \|\theta_2\|_{H^1(\Omega_2(v))}^{(2\alpha+1)/2}\,.
\end{equation*}
\end{lemma}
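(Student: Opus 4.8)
```latex
The plan is to prove the trace estimate by flattening the domain $\Omega_2(v)$ to the fixed rectangle $\mathcal{R}_2=D\times(1,1+d)$ via the transformation $T_2$ from \eqref{t2}, establishing the inequality there with a constant that is uniform over $\|v\|_{H^2(D)}\le\kappa$, and then transporting the result back. The key observation is that $T_2(x,z)=(x,z-v(x)+1)$ maps the two relevant boundary portions $\{z=v(x)\}$ and $\{z=v(x)+d\}$ of $\Omega_2(v)$ onto the flat lines $D\times\{1\}$ and $D\times\{1+d\}$, reducing everything to a trace estimate on a flat interval, which is classical.

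\textbf{Step 1: Reduction to the fixed rectangle.} First I would set $\tilde\theta:=\theta_2\circ T_2^{-1}\in H^1(\mathcal{R}_2)$, noting that $T_2$ is a bi-Lipschitz bijection whose Jacobian has determinant $1$. Since $\|v\|_{H^2(D)}\le\kappa$ controls $\|\partial_x v\|_{L_\infty(D)}$ through the embedding $H^1(D)\hookrightarrow C(\bar D)$, the chain rule gives $\|\tilde\theta\|_{L_2(\mathcal{R}_2)}=\|\theta_2\|_{L_2(\Omega_2(v))}$ and a two-sided equivalence $\|\tilde\theta\|_{H^1(\mathcal{R}_2)}\le c(\kappa)\|\theta_2\|_{H^1(\Omega_2(v))}$ with a constant depending only on $\kappa$ (and $L,H,d$). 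Moreover, $\theta_2(x,v(x))=\tilde\theta(x,1)$ and $\theta_2(x,v(x)+d)=\tilde\theta(x,1+d)$, and since the boundary lines are flat and the tangential variable is simply $x$, the $H^\alpha(D)$ norms of these traces coincide with the corresponding trace norms of $\tilde\theta$ on $D\times\{1\}$ and $D\times\{1+d\}$.

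\textbf{Step 2: The interpolated trace inequality on the flat rectangle.} On the fixed rectangle $\mathcal{R}_2$ the trace operator onto either horizontal edge is bounded from $H^s(\mathcal{R}_2)$ into $H^{s-1/2}(D)$ for $s\in(1/2,1]$; taking $s=\alpha+1/2$ with $\alpha\in(0,1/2]$ yields boundedness from $H^{\alpha+1/2}(\mathcal{R}_2)$ into $H^\alpha(D)$. I would then invoke the interpolation inequality $\|\tilde\theta\|_{H^{\alpha+1/2}(\mathcal{R}_2)}\le c(\alpha)\|\tilde\theta\|_{L_2(\mathcal{R}_2)}^{1-(\alpha+1/2)}\|\tilde\theta\|_{H^1(\mathcal{R}_2)}^{\alpha+1/2}$, valid by the standard interpolation $H^{\alpha+1/2}=[L_2,H^1]_{\alpha+1/2}$. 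Writing the exponents as $1-(\alpha+1/2)=(1-2\alpha)/2$ and $\alpha+1/2=(2\alpha+1)/2$ gives exactly the form of the claimed bound, now for $\tilde\theta$ on $\mathcal{R}_2$.

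\textbf{Step 3: Transport back and assemble.} Combining Step 2 with the norm equivalences from Step 1 yields the asserted estimate for $\theta_2$ on $\Omega_2(v)$ with a constant $c(\alpha,\kappa)$ depending only on $\alpha$ and $\kappa$ (through $\|\partial_x v\|_{L_\infty}$), as required. The main point to be careful about is the uniformity of the constant: the fixed rectangle $\mathcal{R}_2$ is independent of $v$, so the trace and interpolation constants there are absolute, and the only $v$-dependence enters through the bi-Lipschitz norm equivalence of Step 1, which is controlled solely by $\|\partial_x v\|_{L_\infty(D)}\le c\kappa$. I do not expect a genuine obstacle here; the only mild subtlety is verifying that the change of variables preserves the fractional Sobolev trace norms up to $\kappa$-dependent constants, which follows because $T_2$ acts as the identity in the tangential variable $x$ and the traces are taken on lines where $T_2$ is affine.
```
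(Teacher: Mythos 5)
Your proposal is correct and follows essentially the same route as the paper's proof: flatten $\Omega_2(v)$ onto the fixed rectangle $\mathcal{R}_2$ via $T_2$, use that $T_2$ is measure-preserving and the identity in $x$ so the $L_2$ norm is conserved and the $H^1$ norm is controlled by $c(\kappa)$ through $\|\partial_x v\|_{L_\infty(D)}$, then apply the interpolation $H^{\alpha+1/2}(\mathcal{R}_2)\doteq[L_2(\mathcal{R}_2),H^1(\mathcal{R}_2)]_{\alpha+1/2}$ followed by the trace theorem from $H^{\alpha+1/2}(\mathcal{R}_2)$ into $H^{\alpha}(D\times\{1\})$. No gaps; the identification of the traces $\theta_2(\cdot,v)=\tilde\theta(\cdot,1)$ and the uniformity of the constants are handled exactly as in the paper.
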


\begin{proof}
Let $\theta\in H^1(\Omega_2(v))$. Using the transformation $T_2$ defined in \eqref{t2} which maps $\Omega_2(v)$ onto the rectangle $\mathcal{R}_2 =D\times (1,1+d)$, we note that $\phi:=\theta\circ T_2^{-1}$ belongs to $H^1(\mathcal{R}_2)$ with
\begin{equation}
\|\phi\|_{L_2(\mathcal{R}_2)} = \|\theta\|_{L_2(\Omega_2(v))} \label{n2}
\end{equation}
and
\begin{equation*}
	\|\nabla\phi \|_{L_2(\mathcal{R}_2)}^2 = \|\partial_x \theta + \partial_x v \partial_z  \theta \|_{L_2(\Omega_2(v))}^2 + \|\partial_z \theta \|_{L_2(\Omega_2(v))}^2\,,
\end{equation*}
so that the continuous embedding of $H^2(D)$ in $W_\infty^1(D)$ and the assumed bound on $v$ readily imply that 
\begin{equation}
\|\phi\|_{H^1(\mathcal{R}_2)} \le c(\kappa) \|\theta\|_{H^1(\Omega_2(v))} \,. \label{n1}
\end{equation}
By complex interpolation, 
$$
[L_2(\mathcal{R}_2 ),H^1(\mathcal{R}_2 )]_{\alpha+1/2}\doteq H^{\alpha+1/2} (\mathcal{R}_2 )\,,
$$
from which we deduce that
$$
\|\phi\|_{H^{\alpha+1/2}(\mathcal{R}_2 )}\le c(\alpha) \|\phi\|_{L_2(\mathcal{R}_2 )}^{(1-2\alpha)/2} \|\phi\|_{H^1(\mathcal{R}_2 )}^{(2\alpha+1)/2}\,.
$$
Since $\alpha>0$, the  trace maps $H^{\alpha+1/2}(\mathcal{R}_2 )$ continuously on $H^{\alpha}(D\times \{1\})$, and we thus infer from \eqref{n2} and \eqref{n1} that
\begin{equation*}
\begin{split} 
\|\theta(\cdot,v)\|_{H^{\alpha}(D)} &=\|\phi(\cdot, 1) \|_{H^{\alpha}(D)}\le c(\alpha) \|\phi\|_{H^{\alpha+1/2}(\mathcal{R}_2)} \\ 
& \le c(\alpha)\|\phi\|_{L_2(\mathcal{R}_2 )}^{(1-2\alpha)/2} \|\phi\|_{H^1(\mathcal{R}_2 )}^{(2\alpha+1)/2}\\
&\le  c(\alpha,\kappa)\|\theta\|_{L_2(\Omega_2(v))}^{(1-2\alpha)/2}\|\theta\|_{H^1(\Omega_2(v))}^{(2\alpha+1)/2}\,.
\end{split}
\end{equation*}
The estimate for $\|\theta(\cdot,v+d)\|_{H^{\alpha}(D)} $ is proved in a similar way.
\end{proof}

Based on Lemma~\ref{L3} we are in a position to estimate the boundary and transmission terms in the identity provided by Lemma~\ref{L2}.

\begin{lemma}\label{L4} 
Let $\zeta\in (3/4,1)$ and $\kappa>0$. There is $c(\zeta,\kappa)>0$ such that, if $v\in \mathcal{S} \cap W_\infty^2(D)$ satisfies $\|v\|_{H^2(D)}\le \kappa$, then the solution $\chi=\chi_v$ to \eqref{a2} satisfies
\begin{equation}\label{a}
	\begin{split}
	& \left| \frac{\sigma_2}{2} \int_D\partial_x^2 v(x) \big(\partial_z\chi_2(x,v(x)+d) \big)^2 \,\rd x \right| \\
	& \hspace{3cm} \le c(\zeta,\kappa) \, \|\partial_z\chi_2\|_{L_2(\Omega_2(v))}^{2(1-\zeta)} \, \|\partial_z\chi_2\|_{H^1(\Omega_2(v))}^{2\zeta}
	\end{split}
\end{equation}
and
\begin{equation}\label{b}
	\begin{split}
	& \left| \frac{1}{2} \int_D \frac{\partial_x^2 v(x)}{1+(\partial_x v(x))^2}\, \left\llbracket \sigma\vert\nabla\chi\vert^2 \right\rrbracket \big(x,v(x)\big)\,\rd x\right| \\
	& \hspace{3cm} \le c(\zeta,\kappa)\, \|\nabla\chi_2\|_{L_2(\Omega_2(v))}^{2(1-\zeta)}\, \|\nabla\chi_2\|_{H^1(\Omega_2(v))}^{2\zeta}\,.
	\end{split}
\end{equation}
\end{lemma}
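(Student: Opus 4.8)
The plan is to set $\alpha := \zeta - \tfrac12$, so that the hypothesis $\zeta\in(3/4,1)$ becomes $\alpha\in(1/4,1/2)$, a range in which two facts are simultaneously available: on the one hand, Lemma~\ref{L3} applies with this $\alpha$; on the other hand, since $\alpha>1/4$, the Sobolev embedding $H^\alpha(D)\hookrightarrow L_4(D)$ holds, with a constant depending only on $\alpha$ as $D$ is fixed. The exponents in \eqref{a} and \eqref{b} are then matched automatically: squaring the interpolation exponents $(1-2\alpha)/2$ and $(2\alpha+1)/2$ furnished by Lemma~\ref{L3} (to account for the quadratic integrands) produces $1-2\alpha=2(1-\zeta)$ and $2\alpha+1=2\zeta$.

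For \eqref{a}, I would first apply the Cauchy--Schwarz inequality in $x$, using $v\in H^2(D)$ to control $\partial_x^2 v$ in $L_2(D)$:
\begin{equation*}
\left| \int_D\partial_x^2 v\, \big(\partial_z\chi_2(\cdot,v+d) \big)^2 \,\rd x \right| \le \|\partial_x^2 v\|_{L_2(D)}\, \big\|\partial_z\chi_2(\cdot,v+d)\big\|_{L_4(D)}^2\,.
\end{equation*}
Since $\chi_2\in H^2(\Omega_2(v))$ by \eqref{chi}, the function $\theta:=\partial_z\chi$ lies in $H^1(\Omega_2(v))$, so that the embedding $H^\alpha(D)\hookrightarrow L_4(D)$ followed by Lemma~\ref{L3} (evaluated on the upper face $z=v+d$) yields
\begin{equation*}
\big\|\partial_z\chi_2(\cdot,v+d)\big\|_{L_4(D)}^2 \le c(\zeta)\,\big\|\partial_z\chi_2(\cdot,v+d)\big\|_{H^\alpha(D)}^2 \le c(\zeta,\kappa)\, \|\partial_z\chi_2\|_{L_2(\Omega_2(v))}^{1-2\alpha}\, \|\partial_z\chi_2\|_{H^1(\Omega_2(v))}^{2\alpha+1}\,,
\end{equation*}
and \eqref{a} follows upon bounding $\|\partial_x^2 v\|_{L_2(D)}\le\kappa$ and recalling $\alpha=\zeta-\tfrac12$.

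For \eqref{b}, the factor $(1+(\partial_x v)^2)^{-1}$ is bounded by $1$, so the only genuine additional difficulty is that the integrand involves the jump $\llbracket\sigma|\nabla\chi|^2\rrbracket$ across $\Sigma(v)$, hence the trace of $\nabla\chi_1$, whereas the right-hand side of \eqref{b} is expressed solely in terms of $\chi_2$. To remove the $\chi_1$-contribution I would exploit the two transmission conditions \eqref{z6} and \eqref{z7}, which on $\Sigma(v)$ read as the linear system
\begin{equation*}
\partial_x\chi_1+\partial_x v\,\partial_z\chi_1 = \partial_x\chi_2+\partial_x v\,\partial_z\chi_2\,, \qquad \sigma_1\big(\partial_x v\,\partial_x\chi_1-\partial_z\chi_1\big) = \sigma_2\big(\partial_x v\,\partial_x\chi_2-\partial_z\chi_2\big)
\end{equation*}
for $(\partial_x\chi_1,\partial_z\chi_1)$ in terms of $(\partial_x\chi_2,\partial_z\chi_2)$. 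Its determinant equals $-\sigma_1\big(1+(\partial_x v)^2\big)\ne 0$, so the system is invertible with coefficients controlled by $\|\partial_x v\|_{L_\infty(D)}\le c\kappa$; consequently $|\nabla\chi_1|^2\le c(\kappa)\,|\nabla\chi_2|^2$ on $\Sigma(v)$, and therefore $\big|\llbracket\sigma|\nabla\chi|^2\rrbracket(\cdot,v)\big|\le c(\kappa)\,|\nabla\chi_2(\cdot,v)|^2$. With this pointwise bound in hand, \eqref{b} is obtained exactly as \eqref{a}: Cauchy--Schwarz in $x$, the embedding $H^\alpha(D)\hookrightarrow L_4(D)$, and Lemma~\ref{L3} applied now to each component $\partial_x\chi$ and $\partial_z\chi$ (both in $H^1(\Omega_2(v))$) on the lower face $z=v$.

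I expect the linear-algebra step eliminating $\nabla\chi_1$ in favour of $\nabla\chi_2$ on the interface to be the only delicate point, the remainder being a routine combination of H\"older's inequality, a Sobolev embedding, and the trace estimate of Lemma~\ref{L3}; care must merely be taken to keep all constants depending on $v$ only through the bound $\|v\|_{H^2(D)}\le\kappa$, which enters via $\|\partial_x^2 v\|_{L_2(D)}$ and $\|\partial_x v\|_{L_\infty(D)}$.
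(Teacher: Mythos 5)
Your proposal is correct and follows essentially the same route as the paper: Cauchy--Schwarz in $x$, the embedding $H^{\zeta-1/2}(D)\hookrightarrow L_4(D)$, and Lemma~\ref{L3} with $\alpha=\zeta-1/2$ for \eqref{a}, then elimination of $\nabla\chi_1$ on $\Sigma(v)$ via the transmission conditions \eqref{z6}--\eqref{z7} for \eqref{b}. The paper writes out the inverted $2\times 2$ system explicitly (obtaining coefficients bounded by $\max\{\sigma_1,\sigma_2\}/\sigma_1$ uniformly in $\partial_x v$), but your determinant computation and the resulting bound $|\nabla\chi_1|\le c(\kappa)|\nabla\chi_2|$ on the interface are the same argument.
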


\begin{proof} 
 To prove \eqref{a}, let us first note that $H^{\zeta-1/2}(D)$ embeds continuously into $L_4(D)$. We use the Cauchy-Schwarz inequality and Lemma~\ref{L3} with $\alpha=\zeta-1/2$ and deduce
\begin{equation*}
\begin{split}
\left| \frac{\sigma_2}{2}\int_D \partial_x^2 v(x) \big(\partial_z\chi_2(x,v(x)+d)\big)^2\, \rd x\right|& \le \frac{\sigma_2}{2} \|\partial_x^2 v\|_{L_2(D)}\, \|\partial_z\chi_2(\cdot,v+d)\|_{L_4(D)}^2\\
& \le c(\kappa)\, \|\partial_z\chi_2(\cdot,v+d)\|_{H^{\zeta-1/2}(D)}^2\\
& \le c(\zeta,\kappa)\, \|\partial_z\chi_2\|_{L_2(\Omega_2(v))}^{2(1-\zeta)}\, \|\partial_z\chi_2\|_{H^1(\Omega_2(v))}^{2\zeta}\,.
\end{split}
\end{equation*}
As for \eqref{b} we obtain analogously 
\begin{align}
&\left| \frac{\sigma_2}{2}\int_D \frac{\partial_x^2 v(x)}{1+(\partial_x v(x))^2}\, \left[ \big(\partial_x\chi_2(x,v(x))\big)^2 + \big(\partial_z\chi_2(x,v(x))\big)^2 \right] \,\rd x\right|\nonumber \\
& \qquad\qquad \le \frac{\sigma_2}{2} \|\partial_x^2 v\|_{L_2(D)} \|\nabla \chi_2(\cdot,v)\|_{L_4(D)}^2 \nonumber \\
 & \qquad\qquad \le c(\zeta,\kappa)\, \|\nabla\chi_2\|_{L_2(\Omega_2(v))}^{2(1-\zeta)}\, \|\nabla\chi_2\|_{H^1(\Omega_2(v))}^{2\zeta} \label{pp}
\end{align}
and
\begin{equation}
\begin{split}
	&\left| \frac{\sigma_1}{2}\int_D \frac{\partial_x^2 v(x)}{1+(\partial_x v(x))^2}\, \left[ \big(\partial_x\chi_1(x,v(x))\big)^2 + \big(\partial_z\chi_1(x,v(x))\big)^2 \right] \,\rd x\right| \\
	& \qquad\qquad \le \frac{\sigma_1}{2} \|\partial_x^2 v\|_{L_2(D)} \|\nabla \chi_1(\cdot,v)\|_{L_4(D)}^2 \,. \label{z8}
\end{split}
\end{equation}
At this point, we use \eqref{z6} and \eqref{z7} to show that 
\begin{align*}
	\partial_x \chi_1 & = \frac{\sigma_1+\sigma_2(\partial_x v)^2}{\sigma_1\big(1+(\partial_x v)^2\big)} \partial_x \chi_2 +\frac{\llbracket\sigma\rrbracket\partial_x v}{\sigma_1\big(1+(\partial_x v)^2\big)}\partial_z\chi_2	\quad\text{on }\ \Sigma(v)\,, \\ 
	\partial_z \chi_1 & = \frac{\llbracket\sigma\rrbracket\partial_x v}{\sigma_1\big(1+(\partial_x v)^2\big)}\partial_x\chi_2 + \frac{\sigma_1+\sigma_2(\partial_x v)^2}{\sigma_1\big(1+(\partial_x v)^2\big)} \partial_z \chi_2	\quad\text{on }\ \Sigma(v)\,. 
\end{align*}
Consequently, 
\begin{align*}
	|\partial_x \chi_1| & \le \frac{\max\{\sigma_1,\sigma_2\}}{\sigma_1} \left( |\partial_x \chi_2| + |\partial_z \chi_2| \right) \quad\text{on }\ \Sigma(v)\,, \\
	|\partial_z \chi_1| & \le \frac{\max\{\sigma_1,\sigma_2\}}{\sigma_1} \left( |\partial_x \chi_2| + |\partial_z \chi_2| \right) \quad\text{on }\ \Sigma(v)\,, 
\end{align*}
so that 
\begin{equation*}
	\|\nabla \chi_1(\cdot,v)\|_{L_4(D)} \le c \|\nabla \chi_2(\cdot,v)\|_{L_4(D)}\,.
\end{equation*}
Owing to \eqref{z8} and the above inequality, we may then argue as in the proof of \eqref{pp} to conclude that 
\begin{align*}
	&\left| \frac{\sigma_1}{2}\int_D \frac{\partial_x^2 v(x)}{1+(\partial_x v(x))^2}\, \left[ \big(\partial_x\chi_1(x,v(x))\big)^2 + \big(\partial_z\chi_1(x,v(x))\big)^2 \right] \,\rd x\right| \\
	& \qquad\qquad \le c(\zeta,\kappa)\, \|\nabla\chi_2\|_{L_2(\Omega_2(v))}^{2(1-\zeta)}\, \|\nabla\chi_2\|_{H^1(\Omega_2(v))}^{2\zeta}\,,
\end{align*}
as claimed in \eqref{b}.
\end{proof}

We now gather the previous findings to deduce the following crucial $H^2$-estimate on the solution $\psi_v$ to \eqref{psiS} for $v\in \mathcal{S}\cap W_\infty^2(D)$, which only depends on the $H^2(D)$-norm of $v$ (but not on its $W_\infty^2(D)$-norm).

\begin{proposition}\label{P2} 
Let $\kappa>0$ and $v\in\mathcal{S}\cap W_\infty^2(D)$ be such that $\|v\|_{H^2(D)}\le \kappa$. There is a constant $c_0(\kappa)>0$ such that the solution $\psi_v$ to \eqref{psiS} satisfies
\begin{subequations}\label{est}
\begin{equation}\label{est1}
\|\chi\|_{H^1(\Omega(v))} + \|\chi_1\|_{H^2(\Omega_1(v))} + \|\chi_2\|_{H^2(\Omega_2(v))}\le c_0(\kappa)\,,
\end{equation}
and
\begin{equation}\label{est2}
\|\psi_v\|_{H^1(\Omega(v))}+\|\psi_{v,1}\|_{H^2(\Omega_1(v))}+\|\psi_{v,2}\|_{H^2(\Omega_2(v))}  \le c_0(\kappa)\,,
\end{equation}
\end{subequations}
recalling that $\chi=\psi_v-h_v$ and $\chi_i = \chi|_{\Omega_i(v)}$, $i=1,2$.
\end{proposition}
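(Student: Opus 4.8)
The plan is to control the full set of second-order derivatives of $\chi$ through the $L_2$-norm of the piecewise Laplacian, using the identity of Lemma~\ref{L2} to trade the cross term $\partial_x^2\chi_i\,\partial_z^2\chi_i$ for $|\partial_x\partial_z\chi_i|^2$, and then absorbing the resulting boundary and interface contributions by means of the subquadratic bounds of Lemma~\ref{L4}. The starting point is the $H^1$-bound: since $\|h_{v,i}\|_{H^2(\Omega_i(v))}\le c(\kappa)$ by~\eqref{202}, the energy inequality~\eqref{z1} from Lemma~\ref{L1} together with $\sigma\ge\min\{\sigma_1,\sigma_2\}>0$ yields $\|\nabla\psi_v\|_{L_2(\Omega(v))}\le c(\kappa)$, hence $\|\nabla\chi\|_{L_2(\Omega(v))}\le c(\kappa)$ because $\chi=\psi_v-h_v$. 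Lemma~\ref{LP} then bounds $\|\chi\|_{L_2(\Omega(v))}$ by $\|\partial_z\chi\|_{L_2(\Omega(v))}$, so that $\|\chi\|_{H^1(\Omega(v))}\le c(\kappa)$.

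Next I would exploit that $\sigma$ is constant on each $\Omega_i(v)$, so that~\eqref{a2a} reduces to $\Delta\chi_i=-\Delta h_{v,i}$ on $\Omega_i(v)$, giving $\|\Delta\chi_i\|_{L_2(\Omega_i(v))}=\|\Delta h_{v,i}\|_{L_2(\Omega_i(v))}\le c(\kappa)$ again by~\eqref{202}. Integrating the pointwise identity $|\Delta\chi_i|^2=|\partial_x^2\chi_i|^2+2\,\partial_x^2\chi_i\,\partial_z^2\chi_i+|\partial_z^2\chi_i|^2$ against $\sigma$, summing over $i$, and eliminating the cross term with Lemma~\ref{L2} gives, writing $\|D^2\chi_i\|_{L_2(\Omega_i(v))}^2$ for the sum of the squared $L_2$-norms of all second-order derivatives of $\chi_i$,
\[
\sum_{i=1}^2\sigma_i\|D^2\chi_i\|_{L_2(\Omega_i(v))}^2=\sum_{i=1}^2\sigma_i\|\Delta\chi_i\|_{L_2(\Omega_i(v))}^2-2B\,,
\]
where $B$ denotes the sum of the two boundary and transmission integrals on the right-hand side of the identity in Lemma~\ref{L2}.

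The decisive step is the absorption of $B$. Adding~\eqref{a} and~\eqref{b} from Lemma~\ref{L4} (with a fixed $\zeta\in(3/4,1)$) bounds $|B|$ by $c(\zeta,\kappa)\,\|\nabla\chi_2\|_{L_2(\Omega_2(v))}^{2(1-\zeta)}\,\|\nabla\chi_2\|_{H^1(\Omega_2(v))}^{2\zeta}$. Since $\|\nabla\chi_2\|_{H^1(\Omega_2(v))}^2\le\|\nabla\chi_2\|_{L_2(\Omega_2(v))}^2+\|D^2\chi_2\|_{L_2(\Omega_2(v))}^2$ and the exponent $2\zeta<2$, Young's inequality gives, for every $\varepsilon>0$,
\[
|B|\le\varepsilon\,\|D^2\chi_2\|_{L_2(\Omega_2(v))}^2+c(\varepsilon,\zeta,\kappa)\big(1+\|\nabla\chi_2\|_{L_2(\Omega_2(v))}^2\big)\,.
\]
Choosing $\varepsilon$ small relative to $\min\{\sigma_1,\sigma_2\}$ absorbs the leading term into $\sum_i\sigma_i\|D^2\chi_i\|^2$ on the left, and the first-paragraph $H^1$-bound ($\|\nabla\chi_2\|_{L_2}\le\|\nabla\chi\|_{L_2}\le c(\kappa)$) together with $\|\Delta\chi_i\|_{L_2}\le c(\kappa)$ controls the remainder. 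This yields $\sum_i\|D^2\chi_i\|_{L_2(\Omega_i(v))}^2\le c(\kappa)$, which combined with the $H^1$-bound proves~\eqref{est1}; then~\eqref{est2} follows from $\psi_{v,i}=\chi_i+h_{v,i}$ and~\eqref{202}.

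I expect the main obstacle to be exactly this absorption: the interface integral in $B$ a priori involves the full trace of $\nabla\chi$ on $\Sigma(v)$, including the $\chi_1$-traces, and must be shown to depend on $\|D^2\chi_2\|_{L_2}$ only \emph{subquadratically}. The subquadratic exponent is precisely what the interpolation trace estimate of Lemma~\ref{L3} supplies, while the transmission-condition bound $\|\nabla\chi_1(\cdot,v)\|_{L_4}\le c\,\|\nabla\chi_2(\cdot,v)\|_{L_4}$ used inside Lemma~\ref{L4} reduces everything to $\chi_2$; the margin $2\zeta<2$ is what makes Young's inequality, and hence the absorption into the second-order term, possible.
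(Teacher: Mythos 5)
Your proposal is correct and follows essentially the same route as the paper's proof: the $L_2$-identity for the piecewise Laplacian, Lemma~\ref{L2} to convert the cross term $\partial_x^2\chi_i\,\partial_z^2\chi_i$ into $|\partial_x\partial_z\chi_i|^2$ plus boundary/interface integrals, Lemma~\ref{L4} (the paper fixes $\zeta=7/8$) combined with Young's inequality to absorb those integrals into the second-order terms, and the energy inequality plus Lemma~\ref{LP} and~\eqref{202} to close the estimate. The only cosmetic differences are your derivation of the gradient bound on $\chi$ via the triangle inequality rather than by testing the weak form of~\eqref{a2} with $\chi$, and keeping $\zeta$ general.
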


\begin{proof}
Let $v\in \mathcal{S} \cap W_\infty^2(D)$ with $\|v\|_{H^2(D)}\le \kappa$. Since $\sigma$ is constant on $\Omega_1(v)$ and on $\Omega_2(v)$, it readily follows  from \eqref{a2a} that
$$
\sum_{i=1}^2 \int_{\Omega_i(v)}\sigma |\Delta\chi_i|^2\, \rd (x,z)= \sum_{i=1}^2 \int_{\Omega_i(v)}\sigma | \Delta h_{v,i}|^2\, \rd (x,z)\,.
$$
Since
\begin{equation*}
	|\Delta\chi_i|^2 = |\partial_x^2\chi_i|^2 + |\partial_z^2\chi_i|^2 + 2 \partial_x^2\chi_i \partial_z^2 \chi_i\,, \qquad i=1,2\,,
\end{equation*}
we infer from Lemma~\ref{L2} and the above two formulas that
\begin{align*}
& \sum_{i=1}^2 \int_{\Omega_i(v)} \sigma \big\{ |\partial_x^2\chi_i|^2+ 2|\partial_{x}\partial_{z}\chi_i|^2+|\partial_z^2\chi_i|^2\big\}\,\rd (x,z)\\
& \qquad = \sum_{i=1}^2 \int_{\Omega_i(v)}\sigma |\Delta h_{v,i}|^2\,\rd (x,z) + 2 \sum_{i=1}^2  \int_{\Omega_i(v)} \sigma \left( |\partial_x\partial_z \chi_i|^2 - \partial_x^2 \chi_i \partial_z^2\chi_i \right)\, \rd (x,z) \\
& \qquad \le \sum_{i=1}^2 \int_{\Omega_i(v)}\sigma |\Delta h_{v,i}|^2\,\rd (x,z) + \sigma_2 \int_D  \partial_x^2 v(x) \,\big(\partial_z\chi_2(x,v(x)+d)\big)^2\,\rd x\\
&\qquad\qquad + \int_D \frac{\partial_x^2 v(x)}{1+(\partial_x v(x))^2}\, \left\llbracket \sigma |\nabla\chi|^2 \right\rrbracket \big(x,v(x)\big)\, \rd x\,.
\end{align*}
Using Lemma~\ref{L4} with $\zeta=7/8$, along with the identity
\begin{equation*}
	\begin{split}
	& \sum_{i=1}^2 \int_{\Omega_i(v)}\sigma\big\{|\partial_x^2\chi_i|^2+ 2|\partial_{x}\partial_{z}\chi_i|^2+|\partial_z^2\chi_i|^2\big\}\,\rd (x,z) \\
	& \hspace{2cm} = \sigma_1 \|\nabla\chi_1\|_{H^1(\Omega_1(v))}^2 + \sigma_2 \|\nabla\chi_2\|_{H^1(\Omega_2(v))}^2\,,
	\end{split}
\end{equation*} 
we further obtain
\begin{align*}
& \sigma_1 \|\nabla\chi_1\|_{H^1(\Omega_1(v))}^2  + \sigma_2 \|\nabla\chi_2\|_{H^1(\Omega_2(v))}^2\\
& \qquad\qquad \le \sum_{i=1}^2 \int_{\Omega_i(v)}\sigma |\Delta h_{v,i}|^2\,\rd (x,z) + c(\kappa)\, \|\nabla\chi_2\|_{L_2(\Omega_2(v))}^{1/4}\, \|\nabla\chi_2\|_{H^1(\Omega_2(v))}^{7/4}\,.
\end{align*}
Hence, thanks to Young's inequality,
\begin{align*}
	& \sigma_1 \|\nabla\chi_1\|_{H^1(\Omega_1(v))}^2  + \sigma_2 \|\nabla\chi_2\|_{H^1(\Omega_2(v))}^2\\
	& \qquad\qquad\le \sum_{i=1}^2 \int_{\Omega_i(v)}\sigma |\Delta h_{v,i}|^2\,\rd (x,z) + \frac{\sigma_2}{2}\, \|\nabla\chi_2\|_{H^1(\Omega_2(v))}^2 + c(\kappa) \|\nabla\chi_2\|_{L_2(\Omega_2(v))}^2\,.
\end{align*}
Recalling that 
\begin{align*}
	\|\nabla \chi_2\|_{L_2(\Omega_2(v))}^2 & \le \frac{1}{\sigma_2} \int_{\Omega(v)} \sigma |\nabla\chi|^2\, \rd(x,z) \le \frac{1}{\sigma_2} \int_{\Omega(v)} \sigma |\nabla h_v|^2\, \rd (x,z) \\
	& \le \frac{\max\{\sigma_1,\sigma_2\}}{\sigma_2} \|\nabla h_v\|_{L_2(\Omega(v))}^2
\end{align*}
by \eqref{a2} and that $\min\{\sigma_1,\sigma_2\}>0$, we conclude that 
\begin{equation}
	\begin{split}
	& \|\nabla\chi_1\|_{H^1(\Omega_1(v))}^2  + \|\nabla\chi_2\|_{H^1(\Omega_2(v))}^2 \\
	& \hspace{2cm} \le c(\kappa) \left( \|\Delta h_{v,1}\|_{L_2(\Omega_1(v))}^2 + \|\Delta h_{v,2}\|_{L_2(\Omega_2(v))}^2 + \|\nabla h_v\|_{L_2(\Omega(v))}^2 \right)\,.
	\end{split} \label{25}
\end{equation}
Owing to the continuous embedding of $H^2(D)$ in $C(\bar{D})$, combining \eqref{25} and Lemma~\ref{LP} leads us to the estimate
\begin{equation*}
	\begin{split}
		& \|\chi\|_{H^1(\Omega(v))} + \|\chi_{1}\|_{H^2(\Omega_1(v))} + \|\chi_{2}\|_{H^2(\Omega_2(v))} \\
		& \hspace{2cm} \le c(\kappa) \big(\|\nabla h_v \|_{L_2(\Omega(v))} + \|\Delta h_{v,1}\|_{L_2(\Omega_1(v))}^2 + \|\Delta h_{v,2}\|_{L_2(\Omega_2(v))}^2 \big)\,.
	\end{split}
\end{equation*}
The bound~\eqref{est1} then readily follows from the assumptions~\eqref{200} and~\eqref{202}. Finally, \eqref{est1}, together with \eqref{200} and \eqref{202}, yields~\eqref{est2}.
\end{proof}

\subsection{$H^2$-Regularity and $H^2$-Estimates on $\psi_v$ for $v\in \bar{\mathcal{S}}$}\label{sec.h2e1}

Finally, we extend Proposition~\ref{prz1} and Proposition~\ref{P2} by showing the $H^2$-regularity of $\psi_v$ and the corresponding $H^2$-estimates for an arbitrary $v\in\bar{\mathcal{S}}$; that is, we drop the additional $W_\infty^2$-regularity of $v$ assumed in the previous sections and also allow for a non-empty coincidence set.
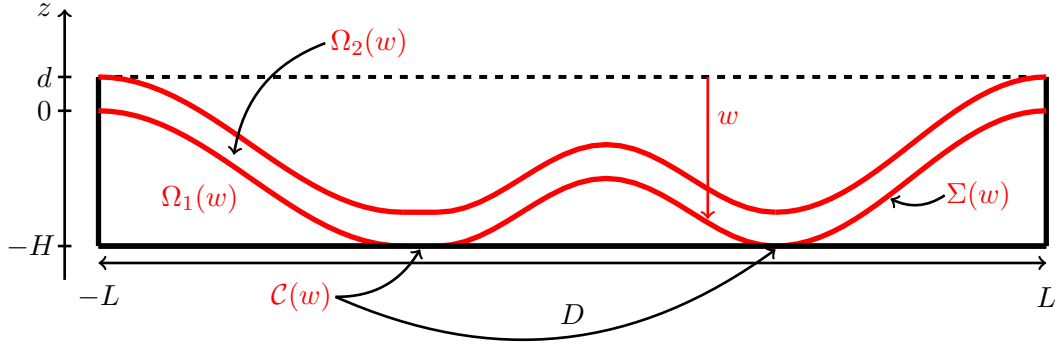
\begin{figure}
	\begin{tikzpicture}[scale=0.9]
		\draw[black, line width = 1.5pt, dashed] (-7,0)--(7,0);
		\draw[black, line width = 2pt] (-7,0)--(-7,-2.5);
		\draw[black, line width = 2pt] (7,-2.5)--(7,0);
		\draw[black, line width = 2pt] (-7,-2.5)--(7,-2.5);
		\draw[red, line width = 2pt] plot[domain=-7:-2.5] (\x,{-1-cos((pi*(11.5+\x)/4.5) r)});
		\draw[red, line width = 2pt] plot[domain=-7:-2.5] (\x,{-1.5-cos((pi*(11.5+\x)/4.5) r)});
		\draw[red, line width = 2pt] (-2.5,-2)--(-2,-2);
		\draw[red, line width = 2pt] plot[domain=-2:3] (\x,{-2.0-0.5*cos((pi*(4+2*\x)/5) r)});
		\draw[red, line width = 2pt] plot[domain=-2:3] (\x,{-1.5-0.5*cos((pi*(4+2*\x)/5) r)});		
		\draw[red, line width = 2pt] plot[domain=3:7] (\x,{-1-cos((pi*(11-\x)/4) r)});
		\draw[red, line width = 2pt] plot[domain=3:7] (\x,{-1.5-cos((pi*(11-\x)/4) r)});
		\draw[red, line width = 1pt, arrows=->] (2,0)--(2,-2.1);
		\node at (2.3,-0.6) {${\color{red} w}$};
		\node at (-5.5,-1.8) {${\color{red} \Omega_1(w)}$};
		\node at (-3,0.5) {${\color{red} \Omega_2(w)}$};
		\draw (-3.65,0.5) edge[->,bend right,line width = 1pt] (-5,-1.15);
		\node at (0,-3.5) {$D$};
		\node at (6,-1.75) {{\color{red} $\Sigma(w)$}};
		\draw (5.5,-1.75) edge[->,bend left, line width = 1pt] (4.7,-1.8);
		\node at (-7.8,1) {$z$};
		\draw[black, line width = 1pt, arrows = ->] (-7.5,-3)--(-7.5,1);
		\node at (-8,-2.5) {$-H$};
		\draw[black, line width = 1pt] (-7.6,-2.5)--(-7.4,-2.5);
		\node at (-7.8,-0.5) {$0$};
		\draw[black, line width = 1pt] (-7.6,-0.5)--(-7.4,-0.5);
		\node at (-7.8,0) {$d$};
		\draw[black, line width = 1pt] (-7.6,0)--(-7.4,0);
		\node at (-7,-3.25) {$-L$};
		\node at (7,-3.25) {$L$};
		\draw[black, line width = 1pt, arrows = <->] (-7,-2.75)--(7,-2.75);
		\node at (-4,-3.25) {${\color{red} \mathcal{C}(w)}$};
		\draw (-3.5,-3.25) edge[->,bend right, line width = 1pt] (3,-2.55);
		\draw (-3.5,-3.25) edge[->,bend right,line width = 1pt] (-2.25,-2.55);
		\draw[black, line width = 2pt] (-7,-2.5)--(7,-2.5);
	\end{tikzpicture}
	\caption{Geometry of $\Omega(w)$ for a state $w\in \mathcal{S}$ with non-empty and disconnected coincidence set.}\label{Fig3}
\end{figure}

\begin{proposition}\label{ACDC}
Let $\kappa>0$ and $v\in\bar{\mathcal{S}}$ be such that $\|v\|_{H^2(D)} \le \kappa$. 
\begin{itemize}
	\item [\textbf{(a)}] The unique minimizer $\psi_v\in \mathcal{A}(v)$ of $\mathcal{J}(v)$ on $\mathcal{A}(v)$ provided by Lemma~\ref{L1} satisfies
	\begin{equation*}
		\psi_{v,i} = \psi_v|_{\Omega_i(v)} \in H^2(\Omega_i(v))\,, \qquad i=1,2\,,
	\end{equation*}
and is a strong solution to the transmission problem \eqref{psiS}. Moreover, there is $c_1(\kappa)>0$ such that 
\begin{equation}\label{king3}
	\|\psi_v\|_{H^1(\Omega(v))} + \|\psi_{v,1}\|_{H^2(\Omega_1(v))} + \|\psi_{v,2}\|_{H^2(\Omega_2(v))}\le c_1(\kappa)\,.
\end{equation}
\item[\textbf{(b)}] Consider a sequence $(v_n)_{n\ge 1}$ in $\bar{\mathcal{S}}$ satisfying
\begin{equation}\label{vh2}
	\|v_n\|_{H^2(D)}\le \kappa\,, \quad n\ge 1\,, \;\;\text{ and }\;\; \lim_{n\to\infty} \|v_n-v\|_{H^1(D)} = 0 \,.
\end{equation}
If $i\in\{1,2\}$ and $U_i$ is an open subset of $\Omega_i(v)$ such that $\bar U_i$ is a compact subset of $\Omega_i(v)$, then
\begin{equation*}
	\psi_{v_n,i}\rightharpoonup \psi_{v,i} \quad\text{in}\quad H^2(U_i)\,,
\end{equation*}
recalling that $\psi_{v_n,i} = \psi_{v_n}|_{\Omega_i(v_n)}$.
\end{itemize}
\end{proposition}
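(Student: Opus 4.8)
The plan is to establish part (a) by approximating an arbitrary $v\in\bar{\mathcal S}$ by smooth functions to which Proposition~\ref{P2} applies, and then to pass to the limit by a compactness argument localized on compact subsets of the limit domain; part (b) will follow from exactly the same localization applied to the given sequence. First I would record the density of $\mathcal S\cap W_\infty^2(D)$ in $\bar{\mathcal S}$ for the $H^1(D)$-topology with control of the $H^2(D)$-norm: given $v\in\bar{\mathcal S}$ with $\|v\|_{H^2(D)}\le\kappa$, I would build $(v_n)_{n\ge1}\subset\mathcal S\cap W_\infty^2(D)$ with $v_n\to v$ in $H^1(D)$ and $\sup_n\|v_n\|_{H^2(D)}\le 2\kappa$. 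A convenient construction starts from $(1-1/n)v$, which already lies in $\mathcal S$ — the strict inequality $(1-1/n)v>-H$ holds because $v\ge-H$, and the endpoint condition $\pm\llbracket\sigma\rrbracket\partial_x v(\pm L)\le0$ is preserved under multiplication by a positive constant — and satisfies $\|(1-1/n)v\|_{H^2(D)}\le\kappa$; mollification then provides the $W_\infty^2$-smoothness, the only delicate point being to restore $v_n(\pm L)=0$ and the sign of $\llbracket\sigma\rrbracket\partial_x v_n(\pm L)$. Since $H^1(D)$-convergence (not $H^2$-convergence) is all that is needed, this can be arranged by subtracting a boundary correction supported in fixed neighborhoods of $\pm L$ whose $H^2$-norm is proportional to the (vanishing) boundary data, the residual room $H/n$ in the constraint guaranteeing $v_n>-H$.

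With such a sequence in hand, Proposition~\ref{P2} yields the uniform bound $\|\psi_{v_n,1}\|_{H^2(\Omega_1(v_n))}+\|\psi_{v_n,2}\|_{H^2(\Omega_2(v_n))}\le c_0(2\kappa)$. The main obstacle is that these estimates are set on the $n$-dependent domains $\Omega_i(v_n)$, so there is no fixed Hilbert space in which to extract a weak limit — all the more as $\Omega_1(v)$ may be disconnected and bounded by cusps. I would overcome this by localization. Let $U_i$ be open with $\bar U_i$ a compact subset of $\Omega_i(v)$; since $v_n\to v$ in $H^1(D)\hookrightarrow C(\bar D)$ uniformly, one has $\bar U_i\subset\Omega_i(v_n)$ for $n$ large, whence $(\psi_{v_n,i})_n$ is bounded in $H^2(U_i)$ and possesses a weakly convergent subsequence there. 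To identify its limit I would invoke the $H^1$-stability of Proposition~\ref{C3} which, combined with \eqref{204}, gives $\psi_{v_n}\to\psi_v$ in $H^1(D\times(-H,M))$ and hence $\psi_{v_n,i}\to\psi_{v,i}$ strongly in $H^1(U_i)$; the weak $H^2(U_i)$-limit of any subsequence must therefore coincide with $\psi_{v,i}$. Uniqueness of the limit upgrades this to weak $H^2(U_i)$-convergence of the whole sequence, $\psi_{v_n,i}\rightharpoonup\psi_{v,i}$.

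To conclude part (a), I would exhaust $\Omega_i(v)$ by an increasing family $(U_i^{(k)})_k$ of such compactly contained open subsets. Weak lower semicontinuity of the $H^2(U_i^{(k)})$-norm together with the uniform bound gives $\|\psi_{v,i}\|_{H^2(U_i^{(k)})}\le c_0(2\kappa)$ for every $k$, and letting $k\to\infty$ by monotone convergence of the integrals of $|\partial^\alpha\psi_{v,i}|^2$ shows $\psi_{v,i}\in H^2(\Omega_i(v))$ with $\|\psi_{v,1}\|_{H^2(\Omega_1(v))}+\|\psi_{v,2}\|_{H^2(\Omega_2(v))}\le c_0(2\kappa)=:c_1(\kappa)$; adding the $H^1$-bound from Lemma~\ref{L1} and \eqref{z1} gives \eqref{king3}. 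That $\psi_v$ is a strong solution of \eqref{psiS} then follows from the gained $H^2$-regularity: testing the minimality of $\psi_v$ against $C_c^\infty(\Omega_i(v))$ shows $\mathrm{div}(\sigma\nabla\psi_v)=0$ a.e. in each $\Omega_i(v)$; the identity $\llbracket\psi_v\rrbracket=0$ on $\Sigma(v)$ expresses $\psi_v\in H^1(\Omega(v))$; the flux condition $\llbracket\sigma\nabla\psi_v\rrbracket\cdot\mathbf{n}_{\Sigma(v)}=0$ results from the weak formulation after an integration by parts that is now licit; and $\psi_v=h_v$ on $\partial\Omega(v)$ encodes $\psi_v\in\mathcal A(v)$.

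Finally, part (b) is immediate from the same localization: given $(v_n)_{n\ge1}$ as in \eqref{vh2}, part (a) applied to each $v_n$ provides the uniform bound $\|\psi_{v_n,i}\|_{H^2(\Omega_i(v_n))}\le c_1(\kappa)$, so the argument of the second paragraph applies verbatim and yields $\psi_{v_n,i}\rightharpoonup\psi_{v,i}$ in $H^2(U_i)$ for every admissible $U_i$. I expect the two genuine difficulties to be, first, the endpoint-preserving density construction and, second and more essentially, the identification of the locally extracted weak limit across the moving domains $\Omega_i(v_n)$: it is precisely here that the global $H^1$-stability of Proposition~\ref{C3} is indispensable, for without it the subsequential weak limits could not be pinned down to $\psi_{v,i}$.
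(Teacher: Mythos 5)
Your proposal is correct and follows essentially the same route as the paper: approximation of $v$ by a sequence in $\mathcal{S}\cap W_\infty^2(D)$ with controlled $H^2(D)$-norm, the uniform bounds of Proposition~\ref{P2}, localization on compactly contained subsets $U_i\subset\Omega_i(v)$ combined with the $H^1$-stability of Proposition~\ref{C3} to identify the weak $H^2(U_i)$-limits, Fatou's lemma (exhaustion) for the global bound, and the standard verification of the strong formulation. The paper packages the localization/identification step as a separate lemma (Lemma~\ref{lez1}) and takes $H^2(D)$-convergence of the approximating sequence, but these are only organizational differences.
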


The proof involves three steps: we first establish Proposition~\ref{ACDC}~\textbf{(b)} under the additional assumption
\begin{equation*}
	\sup_{n\ge 1}\left\{ \|\psi_{v_n,1}\|_{H^2(\Omega_1(v_n))} + \|\psi_{v_n,2}\|_{H^2(\Omega_2(v_n))} \right\} < \infty\,.
\end{equation*}
Building upon this result, we take advantage of the density of $\mathcal{S}\cap W_\infty^2(D)$ in $\bar{\mathcal{S}}$ and of the estimates derived in Proposition~\ref{P2} to verify Proposition~\ref{ACDC}~\textbf{(a)} by a compactness argument. Combining the previous steps leads us finally to a complete proof of Proposition~\ref{ACDC}~\textbf{(b)}. We thus start with the proof of Proposition~\ref{ACDC}~\textbf{(b)} when the solutions $(\psi_{v_n})_{n\ge 1}$ to \eqref{psiS} associated with the sequence $(v_n)_{n\ge 1}$ satisfies the above additional bound. We state this result as a separate lemma for definiteness.

\begin{lemma}\label{lez1}
	Let $\kappa>0$ and $v\in\bar{\mathcal{S}}$ be such that $\|v\|_{H^2(D)} \le \kappa$ and consider a sequence $(v_n)_{n\ge 1}$ in $\bar{\mathcal{S}}$ satisfying \eqref{vh2}. Assume further that, for each $n\ge 1$, $(\psi_{v_n,1},\psi_{v_n,2})$ belongs to $H^2(\Omega_1(v_n))\times H^2(\Omega_2(v_n))$ and that there is $\mu>0$ such that 
	\begin{equation}
		\|\psi_{v_n,1}\|_{H^2(\Omega_1(v_n))} + \|\psi_{v_n,2}\|_{H^2(\Omega_2(v_n))} \le \mu\,, \qquad n\ge 1\,. \label{uh2}
	\end{equation}
Then $\psi_{v,i} \in H^2(\Omega_i(v))$, $i=1,2$. In addition, if $i\in\{1,2\}$ and $U_i$ is an open subset of $\Omega_i(v)$ such that $\bar U_i$ is a compact subset of $\Omega_i(v)$, then
\begin{equation*}
	\psi_{v_n,i}\rightharpoonup \psi_{v,i} \quad\text{in}\quad H^2(U_i)
\end{equation*}
and
\begin{equation}
	\|\psi_{v,1}\|_{H^2(\Omega_1(v))} + \|\psi_{v,2}\|_{H^2(\Omega_2(v))} \le \mu\,. \label{z101}
\end{equation} 
\end{lemma}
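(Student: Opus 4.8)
The plan is to localize the statement onto open sets compactly contained in $\Omega_i(v)$, thereby neutralizing the $n$-dependence of the domains $\Omega_i(v_n)$, and then to exhaust $\Omega_i(v)$ by such sets. The essential geometric input is that, since $H^1(D)$ embeds continuously in $C(\bar{D})$, the convergence in \eqref{vh2} yields $v_n\to v$ uniformly on $\bar{D}$. Consequently, if $U_i$ is an open set such that $\bar{U}_i$ is a compact subset of $\Omega_i(v)$, then, recalling $\Omega_1(v)=\{-H<z<v(x)\}$ and $\Omega_2(v)=\{v(x)<z<v(x)+d\}$, the continuous functions $z+H$ and $v(x)-z$ (for $i=1$), respectively $z-v(x)$ and $v(x)+d-z$ (for $i=2$), are bounded below by a positive constant on the compact set $\bar{U}_i$. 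The uniform convergence of $(v_n)_{n\ge1}$ then guarantees that $\bar{U}_i$ is a compact subset of $\Omega_i(v_n)$ for all $n$ large enough. In particular $\psi_{v_n,i}$ is defined on $U_i$ for large $n$, and \eqref{uh2} gives the uniform bound $\|\psi_{v_n,i}\|_{H^2(U_i)}\le \mu$.

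Next I would identify the limit on $U_i$ using the $H^1$-stability already at hand. By Proposition~\ref{C3}, the zero extensions of $\psi_{v_n}-h_{v_n}$ to $D\times(-H,M)$ converge to that of $\psi_{v}-h_{v}$ in $H^1(D\times(-H,M))$; restricting to $U_i$, where for $n$ large these extensions coincide with the genuine functions $\psi_{v_n}-h_{v_n}$ and $\psi_{v}-h_{v}$, and combining with the convergence $h_{v_n}\to h_{v}$ in $H^1(D\times(-H,M))$ from \eqref{204}, I obtain $\psi_{v_n,i}\to \psi_{v,i}$ in $H^1(U_i)$. Together with the uniform $H^2(U_i)$-bound, a standard reflexivity argument applies: any subsequence of $(\psi_{v_n,i})_{n}$ admits a further subsequence converging weakly in $H^2(U_i)$, and since weak $H^2(U_i)$-convergence implies weak $H^1(U_i)$-convergence, the weak limit must coincide with the strong $H^1(U_i)$-limit $\psi_{v,i}$. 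Hence $\psi_{v,i}\in H^2(U_i)$ and, the weak limit being unique, the full sequence satisfies $\psi_{v_n,i}\rightharpoonup \psi_{v,i}$ in $H^2(U_i)$.

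To conclude, I would exhaust $\Omega_i(v)$ by an increasing sequence of open sets $U_i^{(k)}$ with $\bar{U}_i^{(k)}$ compact in $\Omega_i(v)$ and $U_i^{(k)}\uparrow \Omega_i(v)$ as $k\to\infty$. For fixed $k$, the weak lower semicontinuity of the norm under the weak $H^2(U_i^{(k)})$-convergence, combined with the superadditivity of the lower limit and the bound $U_i^{(k)}\subset\Omega_i(v_n)$ together with \eqref{uh2}, yields
\begin{equation*}
	\|\psi_{v,1}\|_{H^2(U_1^{(k)})} + \|\psi_{v,2}\|_{H^2(U_2^{(k)})} \le \liminf_{n\to\infty}\left( \|\psi_{v_n,1}\|_{H^2(\Omega_1(v_n))} + \|\psi_{v_n,2}\|_{H^2(\Omega_2(v_n))} \right) \le \mu\,.
\end{equation*}
Letting $k\to\infty$ and invoking the monotone convergence of the $H^2(U_i^{(k)})$-norms towards the $H^2(\Omega_i(v))$-norms (which is legitimate since $\psi_{v,i}\in H^1(\Omega_i(v))$ already and its consistent second-order weak derivatives are uniformly $L_2$-bounded over the $U_i^{(k)}$) then gives $\psi_{v,i}\in H^2(\Omega_i(v))$ for $i=1,2$ together with the bound \eqref{z101}.

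The main obstacle is precisely the $n$-dependence of the domains $\Omega_i(v_n)$, which forbids both a direct weak-compactness argument on the full domains and a direct comparison of $\psi_{v_n,i}$ with $\psi_{v,i}$; the localization to compactly contained subsets, made possible by the uniform convergence $v_n\to v$ inherited from the one-dimensional embedding $H^1(D)\hookrightarrow C(\bar D)$, is what circumvents it, since each such subset eventually lies inside every $\Omega_i(v_n)$. A secondary point requiring care is that the $H^1$-stability of Proposition~\ref{C3} is formulated through zero extensions on the fixed cylinder $D\times(-H,M)$, so one must verify that on each $U_i$ these extensions agree with the genuine functions for $n$ large before passing to restrictions.
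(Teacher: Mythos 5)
Your proposal is correct and follows essentially the same route as the paper's own proof: localization to open sets compactly contained in $\Omega_i(v)$ (which lie in $\Omega_i(v_n)$ for large $n$ by the uniform convergence of $(v_n)_{n\ge 1}$), identification of the limit via the $H^1$-stability of Proposition~\ref{C3} together with \eqref{204}, weak compactness in $H^2(U_i)$ with $\psi_{v,i}$ as unique cluster point, and an exhaustion/Fatou-type argument for the global bound \eqref{z101}.
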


The proof of Lemma~\ref{lez1} is very close to that of \cite[Proposition~3.13 \& Corollary~3.14]{ARMA20}. For the sake of completeness we provide a detailed proof in Appendix~\ref{pr.acdc}.

\begin{proof}[Proof of Proposition~\ref{ACDC}~\textbf{(a)}]
Let $v\in \bar{\mathcal{S}}$ be such that $\|v\|_{H^2(D)}\le \kappa$. We may choose a sequence $(v_n)_{n\ge 1}$ in $\mathcal{S}\cap W_\infty^2(D)$ satisfying 
\begin{equation}
v_n\rightarrow v \ \text{ in }\ H^2(D)\,,\qquad \sup_{n\ge 1}\,\|v_n\|_{H^2(D)}\le 2\kappa\,. \label{z1103}
\end{equation}
Owing to \eqref{z1103} and the regularity property $v_n\in \mathcal{S}\cap W_\infty^2(D)$, $n\ge 1$, Proposition~\ref{C3} guarantees that $(\psi_{v_n,1},\psi_{v_n,2})$ belongs to $H^2(\Omega_1(v_n))\times H^2(\Omega_2(v_n))$ and $(\psi_{v_n})_{n\ge 1}$ satisfies \eqref{uh2} with $\mu=c_0(2\kappa)$. We then infer from Lemma~\ref{lez1} that $(\psi_{v,1},\psi_{v,2})$ belongs to $H^2(\Omega_1(v))\times H^2(\Omega_2(v))$ and satisfies
\begin{equation*}
	\|\psi_{v,1}\|_{H^2(\Omega_1(v))} + \|\psi_{v,2}\|_{H^2(\Omega_2(v))} \le c_0(2\kappa)\,. 
\end{equation*}
Combining the above bound with \eqref{204} and Lemma~\ref{lez1} gives \eqref{king3}. It remains to check that $\psi_v$ is a strong solution to \eqref{psiS} which can be  done as in \cite[Corollary~3.14]{ARMA20}, see Appendix~\ref{pr.acdc} for details. 
\end{proof}

\begin{proof}[Proof of Proposition~\ref{ACDC}~\textbf{(b)}]
Proposition~\ref{ACDC}~\textbf{(b)} is now a straightforward consequence of Proposition~\ref{ACDC}~\textbf{(a)} and Lemma~\ref{lez1}.
\end{proof}

\begin{proof}[Proof of Theorem~\ref{Thm1}]
The proof of Theorem~\ref{Thm1} readily follows from  Proposition~\ref{ACDC}~{\bf (a)}.
\end{proof}

We supplement the $H^2$-weak continuity of $\psi_v$ with respect to $v$ reported in Proposition~\ref{ACDC} with the continuity of the traces of $\nabla\psi_{v,2}$ on the upper and lower boundaries of $\Omega_2(v)$.

\begin{proposition}\label{U2}
Let $\kappa>0$ and $v\in\bar{\mathcal{S}}$ be such that $\|v\|_{H^2}(D)\le \kappa$ and consider a sequence $(v_n)_{n\ge 1}$ in~$\bar{\mathcal{S}}$ satisfying \eqref{vh2}. Then, for  $p\in [1,\infty)$,
\begin{align}
\nabla\psi_{v_n,2}(\cdot,v_n) & \rightarrow \nabla\psi_{v,2}(\cdot,v) \quad \text{ in }\quad L_p(D,\R^2)\,, \label{x}\\
\nabla\psi_{v_n,2}(\cdot,v_n+d) & \rightarrow \nabla\psi_{v,2}(\cdot,v+d) \quad \text{ in }\quad L_p(D,\R^2)\,, \label{x1}
\end{align}
and
\begin{equation}\label{xx}
\|\nabla\psi_{v,2}(\cdot,v) \|_{L_p(D,\R^2)} + \|\nabla\psi_{v,2}(\cdot,v+d) \|_{L_p(D,\R^2)} \le c(p,\kappa)\,.
\end{equation}
\end{proposition}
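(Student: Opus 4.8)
The plan is to transport everything to the fixed rectangle $\mathcal{R}_2 = D\times(1,1+d)$ by means of the transformations $T_2$ from \eqref{t2}, and then to combine the uniform $H^2$-bounds of Proposition~\ref{ACDC}~\textbf{(a)} with the $H^1$-stability of Proposition~\ref{C3}. I begin with the uniform bound~\eqref{xx}, which is the soft part. By~\eqref{king3} we have $\psi_{v,2}\in H^2(\Omega_2(v))$ with $\|\psi_{v,2}\|_{H^2(\Omega_2(v))}\le c_1(\kappa)$, so both components of $\nabla\psi_{v,2}$ belong to $H^1(\Omega_2(v))$ with norm bounded by $c_1(\kappa)$. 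Given $p\in[1,\infty)$, I fix $\alpha\in(0,1/2]$ with $H^\alpha(D)\hookrightarrow L_p(D)$, which is possible since $p<\infty$, and apply Lemma~\ref{L3} to $\theta=\partial_x\psi_{v,2}$ and $\theta=\partial_z\psi_{v,2}$; this yields \eqref{xx} at once.

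For the convergences~\eqref{x}--\eqref{x1}, let $T_2^{(n)}(x,z):=(x,z-v_n(x)+1)$ be the transformation attached to $v_n$ and set $\Phi_n:=\psi_{v_n,2}\circ(T_2^{(n)})^{-1}$ and $\Phi:=\psi_{v,2}\circ T_2^{-1}$, all lying in $H^2(\mathcal{R}_2)$. The chain rule gives $\partial_\eta\Phi_n=\partial_z\psi_{v_n,2}$ and $\partial_x\Phi_n=\partial_x\psi_{v_n,2}+\partial_x v_n\,\partial_z\psi_{v_n,2}$ (evaluated at the corresponding heights), with the analogous identities for $\Phi$; the trace at $\eta=1$ corresponds to the lower boundary $z=v_n(x)$ and the trace at $\eta=1+d$ to the upper boundary $z=v_n(x)+d$. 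It therefore suffices to prove that the traces of $\nabla\Phi_n$ on $D\times\{1\}$ and $D\times\{1+d\}$ converge to those of $\nabla\Phi$ in $L_p(D,\R^2)$ and then to pass to the limit in the products, using that $\partial_x v_n\to\partial_x v$ in $C(\bar D)$. This last convergence follows from~\eqref{vh2} and the compact embedding of $H^2(D)$ in $C^1(\bar D)$ in one space dimension.

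Since $\|v_n\|_{H^2(D)}\le\kappa$ and $H^2(D)\hookrightarrow C^1(\bar D)$, the Jacobians of $T_2^{(n)}$ and their first-order derivatives are bounded uniformly in $n$, so~\eqref{king3} gives a uniform bound $\|\Phi_n\|_{H^2(\mathcal{R}_2)}\le c(\kappa)$. To identify the limit, rather than using the interior convergence of Proposition~\ref{ACDC}~\textbf{(b)}, I exploit the global stability: extending $\chi_n:=\psi_{v_n}-h_{v_n}$ by zero, Proposition~\ref{C3} and assumption~\eqref{204} show that $g_n:=\chi_n+h_{v_n}$, which coincides with $\psi_{v_n,2}$ on $\Omega_2(v_n)$, converges in $H^1(D\times(-H,M))$ to $g:=\chi+h_v$ (coinciding with $\psi_{v,2}$ on $\Omega_2(v)$), where $M$ is the quantity in~\eqref{z10}. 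Writing $\Phi_n(x,\eta)=g_n(x,\eta+v_n(x)-1)$ and splitting $\Phi_n-\Phi$ into a term controlled by $\|g_n-g\|_{L_2}$ and a term handled by continuity of translations in $L_2$ (the vertical shifts $v_n-v\to0$ uniformly and have unit Jacobian), I obtain $\Phi_n\to\Phi$ in $L_2(\mathcal{R}_2)$.

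Interpolating this $L_2$-convergence against the uniform bound, namely $\|\Phi_n-\Phi\|_{H^s(\mathcal{R}_2)}\le c\,\|\Phi_n-\Phi\|_{L_2(\mathcal{R}_2)}^{1-s/2}\,\|\Phi_n-\Phi\|_{H^2(\mathcal{R}_2)}^{s/2}$, gives $\Phi_n\to\Phi$ in $H^s(\mathcal{R}_2)$ for every $s<2$. Choosing $s\in(\max\{3/2,\,2-1/p\},2)$, the trace theorem applied to $\nabla\Phi_n\to\nabla\Phi$ in $H^{s-1}(\mathcal{R}_2)$ yields convergence in $H^{s-3/2}(D)\hookrightarrow L_p(D)$ of the traces of $\nabla\Phi_n$ on $D\times\{1\}$ and $D\times\{1+d\}$. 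Transforming back through the chain-rule identities and using $\partial_x v_n\to\partial_x v$ in $C(\bar D)$ to pass to the limit in the products $\partial_x v_n\,\partial_\eta\Phi_n$ then produces~\eqref{x} and~\eqref{x1}. The main obstacle is precisely the passage from the interior $H^2$-information to convergence of boundary traces across a \emph{varying} domain; it is resolved by the interpolation step, which upgrades the soft $L_2$-convergence on the fixed rectangle (coming from the $H^1$-stability) to strong convergence in $H^s$ just below the trace-critical regularity $s=3/2$.
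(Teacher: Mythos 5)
Your treatment of the uniform bound \eqref{xx} via Lemma~\ref{L3} applied to the components of $\nabla\psi_{v,2}$ is correct, and your overall strategy for \eqref{x}--\eqref{x1} --- transport to the fixed rectangle $\mathcal{R}_2$, combine a uniform second-order bound with the $L_2$-convergence coming from the $H^1$-stability, interpolate, and take traces --- is essentially the strategy of the paper.

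There is, however, a genuine gap at the linchpin of your argument: the claim that $\|\Phi_n\|_{H^2(\mathcal{R}_2)}\le c(\kappa)$, justified by the assertion that ``the Jacobians of $T_2^{(n)}$ and their first-order derivatives are bounded uniformly in $n$.'' This is false: the first-order derivatives of the Jacobian of $T_2^{(n)}$ involve $\partial_x^2 v_n$, which for $v_n\in\bar{\mathcal{S}}$ is controlled only in $L_2(D)$, not in $L_\infty(D)$. Concretely, the chain rule gives
\begin{equation*}
\partial_x^2\Phi_n = \Big(\partial_x^2\psi_{v_n,2} + 2\partial_x v_n\,\partial_x\partial_z\psi_{v_n,2} + (\partial_x v_n)^2\partial_z^2\psi_{v_n,2} + \partial_x^2 v_n\,\partial_z\psi_{v_n,2}\Big)\circ \big(T_2^{(n)}\big)^{-1}\,,
\end{equation*}
and the last term is the product of an $L_2(D)$-function with an $H^1$-function; H\"older combined with the embedding $H^1(\mathcal{R}_2)\hookrightarrow L_{2q/(2-q)}(\mathcal{R}_2)$ only places it in $L_q(\mathcal{R}_2)$ for $q<2$. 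This is precisely why the paper establishes $\|\phi_n\|_{W_q^2(\mathcal{R}_2)}\le c(q,\kappa)$ for $q\in(1,\min\{2,p\})$ rather than an $H^2$-bound, and then runs the weak-convergence/trace argument in the $W_q^2$-scale: the trace of $\nabla\phi_n$ lands in $W_q^{1-1/q}(D)$, which still embeds compactly into $L_p(D)$ for a suitable choice of $q$ and $s$, and this suffices for every finite $p$. Your uniform $H^2$-claim can in fact be rescued --- since $x\mapsto\|\partial_\eta\Phi_n(x,\cdot)\|_{L_2(1,1+d)}$ belongs to $H^1(D)\hookrightarrow L_\infty(D)$, the product $\partial_x^2 v_n\,\partial_\eta\Phi_n$ does lie in $L_2(\mathcal{R}_2)$ with a bound depending only on $\kappa$ --- but this anisotropic estimate is an additional argument that your proof neither states nor hints at; as written, the step fails. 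Once the uniform bound is placed in the correct space (either $W_q^2$ with $q<2$, or $H^2$ via the mixed-norm estimate), the remaining steps --- identification of the limit through the global $H^1$-stability and continuity of translations, interpolation, traces, and the chain-rule bookkeeping using $\partial_x v_n\to\partial_x v$ in $C(\bar D)$ --- are sound.
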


\begin{proof}
Recall first from \eqref{king3} that
\begin{equation}\label{king3a}
\|\psi_{v_n,2}\|_{H^2(\Omega_2(v_n))}   \le c_1(\kappa)\,,\qquad n\ge 1\,.
\end{equation}
As in the proof of Lemma~\ref{L3} we map $\Omega_2(v)$ onto the rectangle $\mathcal{R}_2 =D\times (1,1+d)$ and define, for $(x,\eta)\in \mathcal{R}_2$ and $n\ge 1$,
 \begin{equation*}
 	\phi_n(x,\eta)  := \psi_{v_n,2}(x,\eta+v_n(x)-1)\,,\qquad \phi(x,\eta) := \psi_{v,2}(x,\eta+v(x)-1)\,.
 \end{equation*}
Let $q\in (1,2)$. Since
\begin{align*}
	\nabla\phi_n(x,\eta) & = \Big( \partial_x \psi_{v_n} + \partial_x v_n \partial_z \psi_{v_n} \,,\, \partial_z \psi_{v_n} \Big)(x,\eta+v_n(x)-1)\,, \\
	\partial_x^2 \phi_n(x,\eta) & = \Big( \partial_x^2 \psi_{v_n} + 2\partial_x v_n \partial_x \partial_z \psi_{v_n} + (\partial_x v_n)^2 \partial_z^2 \psi_{v_n} + \partial_x^2 v_n \partial_z \psi_{v_n} \Big)(x,\eta+v_n(x)-1)\,, \\
	\partial_x\partial_\eta\phi_n(x,\eta) & = \Big( \partial_x\partial_z \psi_{v_n} + \partial_x v_n \partial_z^2 \psi_{v_n}\Big)(x,\eta+v_n(x)-1)\,, \\
	\partial_\eta^2 \phi_n(x,\eta) & = \partial_z^2\psi_{v_n}(x,\eta+v_n(x)-1)\,, 
\end{align*}
it follows from \eqref{vh2}, \eqref{king3a}, the continuous embedding of $H^2(D)$ in $C^1(\bar{D})$, and that of $H^1(\mathcal{R}_2)$ in $L^{2q/(2-q)}(\mathcal{R}_2)$ that
\begin{equation}\label{q1}
\phi_n\in W_q^2(\mathcal{R}_2)\;\;\text{ with }\;\; \| \phi_n\|_{W_q^2(\mathcal{R}_2)}\le c(q,\kappa)\,,\qquad n\ge 1\,.
\end{equation}

Now, given $p\in [1,\infty)$, we choose $q\in (1,\min\{2,p\})$ satisfying $1<2/q<1 + 1/p$ and $s\in (2/q-1/p,1)$. Since 
$$
\phi_n \rightharpoonup \phi \quad \text{ in }\quad W_q^2(\mathcal{R}_2)
$$
by \eqref{204},  \eqref{q1}, and Proposition~\ref{ACDC}, the continuity of the trace as a mapping from $W_q^1(\mathcal{R}_2)$ to $W_q^{1-1/q}(D\times\{1\})$ and the compactness of the embedding of $W_q^{1-1/q}(D)$ in $L_p(D)$ imply that 
\begin{equation}
\nabla\phi_n (\cdot,1) \rightarrow \nabla\phi (\cdot,1) \quad \text{ in }\quad W_q^{s-1/q}(D)\label{z107}
\end{equation}
and
\begin{equation}
\|\nabla\phi(\cdot,1)\|_{L_p(D)} \le c(p,\kappa)\,. \label{z108}
\end{equation}
That is,
$$
\partial_z\psi_{v_n,2}(\cdot,v_n)= \partial_\eta\phi_n (\cdot,1) \rightarrow \partial_\eta\phi (\cdot,1)=\partial_z\psi_{v,2}(\cdot,v) \quad \text{ in }\quad L_p(D)
$$
and, recalling \eqref{vh2} and the continuous embedding of $H^2(D)$ in $C^1(\bar{D})$,
\begin{equation*}
\begin{split}
\partial_x\psi_{v_n,2}(\cdot,v_n)= \partial_x \phi_n (\cdot,1)&-\partial_x v_n\partial_\eta\phi_n (\cdot,1)\\
& \rightarrow \partial_x \phi (\cdot,1)-\partial_x v\partial_\eta\phi (\cdot,1)
=\partial_x\psi_{v,2}(\cdot,v) \quad \text{ in }\quad L_p(D)\,.
\end{split}
\end{equation*}
Furthermore, \eqref{z107} and \eqref{z108}, along with the bound $\|v\|_{H^2(D)}\le \kappa$ and the continuous embedding of $H^2(D)$ in $C^1(\bar{D})$, entail that
$$
\|\nabla\psi_{v,2}(\cdot,v)\|_{L_p(D)} \le c(p,\kappa)\,,
$$
which proves \eqref{x} and the first bound in \eqref{xx}. Clearly, \eqref{x1} and the second bound in \eqref{xx} are shown in the same way.
\end{proof}

\begin{proof}[Proof of Theorem~\ref{Thm2}]
The proof of Theorem~\ref{Thm2} is now a consequence of Proposition~\ref{C3} for \eqref{y1}, Proposition~\ref{ACDC}~{\bf (b)} for \eqref{y2}, and Proposition~\ref{U2} for \eqref{y3}.
\end{proof}

\appendix
\section{The Identity \eqref{DI}}\label{sec.id}

This appendix is devoted to the proof of the identity~\eqref{DI}, which can be seen as a variant of \cite[Lemma~4.3.1.2]{Gr85} with piecewise constants linear constraints on the boundaries instead of constant ones.

\begin{lemma}\label{ID}
Let $\mathcal{R} = D \times (0,1+d)$ and consider $(V,W)\in H^1(\mathcal{R},\mathbb{R}^2)$ satisfying
\begin{subequations}\label{app1}
\begin{align}
	V(x,0) = V(x,1+d) & = 0\,, \qquad x\in D=(-L,L)\,, \label{app1a}\\
	W(\pm L, \eta) + \tau^\pm(\eta) V(\pm L,\eta) & = 0\,, \qquad \eta\in (0,1+d)\,, \label{app1b}
\end{align}	
\end{subequations}
where $\tau^{\pm}$ are piecewise constants functions of the form
\begin{equation}
	\tau^\pm = \tau_1^\pm \mathbf{1}_{(0,1)} + \tau_2^\pm \mathbf{1}_{(1,1+d)} \label{app2}
\end{equation}
with $(\tau_1^+,\tau_1^-,\tau_2^+,\tau_2^-)\in \mathbb{R}^4$, featuring possibly a jump discontinuity at $\eta=1$. Then
\begin{equation*}
	\int_\mathcal{R}\partial_x V \partial_\eta W \,\rd (x,\eta) = \int_\mathcal{R} \partial_{\eta}V \partial_x W\,\rd (x,\eta)\,.
\end{equation*}
\end{lemma}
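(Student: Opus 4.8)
The plan is to exploit the antisymmetric, divergence structure of the integrand: for sufficiently regular $V$ and $W$ one has the pointwise identity $\partial_x V\,\partial_\eta W - \partial_\eta V\,\partial_x W = \mathrm{div}\big(V\,\partial_\eta W\,e_x - V\,\partial_x W\,e_\eta\big)$, so the claimed equality amounts to the vanishing of a boundary integral over $\partial\mathcal{R}$. Since $\tau^\pm$ is only piecewise constant by \eqref{app2}, I would run the computation on each of the two strips $\mathcal{R}_1:=D\times(0,1)$ and $\mathcal{R}_2:=D\times(1,1+d)$, separated by $\Sigma_0:=D\times\{1\}$, on which $\tau^\pm$ is constant, and then add the contributions. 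The argument consists of a smooth base case followed by a density step.

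For the base case I would assume $V\in C^\infty(\bar{\mathcal{R}})$ together with $W|_{\mathcal{R}_i}\in C^\infty(\bar{\mathcal{R}}_i)$ and $W\in C(\bar{\mathcal{R}})$ (the continuity across $\Sigma_0$ being automatic for $W\in H^1(\mathcal{R})$). Applying the divergence theorem on each strip, the contributions on the bottom edge $\{\eta=0\}$ and top edge $\{\eta=1+d\}$ vanish because $V=0$ there by \eqref{app1a}. On the lateral edges $\{x=\pm L\}$ I would insert \eqref{app1b}, write $V\,\partial_\eta V=\tfrac12\partial_\eta(V^2)$, and use the constancy of $\tau^\pm$ on each strip to integrate explicitly in $\eta$; the endpoint terms at $\eta\in\{0,1+d\}$ vanish again by \eqref{app1a}, leaving only corner contributions proportional to $(\tau_2^\pm-\tau_1^\pm)\,V(\pm L,1)^2$. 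The terms created along $\Sigma_0$ by the two strips combine into $\int_D V(x,1)\,\partial_x\big[W|_{\mathcal{R}_2}-W|_{\mathcal{R}_1}\big](x,1)\,\rd x$, which is zero since continuity of $W$ across $\Sigma_0$ gives $W|_{\mathcal{R}_1}(\cdot,1)=W|_{\mathcal{R}_2}(\cdot,1)$ and hence equal tangential derivatives. Finally, the corner terms vanish as well: continuity of $W$ at $(\pm L,1)$ combined with \eqref{app1b} forces $-\tau_1^\pm V(\pm L,1)=-\tau_2^\pm V(\pm L,1)$, i.e. $(\tau_2^\pm-\tau_1^\pm)\,V(\pm L,1)=0$. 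This settles the identity for smooth pairs.

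To reach general $(V,W)\in H^1(\mathcal{R},\R^2)$ obeying \eqref{app1a}--\eqref{app1b}, I would observe that both $(V,W)\mapsto\int_\mathcal{R}\partial_x V\,\partial_\eta W\,\rd(x,\eta)$ and $(V,W)\mapsto\int_\mathcal{R}\partial_\eta V\,\partial_x W\,\rd(x,\eta)$ are continuous bilinear forms on $H^1(\mathcal{R})\times H^1(\mathcal{R})$, so it suffices to approximate $(V,W)$ in $H^1\times H^1$ by smooth-on-strips pairs still satisfying \eqref{app1a}--\eqref{app1b}. I expect this approximation to be the main obstacle, precisely because \eqref{app1b} couples the lateral traces of $V$ and $W$ through the discontinuous multiplier $\tau^\pm$. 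The resolving observation is that \eqref{app1b} already constrains the limiting traces: since $-W(\pm L,\cdot)=\tau^\pm V(\pm L,\cdot)\in H^{1/2}(0,1+d)$ while $V(\pm L,\cdot)\in H^{1/2}(0,1+d)$ and $\tau^\pm$ jumps at $\eta=1$, the trace $V(\pm L,\cdot)$ must belong to $H_{00}^{1/2}$ near $\eta=1$ (a Hardy-type vanishing at the corners $(\pm L,1)$). This is exactly the regularity needed to construct $V_k\to V$ smooth with $V_k(\pm L,1)=0$, after which $-\tau^\pm V_k(\pm L,\cdot)$ is continuous and piecewise smooth and serves as the lateral trace of a smooth-on-strips $W_k\to W$ respecting \eqref{app1b}; the smooth identity then applies to $(V_k,W_k)$ and passes to the limit.
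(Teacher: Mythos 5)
Your proposal follows the same two-step strategy as the paper's proof: an integration-by-parts computation for regular pairs in which the only delicate boundary contributions are the corner terms at $(\pm L,1)$, which vanish because continuity of $W$ there together with \eqref{app1b} forces $(\tau_1^\pm-\tau_2^\pm)V(\pm L,1)=0$, followed by a density argument whose resolving observation is exactly the Hardy-type ($H^{1/2}_{00}$) vanishing of the lateral trace of $V$ at the jump point that you identify. Just be aware that the approximation you assert at the end --- density of smooth functions vanishing at the corners in the Hardy-weighted $H^{1/2}$ trace space, together with the lifting back to pairs in $H^1(\mathcal{R},\R^2)$ satisfying \eqref{app1} --- is precisely where the paper invests its technical effort (Lemma~\ref{app10} via Grisvard's trace theorems, and the self-contained proof of Lemma~\ref{lem.d} in Appendix~\ref{sec.dp}), so this step is correct in outline but far from free.
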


When $\tau_1^\pm = \tau_2^\pm= 0$, Lemma~\ref{ID} is a straightforward consequence of \cite[Lemma~4.3.1.2]{Gr85}. The novelty here is the possibility of handling the jump discontinuity in \eqref{app1b} when $\tau_1^\pm \ne \tau_2^\pm$ in \eqref{app2}.

\medskip

The proof follows the lines of that of \cite[Lemma~4.3.1.2]{Gr85}. For $s\ge 1$, we introduce the space
\begin{equation*}
	\mathcal{G}^s(\mathcal{R}) := \{ (V,W)\in H^s(\mathcal{R},\mathbb{R}^2)\,:\, (V,W) \;\text{ satisfies } \eqref{app1}\}\,,
\end{equation*}
and first report the density of $\mathcal{G}^2(\mathcal{R})$ in $\mathcal{G}^1(\mathcal{R})$.

\begin{lemma}\label{app10}
	$\mathcal{G}^2(\mathcal{R})$ is dense in $\mathcal{G}^1(\mathcal{R})$.
\end{lemma}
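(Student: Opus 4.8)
The plan is to prove that $\mathcal{G}^2(\mathcal{R})$ is dense in $\mathcal{G}^1(\mathcal{R})$ by a mollification-and-correction argument. Given $(V,W)\in \mathcal{G}^1(\mathcal{R})$, a naive smoothing fails because standard mollifiers do not respect the boundary conditions \eqref{app1a} and \eqref{app1b}, and in particular cannot accommodate the jump of $\tau^\pm$ at $\eta = 1$. I would therefore first reduce to the homogeneous problem and then recover the boundary data by explicit correctors.

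First I would establish density in the $\eta$-variable. Since $V(\cdot,0)=V(\cdot,1+d)=0$, the function $V$ vanishes on the top and bottom of $\mathcal{R}$, so I can extend $V$ by zero in $\eta$ across $\eta=0$ and $\eta=1+d$ to a slightly larger strip and mollify in $\eta$ only, preserving the homogeneous Dirichlet conditions on those two edges. Next I would address the lateral conditions \eqref{app1b}. The idea is to write $W = -\tau^\pm V + \tilde W$ near $x=\pm L$, so that the constraint \eqref{app1b} becomes the requirement that the corrected field $\tilde W$ vanish on the lateral boundary; the piecewise-constant structure \eqref{app2} is exactly what makes this substitution linear and compatible with smoothing away from $\eta=1$. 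The jump of $\tau^\pm$ at $\eta=1$ is the delicate point: I expect to handle it by mollifying in $x$ with a parameter-dependent mollifier supported in $\{x > -L\}$ near the left edge and in $\{x < L\}$ near the right edge (a one-sided mollification that keeps the support away from the lateral boundary), so that the values at $x=\pm L$ are never altered and the relation \eqref{app1b} is preserved exactly even across the discontinuity in $\eta$.

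The main obstacle will be reconciling the two smoothing directions: mollifying in $x$ to get interior $H^2$-regularity while simultaneously respecting the lateral conditions, and mollifying in $\eta$ to get regularity up to $\eta=0,1+d$ while respecting the jump at $\eta=1$. I would carry this out by a partition-of-unity decomposition of $\mathcal{R}$ into an interior piece (where ordinary mollification in both variables is unconstrained), four boundary-strip pieces (each adjacent to a single edge, where only the mollification tangential to that edge is used, combined with reflection or zero-extension in the normal direction consistent with the homogeneous data after the corrector substitution), and neighborhoods of the four corners, which require the most care. One checks that each localized piece can be approximated in $H^2$ by functions satisfying the corresponding reduced boundary condition, and that the corrector $-\tau^\pm V$ is itself smooth enough once $V$ has been smoothed, so that adding it back produces a pair $(V_\varepsilon,W_\varepsilon)\in \mathcal{G}^2(\mathcal{R})$ converging to $(V,W)$ in $H^1(\mathcal{R},\mathbb{R}^2)$.

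Throughout, the key structural fact I would exploit is that the constraints are \emph{linear and decoupled by edge}: the conditions on $V$ involve only the horizontal edges and the conditions on $W$ involve only the vertical edges, with $V$ entering the latter linearly through $\tau^\pm$. This is what allows the correction $W\mapsto W+\tau^\pm V$ to convert \eqref{app1b} into a homogeneous Dirichlet condition without disturbing \eqref{app1a}, reducing the whole problem to the classical density statement for functions vanishing on prescribed portions of the boundary, for which the mollification scheme of \cite[Lemma~4.3.1.2]{Gr85} applies directly away from the jump.
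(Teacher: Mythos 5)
Your strategy --- reduce to homogeneous boundary data via the corrector $W\mapsto W+\tau^\pm V$ and then mollify edge by edge --- breaks down precisely at the one feature that makes this lemma non-classical, namely the jump of $\tau^\pm$ at $\eta=1$. The corrector $-\tau^\pm V$ is \emph{not} in $H^1(\mathcal{R})$ near the segment $D\times\{1\}$, however smooth $V$ is: its traces from $\mathcal{R}_1$ and from $\mathcal{R}_2$ on that segment differ by $(\tau_1^\pm-\tau_2^\pm)V(\cdot,1)$, so $\tilde W:=W+\tau^\pm V$ leaves $H^1$ and the claim that ``the corrector is itself smooth enough once $V$ has been smoothed'' is false. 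The one-sided mollification in $x$ that ``never alters the values at $x=\pm L$'' is likewise self-defeating: if $(V_\varepsilon,W_\varepsilon)\in\mathcal{G}^2(\mathcal{R})$ then $W_\varepsilon(\pm L,\cdot)\in H^{3/2}(0,1+d)\subset C([0,1+d])$, and \eqref{app1b} then forces $(\tau_1^\pm-\tau_2^\pm)V_\varepsilon(\pm L,1)=0$ --- this is exactly \eqref{app6}. Hence, when $\tau_1^\pm\ne\tau_2^\pm$, \emph{every} element of $\mathcal{G}^2(\mathcal{R})$ vanishes at the points $(\pm L,1)$, whereas keeping the lateral trace of a generic $(V,W)\in\mathcal{G}^1(\mathcal{R})$ unchanged makes $-\tau^\pm V(\pm L,\cdot)$ discontinuous at $\eta=1$ and therefore incompatible with $W_\varepsilon\in H^2(\mathcal{R})$. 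Your scheme provides no mechanism for producing this vanishing, so the approximants it constructs are not in $\mathcal{G}^2(\mathcal{R})$.

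The missing idea is that \eqref{app1b}, combined with $V,W\in H^1(\mathcal{R})$, hides a quantitative condition at $(\pm L,1)$: since $V(\pm L,\cdot)$ and $W(\pm L,\cdot)=-\tau^\pm V(\pm L,\cdot)$ both lie in $H^{1/2}(0,1+d)$, so does $(\tau_1^\pm-\tau_2^\pm)V(\pm L,\cdot)\mathbf{1}_{(0,1)}$, which (for $\tau_1^\pm\ne\tau_2^\pm$) is equivalent to the weighted integrability $\int |V(\pm L,\eta)|^2\,\rd\eta/|\eta-1|<\infty$ near $\eta=1$. Only under such a weighted condition can an $H^{1/2}$ trace be approximated by smooth functions vanishing near the distinguished point; this is the content of Lemma~\ref{lem.d}, and it is why the paper's proof proceeds on the boundary, identifying the trace spaces $\mathcal{Z}^1(\partial\mathcal{R})$ and $\mathcal{Z}^2(\partial\mathcal{R})$ via \cite[Theorems~1.5.2.3 and 1.5.2.8]{Gr85} and proving density there (the same mechanism handles the classical compatibility conditions at the four corners), before lifting back to $\mathcal{R}$ as in \cite[Lemma~4.3.1.3]{Gr85}. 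Any correct proof must locate and exploit this weighted condition; as written, your proposal does not, so it has a genuine gap rather than being an alternative route.
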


 As in the proof of \cite[Lemma~4.3.1.3]{Gr85}, the core of the proof of Lemma~\ref{app10} is to establish the density of the space $\mathcal{Z}^2(\partial\mathcal{R})$ of traces of functions in $\mathcal{G}^2(\mathcal{R})$ in the space $\mathcal{Z}^1(\partial\mathcal{R})$ of traces of functions in $\mathcal{G}^1(\mathcal{R})$, after identifying these two trace spaces.  The proof is almost identical to that of \cite[Lemma~4.3.1.3]{Gr85} and we postpone it to the end of this appendix.  

\begin{proof}[Proof of Lemma~\ref{app1}] Due to Lemma~\ref{app10}, it suffices to prove the identity in Lemma~\ref{app1} when $(V,W)$ belongs to $\mathcal{G}^2(\mathcal{R})$. This additional regularity allows us to use integration by parts to interchange the derivatives and guarantees the continuity of both $V$ and $W$ on $\bar{\mathcal{R}}$. Indeed,  $H^2(\mathcal{R})$ embeds continuously in $C^\alpha(\bar{\mathcal{R}})$ for all $\alpha\in (0,1)$ by \cite[Chapter~2, Theorem~3.8]{Necas2012} and we deduce that
\begin{equation}
	(V,W)\in C(\bar{\mathcal{R}},\mathbb{R}^2)\,. \label{app3}
\end{equation}
Next, after integrating by parts, 
\begin{align*}
	J(V,W) := & \, \int_\mathcal{R} \big( \partial_x V \partial_\eta W - \partial_\eta V \partial_x W \big)\ \mathrm{d}(x,\eta) \\
	= & \, \int_0^{1+d} \Big[ (V \partial_\eta W)(x,\eta) \Big]_{x=-L}^{x=L}\ \rd\eta - \int_\mathcal{R} V \partial_x\partial_\eta W\ \rd (x,\eta) \\
	& \qquad - \int_D \Big[ (V\partial_x W)(x,\eta) \Big]_{\eta=0}^{\eta=1+d} + \int_\mathcal{R} V \partial_x\partial_\eta W\ \rd (x,\eta)\,.
\end{align*}
Since $V(x,0) = V(x,1+d) = 0$ for $x\in D$ by \eqref{app1a} and the second and fourth terms cancel each other out, we obtain 
\begin{equation*}
	J(V,W) = \int_0^{1+d} V(L,\eta) \partial_\eta W(L,\eta)\ \rd\eta - \int_0^{1+d} V(-L,\eta) \partial_\eta W(-L,\eta)\ \rd\eta\,.
\end{equation*}
Now, according to \eqref{app1b} and the regularity of $V$ and $W$,
\begin{align*}
	\partial_\eta W(\pm L,\eta) & = - \tau_1^\pm \partial_\eta V(\pm L,\eta)\,, \qquad \eta\in (0,1)\,, \\
	\partial_\eta W(\pm L,\eta) & = - \tau_2^\pm \partial_\eta V(\pm L,\eta)\,, \qquad \eta\in (1,1+d)\,,
\end{align*} 
so that, since $[\eta\mapsto V(\pm L,\eta)] \in C([0,1+d])$ by \eqref{app3},
\begin{align}
	J(V,W) & = - \tau_1^+ \int_0^1 (V \partial_\eta V)(L,\eta)\ \rd \eta - \tau_2^+ \int_1^{1+d} (V \partial_\eta V)(L,\eta)\ \rd \eta \nonumber \\
	& \qquad + \tau_1^- \int_0^1 (V \partial_\eta V)(-L,\eta)\ \rd \eta + \tau_2^- \int_1^{1+d} (V \partial_\eta V)(-L,\eta)\ \rd \eta \nonumber \\
	& = - \tau_1^+ \frac{V(L,1)^2 - V(L,0)^2}{2} - \tau_2^+ \frac{V(L,1+d)^2 - V(L,1)^2}{2} \nonumber \\
	& \qquad + \tau_1^- \frac{V(-L,1)^2 - V(-L,0)^2}{2} + \tau_2^- \frac{V(-L,1+d)^2 - V(-L,1)^2}{2} \nonumber \\
	& = \frac{\tau_1^+}{2} V(L,0)^2 - \frac{\tau_1^-}{2} V(-L,0)^2 - \frac{\tau_2^+}{2} V(L,1+d)^2 + \frac{\tau_2^-}{2} V(-L,1+d)^2 \label{app4}\\
	& \qquad - \frac{\tau_1^+ - \tau_2^+}{2} V(L,1)^2 + \frac{\tau_1^- - \tau_2^-}{2} V(-L,1)^2\,. \nonumber
\end{align}
On the one hand, it follows from \eqref{app1} and the continuity \eqref{app3} of $V$ that
\begin{equation}
\begin{split}
	V(\pm L,0) & = \lim_{x\to \pm L} V(x,0) = 0\,, \\
	V(\pm L,1+d) & = \lim_{x\to \pm L} V(x,1+d) = 0\,.
\end{split} \label{app5}
\end{equation}
On the other hand, using \eqref{app1b} along with the continuity \eqref{app3} gives
\begin{align*}
	\tau_1^\pm V(\pm L, 1) & = \lim_{\eta\nearrow 1} \tau^\pm(\eta) V(\pm L,\eta)  = - \lim_{\eta\nearrow 1} W(\pm L, \eta) \\
	& = - W(\pm L,1) = - \lim_{\eta\searrow 1} W(\pm L, \eta) = \lim_{\eta\searrow 1} \tau^\pm(\eta) V(\pm L,\eta) \\
	& =  \tau_2^\pm V(\pm L, 1)\,. 
\end{align*}
Consequently,
\begin{equation}
	\left( \tau_1^\pm - \tau_2^\pm \right) V(\pm L, 1)= 0\,. \label{app6}
\end{equation}
Combining \eqref{app4}, \eqref{app5}, and \eqref{app6} leads us to $J(V,W)=0$ and we have proved that
\begin{equation}
	J(V,W) = 0\,, \qquad (V,W)\in \mathcal{G}^2(\mathcal{R})\,. \label{app7}
\end{equation}
In other words, the identity stated in Lemma~\ref{ID} is valid for $(V,W)\in \mathcal{G}^2(\mathcal{R})$. 
\end{proof}

We provide here a proof of the density of $\mathcal{G}^2(\mathcal{R})$ in $\mathcal{G}^1(\mathcal{R})$ as claimed in Lemma~\ref{app10}. It is adapted from that of \cite[Lemma~4.3.1.3]{Gr85}.

\begin{proof}[Proof of Lemma~\ref{app10}] 
To cast the problem under consideration in a form which is as close as possible to that used in \cite[Section~4.3.1]{Gr85}, we recall that $\mathcal{R}$ is a polygon with vertices
\begin{equation*}
	S_1 := (L,0)\,, \qquad S_2 := (L,1+d)\,, \qquad S_3 := (-L,1+d)\,, \qquad S_4 := (-L,0)\,, 
\end{equation*}
and edges
\begin{equation*}
	\begin{split}	
		& \Gamma_1 := D \times \{0\} = (S_4,S_1)\,, \qquad \Gamma_2 := \{L\}\times (0,1+d) = (S_1,S_2)\,,  \\
		& \Gamma_3 := D \times \{1+d\} = (S_2,S_3)\,, \qquad \Gamma_4 := \{-L\}\times (0,1+d) = (S_3,S_4)\,,
	\end{split}
\end{equation*}
\begin{figure}
	\begin{tikzpicture}[scale=0.7]
		\draw[black, line width = 1.5pt] (-6,0)--(6,0);
		\draw[black, line width = 2pt] (-6,0)--(-6,-5);
		\draw[black, line width = 2pt] (6,-5)--(6,0);
		\draw[black, line width = 2pt] (-6,-5)--(6,-5);
		\node at (-6,-5.5) {$-L$};
		\node at (6,-5.5) {$L$};
		\node at (0.5,-2.5) {$\mathcal{R}$};
		\node at (7.25,-5.25) {$x$};
		\draw[black, dashed, line width = 1pt, arrows = ->] (-7,-5.0)--(7,-5.0);
		\node at (-0.3,1) {$\eta$};
		\draw[black, dashed, line width = 1pt, arrows = ->] (0,-6)--(0,1);
		\node at (-6.5,-4.5) {$S_4$};
		\node at (6.5,-4.5) {$S_1$};
		\node at (6.5,0) {$S_2$};
		\node at (-6.5,0) {$S_3$};
		\node at (-5.5,-2.5) {$\Gamma_4$};
		\node at (-0.5,-0.5) {$\Gamma_3$};
		\node at (5.5,-2.5) {$\Gamma_2$};
		\node at (-0.5,-4.5) {$\Gamma_1$};
		 \node at (0.35,-5.35) {$0$};
        \node at (0.35,-0.78) {$1$};
        \draw[black,line width = 1pt] (-0.1,-0.78)--(0.1,-0.78);
        \node at (0.75,0.35) {$1+d$}; 
	\end{tikzpicture}
	\caption{The rectangle $\mathcal{R}$.}\label{Fig4}
\end{figure}
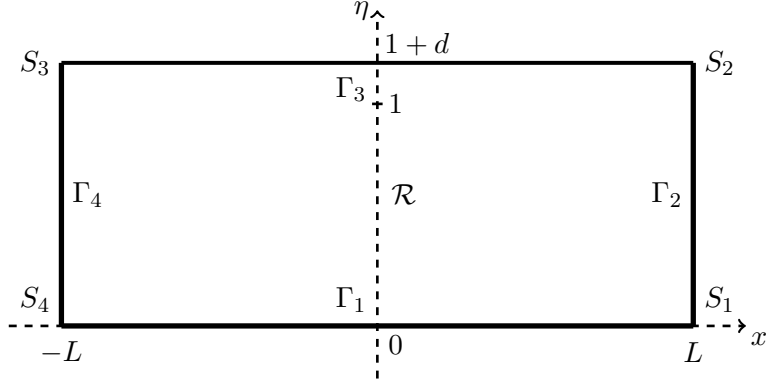
see Figure~\ref{Fig4}. We next introduce local charts $(Y_i)_{1\le i \le 4}$ in the neighborhood of the vertices $(S_i)_{1\le i \le 4}$ defined by
\begin{align*}
	Y_1(s) & := \left\{
	\begin{array}{lc}
		(L,s)\,, & s\in (0,1+d)\,, \\
		(L+s,0)\,, & s\in (-2L,0)\,, 
	\end{array}\right. \\
	Y_2(s) & := \left\{
	\begin{array}{lc}
		(L-s,1+d)\,, & s\in (0,2L)\,, \\
		(L,1+d+s)\,, & s\in (-1-d,0)\,, 
	\end{array}\right. \\
	Y_3(s) & := \left\{
	\begin{array}{lc}
		(-L,1+d-s)\,, & s\in (0,1+d)\,, \\
		(-L-s,1+d)\,, & s\in (-2L,0)\,, 
	\end{array}\right. \\
	Y_4(s) & := \left\{
	\begin{array}{lc}
		(-L+s,0)\,, & s\in (0,2L)\,, \\
		(-L,-s)\,, & s\in (-1-d,0)\,. 
	\end{array}\right.
\end{align*}
We also set 
\begin{equation}
	(\lambda_1,\lambda_2,\lambda_3,\lambda_4) := (1,\tau^+,1,\tau^-) \,,\qquad (\mu_1,\mu_2,\mu_3,\mu_4) := (0,1,0,1)\,, \label{S20}
\end{equation}
and observe that the main difference to the situation studied in \cite[Section~4.3.1]{Gr85} is that $\lambda_2$ and $\lambda_4$ are not constants, but piecewise constant and possibly discontinuous functions. 

Now, according to \cite[Theorem~1.5.2.3]{Gr85} and the definition of $\mathcal{G}^1(\mathcal{R})$, the space $\mathcal{Z}^1(\partial\mathcal{R})$ is the subspace of $\bigotimes_{i=1}^4 H^{1/2}(\Gamma_i,\mathbb{R}^2)$ defined by
\begin{equation}
	\begin{split}
		& (v_i,w_i)\in H^{1/2}(\Gamma_i,\mathbb{R}^2)\,, \qquad 1\le i \le 4\,, \\
		& \lambda_i v_i + \mu_i w_i = 0 \;\text{ on }\; \Gamma_i\,, \qquad 1\le i \le 4\,, \\
		& \int_0^\delta \frac{\big[ v_{i+1}(Y_i(s)) - v_i(Y_i(-s)) \big]^2}{s}\ \rd s < \infty\,, \qquad 1\le i \le 4\,, \\
		& \int_0^\delta \frac{\big[ w_{i+1}(Y_i(s)) - w_i(Y_i(-s)) \big]^2}{s}\ \rd s < \infty\,, \qquad 1\le i \le 4\,,
	\end{split}\label{S21}
\end{equation}
with $\delta := \min\{1+d,2L\}$ and $(v_5,w_5) := (v_1,w_1)$. Owing to \eqref{S20} and \eqref{S21}, the integrability properties listed in \eqref{S21} simplify  and $\mathcal{Z}^1(\partial\mathcal{R})$ is isomorphic to $\mathcal{P}^1(\partial\mathcal{R},\mathbb{R}^2)$ with
\begin{equation*}
	\mathcal{P}^1(\partial\mathcal{R},\mathbb{R}^2) := \mathcal{P}^1(\partial\mathcal{R})\times \mathcal{P}^1(\partial\mathcal{R})\,,
\end{equation*}
where $\mathcal{P}^1(\partial\mathcal{R})$ is the subspace of $\bigotimes_{i=1}^4 H^{1/2}(\Gamma_i)$ defined by
\begin{equation}
	\begin{split}
		& \varphi_i \in H^{1/2}(\Gamma_i)\,,  \qquad 1\le i \le 4\,, \\ 
		& \int_0^\delta \left(  \frac{|\varphi_i(Y_i(-s))|^2}{s} + \frac{|\varphi_{i+1}(Y_{i}(s))|^2}{s} \right)\ \rd s < \infty\,,  \qquad 1\le i \le 4\,,
	\end{split}\label{S21b}
\end{equation}
with $\varphi_5 := \varphi_1$. 

We next turn to $\mathcal{Z}^2(\partial\mathcal{R})$ and deduce from \cite[Theorem~1.5.2.8]{Gr85} that it is the subspace of $\bigotimes_{i=1}^4 H^{3/2}(\Gamma_i,\mathbb{R}^2)$ defined by
\begin{equation}
	\begin{split}
		& (v_i,w_i)\in H^{3/2}(\Gamma_i,\mathbb{R}^2)\,, \qquad 1\le i \le 4\,, \\
		& \lambda_i v_i + \mu_i w_i = 0 \;\text{ on }\; \Gamma_i\,, \qquad 1\le i \le 4\,, \\
		& (v_{i+1},w_{i+1})(S_i) = (v_i,w_i)(S_i)\,, \qquad 1\le i \le 4\,,
	\end{split}\label{S22}
\end{equation}
with $(v_5,w_5)=(v_1,w_1)$. As above, the continuity requirements in \eqref{S22} simplify due to \eqref{S20} and we conclude that $\mathcal{Z}^2(\partial\mathcal{R})$ is isomorphic to $\mathcal{P}^2(\partial\mathcal{R},\mathbb{R}^2)$ with
\begin{equation*}
	\mathcal{P}^2(\partial\mathcal{R},\mathbb{R}^2) := \mathcal{P}^2(\partial\mathcal{R})\times \mathcal{P}^2(\partial\mathcal{R})\,,
\end{equation*}
where $\mathcal{P}^2(\partial\mathcal{R})$ is the subspace of $\bigotimes_{i=1}^4 H^{3/2}(\Gamma_i)$ defined by
\begin{equation}
	\begin{split}
		& \varphi_i \in H^{3/2}(\Gamma_i)\,,  \qquad 1\le i \le 4\,, \\ 
		& \varphi_i(S_i) = \varphi_{i+1}(S_i) = 0\,,  \qquad 1\le i \le 4\,,
	\end{split}\label{S22b}
\end{equation}
with $\varphi_5 = \varphi_1$. 

Now, since $\bigotimes_{i=1}^4 C_c^\infty(\Gamma_i)$ is dense in $\mathcal{P}^1(\partial\mathcal{R})$ by Lemma~\ref{lem.d} below and obviously included in $\mathcal{P}^2(\partial\mathcal{R})$, the density of $\mathcal{P}^2(\partial\mathcal{R})$ in $\mathcal{P}^1(\partial\mathcal{R})$ follows, and that of $\mathcal{Z}^2(\partial\mathcal{R})$ in $\mathcal{Z}^1(\partial\mathcal{R})$ as well. The remainder of the proof is then the same as in \cite[Lemma~4.3.1.3]{Gr85}, to which we refer.
\end{proof}

\section{A Density Result}\label{sec.dp}

In this appendix, we recall a density result which is stated without proof in \cite[Lemma~4.3.1.3]{Gr85} and used in the proof of Lemma~\ref{app10}. We provide a proof for the sake of completeness.
 
\begin{lemma}\label{lem.d}
	The space $C_c^\infty((0,1))$ is dense in $H^{1/2}(0,1)\cap L_2((0,1),\rd x/x)$.
\end{lemma}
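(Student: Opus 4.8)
The plan is to endow $X:=H^{1/2}(0,1)\cap L_2((0,1),\rd x/x)$ with the graph norm $\|\varphi\|_X^2:=\|\varphi\|_{H^{1/2}(0,1)}^2+\int_0^1 |\varphi(x)|^2\,\rd x/x$ and to treat the two endpoints separately by means of a partition of unity, the point being that the weight $1/x$ only constrains the behaviour at $x=0$. Fixing $\rho\in C^\infty(\R)$ with $\rho\equiv 1$ on $(-\infty,1/3)$ and $\rho\equiv 0$ on $(2/3,\infty)$, I would split $\varphi=\rho\varphi+(1-\rho)\varphi=:\varphi_0+\varphi_1$. Since multiplication by a fixed function in $W_\infty^1(\R)$ is bounded both on $H^{1/2}(0,1)$ and on $L_2((0,1),\rd x/x)$ (the weight being harmless for a bounded multiplier), both $\varphi_0$ and $\varphi_1$ lie in $X$, with $\mathrm{supp}\,\varphi_0\subset[0,2/3]$ and $\mathrm{supp}\,\varphi_1\subset[1/3,1]$. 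It then suffices to approximate $\varphi_0$ and $\varphi_1$ separately in $X$ by elements of $C_c^\infty(0,1)$.

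For $\varphi_1$, which vanishes on $[0,1/3]$, the weight $1/x$ is bounded on the support and the only genuine endpoint is $x=1$, where the $X$-norm reduces locally to the $H^{1/2}$-norm. Here I would invoke the classical borderline identity $H_0^{1/2}(0,1)=H^{1/2}(0,1)$, i.e. the density of $C_c^\infty(0,1)$ in $H^{1/2}(0,1)$ (see \cite{Gr85}), to select $\psi_n\in C_c^\infty(0,1)$ with $\psi_n\to\varphi_1$ in $H^{1/2}(0,1)$. Multiplying by a fixed cutoff $\theta\in C^\infty(\R)$ with $\theta\equiv 0$ on $[0,1/4]$ and $\theta\equiv 1$ on $[1/3,1]$, the functions $\theta\psi_n$ belong to $C_c^\infty(0,1)$ and satisfy $\theta\psi_n\to\theta\varphi_1=\varphi_1$ in $H^{1/2}(0,1)$; since $\theta\psi_n$ and $\varphi_1$ are all supported in $[1/4,1]$, where $1/x\le 4$, convergence in $H^{1/2}$ (hence in $L_2$) forces convergence in $L_2((0,1),\rd x/x)$ as well, so $\theta\psi_n\to\varphi_1$ in $X$.

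For $\varphi_0$ the only endpoint is the weighted one at $x=0$. I would use a scaled cutoff $\zeta_\epsilon(x):=\zeta_1(x/\epsilon)$ built from a fixed $\zeta_1\in C^\infty(\R)$ with $\zeta_1\equiv 0$ on $(-\infty,1]$ and $\zeta_1\equiv 1$ on $[2,\infty)$, and set $\omega_\epsilon:=1-\zeta_\epsilon$, supported in $[0,2\epsilon]$ with $\|\omega_\epsilon'\|_\infty\le C/\epsilon$. The weighted part converges at once, $\int_0^1|(1-\zeta_\epsilon)\varphi_0|^2\,\rd x/x\le\int_0^{2\epsilon}|\varphi_0|^2\,\rd x/x\to0$ by absolute continuity of the integral. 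Writing $[f]_{1/2}^2:=\int_0^1\int_0^1\frac{|f(x)-f(y)|^2}{|x-y|^2}\,\rd x\,\rd y$ for the Gagliardo seminorm, the splitting $\omega_\epsilon\varphi_0(x)-\omega_\epsilon\varphi_0(y)=\omega_\epsilon(x)(\varphi_0(x)-\varphi_0(y))+\varphi_0(y)(\omega_\epsilon(x)-\omega_\epsilon(y))$ gives $[\omega_\epsilon\varphi_0]_{1/2}^2\le 2I_\epsilon+2II_\epsilon$, with $I_\epsilon:=\int_0^1\int_0^1\omega_\epsilon(x)^2\frac{|\varphi_0(x)-\varphi_0(y)|^2}{|x-y|^2}\,\rd x\,\rd y$ and $II_\epsilon:=\int_0^1\varphi_0(y)^2K_\epsilon(y)\,\rd y$, where $K_\epsilon(y):=\int_0^1\frac{|\omega_\epsilon(x)-\omega_\epsilon(y)|^2}{|x-y|^2}\,\rd x$. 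Since $\omega_\epsilon^2\le\mathbf 1_{[0,2\epsilon]}$ and $[\varphi_0]_{1/2}<\infty$, dominated convergence yields $I_\epsilon\to0$. A routine near/far splitting of the $x$–integral in $K_\epsilon$, using $\|\omega_\epsilon'\|_\infty\le C/\epsilon$ on $\{|x-y|<y/2\}$ and $|\omega_\epsilon(x)-\omega_\epsilon(y)|\le 1$ on $\{|x-y|\ge y/2\}$, produces the uniform bound $K_\epsilon(y)\le C/y$ for all $y\in(0,1)$, together with $K_\epsilon(y)\to0$ for each fixed $y>0$. As $\varphi_0^2/y\in L_1(0,1)$ by the defining constraint of $X$, dominated convergence then gives $II_\epsilon\to0$. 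Hence $\zeta_\epsilon\varphi_0\to\varphi_0$ in $X$; since $\zeta_\epsilon\varphi_0$ is supported in $[\epsilon,2/3]\Subset(0,1)$, a standard mollification (the weight being bounded on a fixed compact neighbourhood of the supports) produces the required $C_c^\infty(0,1)$ approximants.

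The heart of the argument, and its only delicate point, is the uniform kernel estimate $K_\epsilon(y)\le C/y$: it is precisely this bound, together with the integrability $\int_0^1\varphi_0^2\,\rd x/x<\infty$, that supplies the dominating function $C\varphi_0^2/x$ for the dominated convergence step controlling $II_\epsilon$. For a generic $\varphi_0\in H^{1/2}(0,1)$ lacking the weighted control, $\varphi_0^2/x$ need not be integrable and the cutoff sequence may fail to converge in $H^{1/2}$; this is the familiar borderline failure at the critical exponent $s=1/2$, reflected in the strict inclusion $H_{00}^{1/2}(0,1)\subsetneq H^{1/2}(0,1)$, and it is exactly what the weight $1/x$ in the definition of $X$ rules out. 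By contrast, the right-endpoint step for $\varphi_1$ carries no weight and therefore genuinely relies on the cited identity $H_0^{1/2}(0,1)=H^{1/2}(0,1)$, whose own proof requires a logarithmic cutoff.
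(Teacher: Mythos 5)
Your argument is correct, and it takes a genuinely different route from the one in the paper. The paper first proves the analogous density statement on the half-line $(0,\infty)$ (Lemma~\ref{lem.d1}): it smooths the function via a partition of unity and mollification so as to work with an element of $C_c^\infty([0,\infty))$, extracts the pointwise bound $|F(x)|\le A_1 x$ near the origin from the weighted integrability (which forces $F(0)=0$), and then controls the scaled cutoff by interpolating the $L_2$- and $H^1$-estimates to get convergence in $H^{1/2}$; the passage back to $(0,1)$ uses an extension operator and, at the right endpoint, a truncation whose $H^1$-norm blows up, so that only weak $H^{1/2}$-convergence is available and Mazur's lemma is invoked to produce strongly convergent convex combinations. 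You instead localize the two endpoints at the outset and treat them by different means: at $x=1$ you outsource the borderline density $H_0^{1/2}(0,1)=H^{1/2}(0,1)$ to the classical result (Grisvard, Theorem~1.4.2.3/1.4.2.4), whereas the paper in effect re-derives that fact via the Mazur argument; at $x=0$ you work directly at the $H^{1/2}$-level with the Gagliardo seminorm, and the commutator estimate with the uniform kernel bound $K_\varepsilon(y)\le C/y$ isolates exactly where the weight enters, namely as the dominating function $C\varphi_0^2/x\in L_1$. I checked the kernel bound: the near part $|x-y|<y/2$ vanishes for $y>4\varepsilon$ and is $O(y/\varepsilon^2)=O(1/y)$ for $y\le 4\varepsilon$, and the far part is bounded by $2\int_{y/2}^\infty t^{-2}\,\rd t=4/y$, so the domination and the pointwise decay both hold. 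What each approach buys: yours is shorter and makes the role of the weight more transparent at the origin, at the price of citing the borderline density theorem at the other endpoint; the paper's is longer but self-contained, needing only $L_2$--$H^1$ interpolation on smooth functions plus Mazur's lemma, and it yields the half-line statement (Lemma~\ref{lem.d1}) as a separate result along the way.
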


A preliminary step is the analogue of Lemma~\ref{lem.d} when $(0,1)$ is replaced by $(0,\infty)$.

\begin{lemma}\label{lem.d1}
	The space $C_c^\infty((0,\infty))$ is dense in $H^{1/2}(0,\infty)\cap L_2((0,\infty),\rd x/x)$.
\end{lemma}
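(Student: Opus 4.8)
The plan is to reduce the statement to a standard density fact on the whole line $\R$ by means of extension by zero. For $f:(0,\infty)\to\R$ let $\bar f$ denote its extension by $0$ to $\R$. Splitting the Gagliardo double integral of $\bar f$ over $\R\times\R$ according to the signs of the two variables and using $\bar f\equiv 0$ on $(-\infty,0)$, one computes
\begin{equation*}
[\bar f]_{H^{1/2}(\R)}^2 = [f]_{H^{1/2}(0,\infty)}^2 + 2\int_0^\infty \frac{|f(x)|^2}{x}\,\rd x\,,
\end{equation*}
because $\int_{-\infty}^0 (x-y)^{-2}\,\rd y = 1/x$ for $x>0$. Together with $\|\bar f\|_{L_2(\R)}=\|f\|_{L_2(0,\infty)}$, this shows that $f\mapsto\bar f$ is a linear isomorphism of $H^{1/2}(0,\infty)\cap L_2((0,\infty),\rd x/x)$ onto the closed subspace $\mathcal{H}_0:=\{u\in H^{1/2}(\R)\,:\, u=0\text{ a.e. on }(-\infty,0)\}$ of $H^{1/2}(\R)$, the two norms being equivalent. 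Since the extension of a function in $C_c^\infty((0,\infty))$ is smooth, compactly supported in $(0,\infty)\subset\R$, and belongs to $\mathcal{H}_0$, it suffices to prove that $C_c^\infty((0,\infty))$ is dense in $\mathcal{H}_0$ for the norm of $H^{1/2}(\R)$.

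To this end I would approximate a given $u\in\mathcal{H}_0$ in three steps, all taking place in $H^{1/2}(\R)$. First, cut off at infinity: with $\chi\in C_c^\infty(\R)$ equal to $1$ near $0$ and $\chi_R:=\chi(\cdot/R)$, one has $\chi_R u\to u$ in $H^{1/2}(\R)$ as $R\to\infty$, and $\chi_R u\in\mathcal{H}_0$ is supported in a bounded set. Second, and this is the decisive maneuver, translate the support away from the endpoint $0$: writing $T_\theta g:=g(\cdot-\theta)$, the function $T_\theta(\chi_R u)$ is supported in a compact subset of $(0,\infty)$ and $T_\theta(\chi_R u)\to\chi_R u$ in $H^{1/2}(\R)$ as $\theta\to0^+$, because translation is strongly continuous on $H^{1/2}(\R)$. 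Third, mollify: for a standard mollifier $\rho_\delta$ with $\delta<\theta$, the convolution $\rho_\delta* T_\theta(\chi_R u)$ lies in $C_c^\infty((0,\infty))$ and converges to $T_\theta(\chi_R u)$ in $H^{1/2}(\R)$ as $\delta\to0$. A diagonal choice of $(R,\theta,\delta)$ then yields a sequence in $C_c^\infty((0,\infty))$ converging to $u$.

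The convergences in the second and third steps are classical Fourier-analytic facts on $\R$ (the Fourier multipliers $e^{i\theta\xi}$ and $\hat\rho(\delta\xi)$ are bounded by $1$ and converge pointwise to $1$, so dominated convergence applies), while the first is the usual statement that compactly supported functions are dense in $H^{1/2}(\R)$, proved by splitting the Gagliardo seminorm of $(1-\chi_R)u$ and a scaling estimate for the cutoff. I expect the main point, and the only place the weight $\rd x/x$ intervenes, to be the isometry identity of the first paragraph: it encodes the weighted integrability condition as the coupling between $(0,\infty)$ and $(-\infty,0)$ in the $H^{1/2}(\R)$-seminorm, thereby reducing the problem to approximation inside $H^{1/2}(\R)$, where the endpoint $0$ no longer plays a distinguished role. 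The subtle point to watch is the order of operations: one must translate the support strictly into $(0,\infty)$ \emph{before} mollifying, so that the mollified functions do not leak mass onto $(-\infty,0)$ and hence remain in $\mathcal{H}_0$.
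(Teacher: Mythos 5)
Your argument is correct, but it takes a genuinely different route from the paper. Your key move is the exact identity
\begin{equation*}
	[\bar f]_{H^{1/2}(\R)}^2 = [f]_{H^{1/2}(0,\infty)}^2 + 2\int_0^\infty \frac{|f(x)|^2}{x}\,\rd x
\end{equation*}
for the zero-extension $\bar f$, which identifies $X:=H^{1/2}(0,\infty)\cap L_2((0,\infty),\rd x/x)$ with the Lions--Magenes space $H_{00}^{1/2}$, i.e.\ the closed subspace of $H^{1/2}(\R)$ of functions vanishing a.e.\ on $(-\infty,0)$; the computation $\int_{-\infty}^0 (x-y)^{-2}\,\rd y = 1/x$ is right, the two norms are equivalent, and the subsequent cutoff--translate--mollify scheme on the line is standard (with the correct and necessary precaution $\delta<\theta$ so that mollification does not push mass onto $(-\infty,0)$). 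The paper instead works entirely on the half-line: its Step~1 is a Meyers--Serrin partition-of-unity-plus-mollification argument producing approximants in $C^\infty([0,\infty))\cap X$ that are smooth \emph{up to} the endpoint, and its Step~2 removes the endpoint by truncation with $\chi(\cdot/\lambda)$, the weight $\rd x/x$ entering through the observation that $F\in C_c^\infty([0,\infty))\cap X$ forces $F(0)=0$, hence $|F(x)|\le A_1 x$, which makes the truncation error $O(\lambda)$ in $H^{1/2}$ after interpolation. In effect, your translation trick plays the role of the paper's quantitative truncation estimates, and the weighted integrability is consumed once and for all in the isometry identity rather than reappearing at the endpoint. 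Your route is shorter and conceptually cleaner, at the price of invoking the strong continuity of translations and the multiplier bounds on $H^{1/2}(\R)$; the paper's route is longer but self-contained and purely real-variable on $(0,\infty)$, which also sets up the extension-operator reduction used for the interval case in Lemma~\ref{lem.d}.
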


\begin{proof}
Set $X:= H^{1/2}(0,\infty)\cap L_2((0,\infty),\rd x/x)$. The proof is divided into two steps: we first show that  $C_c^\infty([0,\infty))\cap X$ is dense in $X$ with an argument from \cite{MS1964}. We then use a method of truncation as in \cite{Tr1978} to complete  the proof.
	
\smallskip
	
\noindent\textit{Step~1: Density of $C_c^\infty([0,\infty))\cap X$ in $X$.} Consider $f\in X$ and $\delta\in (0,1)$. As in \cite{MS1964}, we define $I_j := (1/j,j)$ for $j\ge 1$ and note that
\begin{equation*}
	D_j := I_{j+1}\setminus \bar{I}_{j-1} = \left( \frac{1}{j+1} , \frac{1}{j-1} \right) \cup (j-1,j+1)\,, \qquad j\ge 2\,.
\end{equation*}
Since $(0,\infty) = I_2 \cup \left( \bigcup_{j\ge 2} D_j \right)$, there exists a partition of unity $(\psi_j)_{j\ge 1}$ consisting of non-negative functions in $C_c^\infty((0,\infty))$ such that
\begin{subequations}\label{pu}
\begin{equation}
	\mathrm{supp}\,\psi_1 \subset I_2 \;\;\text{ and }\;\; \mathrm{supp}\,\psi_j \subset D_j \;\text{ for }\;  j\ge 2\,, \label{pu1}
\end{equation} 
\begin{equation}
	\sum_{j=1}^\infty \psi_j(x) = 1 \;\text{ for }\;  x\in (0,\infty)\,, \label{pu2}
\end{equation}
and, for every compact subset $K$ of $(0,\infty)$, there exist an integer $\ell_K\ge 1$ and an open subset $\mathcal{O}_K$ of $(0,\infty)$ such that 
\begin{equation}
	K\subset \mathcal{O}_K \;\;\text{ and }\;\; \sum_{j=1}^{\ell_K} \psi_j(x) = 1 \;\text{ for }\; x\in \mathcal{O}_K\,.
	\label{pu3}
\end{equation}
\end{subequations}
Observe that, if $x\in (1/2,1)\cup (1,2)$, then $x\in I_2\cap D_2$ and $x\not\in D_k$ for $k\ge 3$, while, if $x\in (1/(j+1),1/j)\cup (j,j+1)$ for some $j\ge 2$, then $x\in D_j\cap D_{j+1}$ and $x\not\in D_k$ for $1 \le k\le j-1$ and $k\ge j+2$. In addition, $1\in I_2$ but $1\not\in D_k$ for $k\ge 2$. Consequently, given $x\in (0,\infty)$, the series in \eqref{pu2} has at most two non-vanishing terms. 

Next, let $(\varrho_\varepsilon)_{\varepsilon\in (0,1)}$ be a family of $C^\infty$-smooth mollifiers satisfying
\begin{equation}
	\mathrm{supp}\, \varrho_\varepsilon \subset (-\varepsilon,\varepsilon) \;\;\text{ and }\;\; \int_{\mathbb{R}} \varrho_\varepsilon(x)\ \mathrm{d}x = 1 \;\text{ for }\; \varepsilon\in (0,1)\,. \label{pu4}
\end{equation}  
Let $j\ge 3$ and $\varepsilon\in (0,1)$. Owing to the properties of the convolution, 
\begin{equation*}
	\mathrm{supp}\, \big(\varrho_\varepsilon * (\psi_j f)\big) \subset \left( \frac{1}{j+1} - \varepsilon , \frac{1}{j-1} + \varepsilon \right) \cup (j-1-\varepsilon , j+1+\varepsilon)\,,
\end{equation*}
so that, if $\varepsilon\in (0,1/(j+1)(j+2))$, then
\begin{subequations}\label{pu5}
	\begin{equation}
		\mathrm{supp}\, \big(\varrho_\varepsilon * (\psi_j f)\big) \subset \left( \frac{1}{j+2} , j+2 \right)\,. \label{pu5a}
	\end{equation}
Similarly, for $\varepsilon\in (0,1/12)$, 
	\begin{equation}
	\mathrm{supp}\, \big(\varrho_\varepsilon * (\psi_2 f)\big) \subset \left( \frac{1}{4} , 4 \right)\,, \label{pu5b}
\end{equation}
and, for $\varepsilon\in (0,1/6)$, 
\begin{equation}
	\mathrm{supp}\, \big(\varrho_\varepsilon * (\psi_1 f)\big) \subset \left( \frac{1}{3} , 3 \right)\,. \label{pu5c}
\end{equation}
\end{subequations}
Since $\big( \varrho_\varepsilon * (\psi_j f) \big)_{\varepsilon\in (0,1)}$ converges to $\psi_j f$ in $H^{1/2}(0,\infty)$ and in $L_2(0,\infty)$ for each $j\ge 1$ as $\varepsilon\to 0$, we may pick $\varepsilon_j\in (0,1)$ such that
\begin{subequations}\label{pu678}
	\begin{align}
		& \varepsilon_j \in \left( 0 , \frac{1}{(j+1)(j+2)} \right)\,, \qquad j\ge 1\,, \label{pu6}\\
		& \left\| \varrho_{\varepsilon_j} * (\psi_j f) - \psi_j f \right\|_{H^{1/2}(0,\infty)} \le \delta 2^{-j}\,, \qquad j\ge 1\,, \label{pu7} \\
		& \left\| \varrho_{\varepsilon_j} * (\psi_j f) - \psi_j f \right\|_{L_2(0,\infty)} \le \frac{\delta}{\sqrt{j+2}} 2^{-j}\,, \qquad j\ge 1\,. \label{pu8}
	\end{align}
\end{subequations}
A first consequence of \eqref{pu5}, \eqref{pu6}, and \eqref{pu8} is that, for $j\ge 1$, 
\begin{align}
	& \left\| \varrho_{\varepsilon_j} * (\psi_j f) - \psi_j f \right\|_{L_2((0,\infty),\mathrm{d}x/x)} \nonumber\\
	& \qquad = \left( \int_{1/(j+2)}^{j+2} \left| \left( \varrho_{\varepsilon_j} * (\psi_j f) - \psi_j f \right)(x) \right|^2\ \frac{\rd x}{x} \right)^{1/2} \nonumber \\
	& \qquad \le \sqrt{j+2} \left( \int_{1/(j+2)}^{j+2} \left| \left( \varrho_{\varepsilon_j} * (\psi_j f) - \psi_j f \right)(x) \right|^2\ \rd x \right)^{1/2} \nonumber \\
	& \qquad \le \sqrt{j+2} \left\| \varrho_{\varepsilon_j} * (\psi_j f) - \psi_j f \right\|_{L_2(0,\infty)} \nonumber \\
	& \qquad \le \delta 2^{-j}\,. \label{pu9}
\end{align}

Still following the argument in \cite{MS1964}, we now define
\begin{equation}
	F := \sum_{j=1}^\infty \varrho_{\varepsilon_j} * (\psi_j f)  \;\;\text{ and }\;\; F_k := \sum_{j=1}^k \varrho_{\varepsilon_j} * (\psi_j f) \,, \quad k\ge 1\,. \label{pu10}
\end{equation}
By \eqref{pu5}, given $x\in (0,\infty)$, 
\begin{equation*}
	\big( \varrho_{\varepsilon_j} * (\psi_j f) \big)(x) = 0 \;\text{ for }\; j \ge 2 + \max\left\{ x , \frac{1}{x}\right\}\,,
\end{equation*}
so that $F(x)$ is actually a finite sum. As a consequence, $F\in C^\infty([0,\infty))$ and, for $k\ge 2$ and $x\in I_k$, we infer from \eqref{pu} and \eqref{pu10} that
\begin{subequations}\label{pu11}
	\begin{equation}
		F(x) = \sum_{j=1}^{k+2} \big( \varrho_{\varepsilon_j} * (\psi_j f) \big)(x) = F_{k+2}(x) \label{pu11a}
	\end{equation}
and
	\begin{equation}
	f(x) = \sum_{j=1}^\infty (\psi_j f)(x) = \sum_{j=1}^{k+2} (\psi_j f)(x)\,. \label{pu11b}
\end{equation}
\end{subequations}

Let $k\ge 2$. Owing to \eqref{pu7} and \eqref{pu11},
\begin{align}
	\|f-F\|_{H^{1/2}(I_k)} & = \left\| \sum_{j=1}^{k+2} \varrho_{\varepsilon_j} * (\psi_j f) - \sum_{j=1}^{k+2} \psi_j f \right\|_{H^{1/2}(I_k)} \nonumber \\
	& \le \sum_{j=1}^{k+2} \| \varrho_{\varepsilon_j} * (\psi_j f) - \psi_j f \|_{H^{1/2}(I_k)} \le  \delta\,. \label{pu12}
\end{align}
In particular,
\begin{equation}\label{pu30}
	\|F\|_{H^{1/2}(I_k)} \le \delta + \|f\|_{H^{1/2}(I_k)} \le \delta + \|f\|_{H^{1/2}(0,\infty)}\,.
\end{equation}
We use Fatou's lemma  first to deduce from~\eqref{pu30}  that $F\in H^{1/2}(0,\infty)$ with 
$$
\|F\|_{H^{1/2}(0,\infty)} \le \delta + \|f\|_{H^{1/2}(0,\infty)}\,,
$$ 
and then from \eqref{pu12} that
\begin{equation}
	\| f - F \|_{H^{1/2}(0,\infty)} \le \delta\,. \label{pu13}
\end{equation}
Similarly, for $k\ge 2$, it follows from \eqref{pu9} and \eqref{pu11} that
\begin{align}
	\|f-F\|_{L_2(I_k,\rd x/x)} & = \left\| \sum_{j=1}^{k+2} \varrho_{\varepsilon_j} * (\psi_j f) - \sum_{j=1}^{k+2} \psi_j f \right\|_{L_2(I_k,\rd x/x)} \nonumber \\
	& \le \sum_{j=1}^{k+2} \| \varrho_{\varepsilon_j} * (\psi_j f) - \psi_j f \|_{L_2(I_k,\rd x/x)} \le  \delta\,. \label{pu14}
\end{align}
In particular,
\begin{equation}
	\| F \|_{L_2(I_k,\rd x/x)} \le \delta + \|f\|_{L_2(I_k,\rd x/x)}\,, \label{pu15}
\end{equation}
and we invoke again Fatou's lemma to derive first from \eqref{pu15} that $F\in L_2((0,\infty),\rd x/x)$, and then from \eqref{pu14} that
\begin{equation}
	\|f - F \|_{L_2((0,\infty),\rd x/x)} \le \delta\,. \label{pu16}
\end{equation}
According to \eqref{pu13} and \eqref{pu16}, we have constructed a function $F\in C^\infty([0,\infty))\cap X$ lying in a $\delta$-neighborhood of $f$ in $X$. This result being valid whatever the value of $\delta\in (0,1)$, we have established the density of $C^\infty([0,\infty))\cap X$ in $X$. Finally, we use a standard truncation argument to deduce  that $C_c^\infty([0,\infty))\cap X$ is dense in $X$.
 
\smallskip
	
\noindent\textit{Step~2: Density of $C_c^\infty((0,\infty))$ in $X$.} We argue as in the proofs of \cite[Theorems~2.9.2~(c) \&~2.9.3~(d)]{Tr1978}. We fix $\chi\in C^\infty(\mathbb{R})$ such that $\chi(x)=0$ for $x\in (-\infty,1]$, $\chi(x)=1$ for $x\in [2,\infty)$, and $\chi(x)\in [0,1]$ for $x\in [1,2]$, and set $\chi_\lambda(x) := \chi(x/\lambda)$ for $x\in\mathbb{R}$ and $\lambda\in (0,1)$. 

Let $f\in X$ and $\delta\in (0,1)$. According to the previous step, there is $F\in C_c^\infty([0,\infty))\cap X$ such that
\begin{equation}
	\|f - F \|_{H^{1/2}(0,\infty)} + \|f - F \|_{L_2((0,\infty),\rd x/x)} \le \frac{\delta}{2}\,. \label{pu17}
\end{equation}
Since $F\in C_c^\infty([0,\infty))\cap X$, we observe that, for $\varepsilon\in (0,1)$, 
\begin{align*}
	\|F\|_{L_2((0,\infty),\rd x/x)} & \ge \int_0^1 |F(x)|^2\ \frac{\rd x}{x+\varepsilon} = \int_0^1 \left| F(0) + \int_0^x F'(y)\ \rd y \right|^2 \frac{\rd x}{x+\varepsilon} \\
	& \ge \int_0^1 \left[ \frac{|F(0)|^2}{2} - \left( \int_0^x F'(y)\ \rd y \right)^2 \right] \frac{\rd x}{x+\varepsilon} \\
	& \ge \frac{|F(0)|^2}{2} \ln\left( 1 + \frac{1}{\varepsilon} \right) - \|F'\|_{L_\infty(0,1)}^2 \int_0^1 \frac{x^2}{x+\varepsilon}\ \rd x \\
	& \ge \frac{|F(0)|^2}{2} \ln\left( 1 + \frac{1}{\varepsilon} \right) - \frac{\|F'\|_{L_\infty(0,1)}^2}{2}\,.
\end{align*}
Letting  $\varepsilon\to 0$ in the above inequality implies that $F(0)=0$, from which we deduce that 
\begin{equation}
	|F(x)|\le A_1 x \,, \qquad x\in [0,2]\,, \label{pu21}
\end{equation}
with $A_1 := \|F'\|_{L_\infty(0,2)}$. 

Now, for $\lambda\in (0,1)$, it follows from \eqref{pu21} that 
\begin{align}
	\| F - \chi_\lambda F\|_{L_2(0,\infty)} & = \left( \int_0^{2\lambda} \big( 1 - \chi_\lambda(x) \big)^2 |F(x)|^2\ \rd x \right)^{1/2} \nonumber \\
	& \le A_1 \left( \int_0^{2\lambda} x^2\ \rd x \right)^{1/2} \le \sqrt{3} A_1 \lambda^{3/2} \label{pu18}
\end{align}
and
\begin{align}
	\|F' - (\chi_\lambda F)'\|_{L_2(0,\infty)} & = \|(1-\chi_\lambda) F' + F \chi_\lambda'\|_{L_2(0,\infty)} \nonumber \\
	& \le \|(1-\chi_\lambda) F' \|_{L_2(0,\infty)} + \| F \chi_\lambda'\|_{L_2(0,\infty)} \nonumber \\
	& \le \left( \int_0^{2\lambda} |F'(x)|^2\ \rd x \right)^{1/2} \nonumber \\
	& \qquad + \frac{A_1}{\lambda} \left( \int_\lambda^{2\lambda} x^2 \left| \chi'\left( \frac{x}{\lambda} \right) \right|^2\ \rd x \right)^{1/2} \nonumber\\
	& \le \sqrt{2\lambda} \|F'\|_{L_\infty(0,2)} + \sqrt{\frac{7\lambda}{3}} A_1 \|\chi'\|_{L_\infty(\mathbb{R})} \nonumber\\
	& \le A_2 \sqrt{\lambda} \label{pu19}\,,
\end{align}
where $A_2 := 2 A_1(1+ \|\chi'\|_{L_\infty(\mathbb{R})})$. By interpolation, we deduce from \eqref{pu18} and \eqref{pu19} that
\begin{equation}
	\|F - \chi_\lambda F\|_{H^{1/2}(0,\infty)}  \le A \|F - \chi_\lambda F\|_{H^1(0,\infty)}^{1/2} \|F - \chi_\lambda F\|_{L_2(0,\infty)}^{1/2} \le A \lambda \label{pu20}
\end{equation}
for some constant $A>0$ depending only on $F$ and $\chi$. Similarly, by \eqref{pu21},
\begin{align}
	\|F - \chi_\lambda F \|_{L_2((0,\infty),\rd x/x)} & = \left( \int_0^{2\lambda} \big( 1-\chi_\lambda(x) \big)^2 |F(x)|^2 \frac{\rd x}{x} \right)^{1/2} \nonumber \\
	& \le A_1 \left( \int_0^{2\lambda} x\ \rd x \right)^{1/2} = \sqrt{2} A_1 \lambda\,. \label{pu22}
\end{align}
Thanks to \eqref{pu20} and \eqref{pu22}, there is $\lambda_\delta\in (0,1)$ such that 
\begin{equation*}
		\|F - \chi_{\lambda_\delta} F \|_{H^{1/2}(0,\infty)} + \|F - \chi_{\lambda_\delta} F \|_{L_2((0,\infty),\rd x/x)} \le \frac{\delta}{2}\,. 
\end{equation*}
Together with \eqref{pu17}, the above estimate ensures that 
\begin{equation*}
	\|f - \chi_{\lambda_\delta} F \|_{H^{1/2}(0,\infty)} + \|f - \chi_{\lambda_\delta} F \|_{L_2((0,\infty),\rd x/x)} \le \delta\,, 
\end{equation*}
and completes the proof, since $\chi_{\lambda_\delta} F\in C_c^\infty((0,\infty))$.
\end{proof}

\begin{proof}[Proof of Lemma~\ref{lem.d}]
The derivation of Lemma~\ref{lem.d} from Lemma~\ref{lem.d1} is also adapted from the proofs of \cite[Theorems~2.9.2~(c) \&~2.9.3~(d)]{Tr1978}. First, arguing as in the proof of \cite[Theorem~8.6]{Br2011}, we construct an extension operator 
$$
E\in \mathcal{L}\big(L_2(0,1),L_2(0,\infty)\big)\cap \mathcal{L}\big(H^{1}(0,1),H^1(0,\infty)\big)\,,
$$
$$
E\in  \mathcal{L}\big(L_2((0,1),\rd x/x),L_2((0,\infty),\rd x/x)\big)\,,
$$
and satisfies $Ef=f$ a.e. on $(0,1)$. By interpolation, $E\in\mathcal{L}\big(H^{1/2}(0,1),H^{1/2}(0,\infty)\big)$.

Now, let $f\in H^{1/2}(0,1)\cap L_2((0,1),\rd x/x)$ and $\delta\in (0,1)$.  Since $Ef$ belongs to $H^{1/2}(0,\infty)\cap L_2((0,\infty),\rd x/x)$, we infer from Lemma~\ref{lem.d1} that there is $\xi\in C_c^\infty((0,\infty))$ such that
\begin{equation}
	\| Ef - \xi \|_{H^{1/2}(0,\infty)} + \| Ef - \xi \|_{L_2((0,\infty),\rd x/x)} \le \frac{\delta}{2}\,. \label{x100}
\end{equation}	
We again fix $\chi\in C^\infty(\mathbb{R})$ such that $\chi(x)=0$ for $x\in (-\infty,1]$, $\chi(x)=1$ for $x\in [2,\infty)$, and $\chi(x)\in [0,1]$ for $x\in [1,2]$. For $\lambda\in (0,1/4)$ and $x\in (0,\infty)$, we set 
\begin{equation*}
	\xi_\lambda(x) := \xi(x) \chi\left( \frac{1-x}{\lambda} \right)\,,
\end{equation*}
and observe that 
\begin{equation*}
	\xi_\lambda(x) = 0 \;\text{ for }\; x\in [1-\lambda,\infty) \;\;\text{ and }\;\; \xi_\lambda(x) =\xi(x) \;\text{ for }\; x\in (0,1-2\lambda]\,.
\end{equation*}
Since $\xi\in C_c^\infty((0,\infty))$ and $\chi\in C^\infty(\mathbb{R})$, we conclude that $\xi_\lambda\in C_c^\infty((0,1))$ for all $\lambda\in (0,1/4)$. Furthermore,
\begin{equation}
	\begin{split}
	\|\xi - \xi_\lambda\|_{L_2(0,1)} & \le \sqrt{2\lambda} \|\xi\|_{L_\infty(0,1)} \,, \\
	\|\xi - \xi_\lambda\|_{L_2((0,1),\rd x/x)} & \le 2 \sqrt{\lambda} \|\xi\|_{L_\infty(0,1)} \,, \\
	\|\xi' - \xi_\lambda'\|_{L_2(0,1)} & \le \frac{2}{\sqrt{\lambda}} \|\xi\|_{C^1([0,1])} \,.	
	\end{split} \label{x103}
\end{equation}
By interpolation, we infer from \eqref{x103} that 
\begin{equation*}
	\|\xi - \xi_\lambda\|_{H^{1/2}(0,1)} \le b \|\xi - \xi_\lambda\|_{H^1(0,1)}^{1/2} \|\xi - \xi_\lambda\|_{L_2(0,1)}^{1/2} \le b \|\xi\|_{C^1([0,1])}
\end{equation*}
for some positive constant $b$ depending only on $\chi$. The above estimate ensures that $(\xi_\lambda)_{\lambda\in (0,1/4)}$ is bounded in $H^{1/2}(0,1)$, while it follows from \eqref{x103} that $(\xi_\lambda)_{\lambda\in (0,1/4)}$ converges to $\xi$ in $L_2((0,1),(1+1/x)\rd x)$. Consequently, there is a sequence $(\lambda_j)_{j\ge 1}$ in $(0,1/4)$, $\lambda_j\to 0$ as $j\to\infty$, such that
\begin{align}
	& \lim_{j\to\infty} \int_0^1 \big| \xi_{\lambda_j}(x) - \xi(x)  \big|^2 \left( 1+ \frac{1}{x} \right)\ \rd x = 0 \,, \label{x104a}\\
	& \xi_{\lambda_j} \rightharpoonup \xi \;\;\text{ in }\;\; H^{1/2}(0,1)\,. \label{x104b} 
\end{align}
According to \eqref{x104b} and Mazur's lemma, there is a sequence $(\bar{\xi}_k)_{k\ge 1}$ made up of convex combinations of $(\xi_{\lambda_j})_{j\ge 1}$ such that $(\bar{\xi}_k)_{k\ge 1}$ converges strongly to $\xi$ in $H^{1/2}(0,1)$. Each $\bar{\xi}_k$ being a convex combination of $(\xi_{\lambda_j})_{j\ge 1}$, it is obvious that $\bar{\xi}_k\in C_c^\infty((0,1))$ for each $k\ge 1$ and that \eqref{x104a} entails that
\begin{equation*}
	\lim_{k\to\infty} \int_0^1 \big| \bar{\xi}_k(x) - \xi(x)  \big|^2 \left( 1+ \frac{1}{x} \right)\ \rd x = 0 \,.
\end{equation*}
Therefore, there is $k_\delta\ge 1$ such that
\begin{equation}
	\big\|\bar{\xi}_{k_\delta} - \xi \big\|_{H^{1/2}(0,1)} + \big\|\bar{\xi}_{k_\delta} - \xi \big\|_{L_2((0,1),\rd x/x)} \le \frac{\delta}{2}\,. \label{x102}
\end{equation}
Recalling that $Ef=f$ a.e. in $(0,1)$, we combine \eqref{x100} and \eqref{x102} to conclude that $\bar{\xi}_{k_\delta}$ lies in a $\delta$-neighborhood of $f$ in $H^{1/2}(0,1)\cap L_2((0,1),\rd x/x)$, thereby completing the proof.
\end{proof}

\section{Proof of Proposition~\ref{C3}}\label{sec.pp3.3}

For $M>0$, we set $\Omega_M := D \times (-H,M)$ and define, for $v\in \bar{\mathcal{S}}$,
$$
G(v)[\theta]:=\left\{
\begin{array}{ll} \dfrac{1}{2}\displaystyle\int_{ \Omega(v)} \sigma \vert\nabla (\theta+h_v)\vert^2\,\rd (x,z)\,, & \theta\in H_0^1(\Omega(v))\,,\\
	\infty\,, & \theta\in L_2(\Omega_M)\setminus H_0^1(\Omega(v))\,.
\end{array}\right.
$$
We point out that $G(v)[\theta] = \mathcal{J}(v)[\theta+h_v]$ for $\theta\in H_0^1(\Omega(v))$. The next result is devoted to the stability of $G(v)$ with respect to $v$ which we express in terms of $\Gamma$-convergence of functionals.

\begin{lemma}\label{P3}
	Consider $v\in \bar{\mathcal{S}}$ and a sequence $(v_n)_{n\ge 1}$ in $\bar{\mathcal{S}}$ satisfying \eqref{o1}. Then
	$$
	\Gamma-\lim_{n\rightarrow\infty} G(v_n)=G(v)\quad\text{in }\ L_2(\Omega_M)\,,
	$$
	where $M$ is defined in \eqref{z10} and $\Omega_M=D\times (-H,M)$.
\end{lemma}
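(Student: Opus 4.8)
The plan is to verify directly the two defining inequalities of $\Gamma$-convergence in $L_2(\Omega_M)$: the liminf (lower bound) inequality, and the existence of a recovery sequence (upper bound). Throughout, the workhorse is that \eqref{o1} together with the continuous embedding of $H^1(D)$ in $C(\bar D)$ gives uniform convergence $v_n\to v$ on $\bar D$. This has three consequences I will use repeatedly: first, $\Omega(v_n)\subset\Omega_M$ for every $n$ (by the choice of $M$ in \eqref{z10}); second, $\mathbf 1_{\Omega_i(v_n)}\to\mathbf 1_{\Omega_i(v)}$ a.e.\ in $\Omega_M$ for $i=1,2$; and third, any compact subset of $\Omega_i(v)$ is contained in $\Omega_i(v_n)$ for $n$ large. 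I shall also use \eqref{204}, which provides $h_{v_n}\to h_v$ strongly in $H^1(\Omega_M)$, and the energy convergence \eqref{203}.

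For the liminf inequality, I would take $\theta_n\to\theta$ in $L_2(\Omega_M)$ and, after passing to a subsequence realizing the liminf, assume $L:=\lim_n G(v_n)[\theta_n]<\infty$, so that $\theta_n\in H_0^1(\Omega(v_n))$ and $\int_{\Omega(v_n)}\sigma|\nabla(\theta_n+h_{v_n})|^2\le C$. Since $\sigma\ge\min\{\sigma_1,\sigma_2\}>0$ and $(h_{v_n})_{n}$ is bounded in $H^1$ by \eqref{204}, the zero-extensions of $\theta_n$ are bounded in $H^1(\Omega_M)$; a further subsequence converges weakly in $H^1(\Omega_M)$, necessarily to $\theta$. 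To see $\theta\in H_0^1(\Omega(v))$, I note that on any compact subset of $\{(x,z):v(x)+d<z<M\}$ one has $\theta_n=0$ for $n$ large (again by uniform convergence of $v_n$), whence $\theta=0$ a.e.\ on $\Omega_M\setminus\Omega(v)$; as $\Omega(v)=\{-H<z<v(x)+d\}$ is a Lipschitz (indeed subgraph) domain with $v\in C^1(\bar D)$, the zero-extension characterization of $H_0^1$ applies. For the energy bound I set $w_n:=\theta_n+h_{v_n}\rightharpoonup w:=\theta+h_v$ in $H^1(\Omega_M)$ and fix compacts $K_1\Subset\Omega_1(v)$, $K_2\Subset\Omega_2(v)$; for $n$ large, $K_i\subset\Omega_i(v_n)$ and $\sigma\equiv\sigma_i$ on $K_i$, so $\int_{\Omega(v_n)}\sigma|\nabla w_n|^2\ge\sum_i\sigma_i\int_{K_i}|\nabla w_n|^2$. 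Weak lower semicontinuity of the Dirichlet integral on the fixed sets $K_i$ gives $L\ge\sum_i\sigma_i\int_{K_i}|\nabla w|^2$, and exhausting $\Omega_i(v)$ by such compacts, together with monotone convergence, yields $L\ge G(v)[\theta]$.

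For the recovery sequence, the case $\theta\in L_2(\Omega_M)\setminus H_0^1(\Omega(v))$ is trivial since then $G(v)[\theta]=\infty$. For $\theta\in C_c^\infty(\Omega(v))$, uniform convergence gives $\operatorname{supp}\theta\subset\Omega(v_n)$ for $n$ large, so I take $\theta_n:=\theta$; expanding $\int_{\Omega(v_n)}\sigma|\nabla(\theta+h_{v_n})|^2$ into the three terms $\int\sigma|\nabla\theta|^2$, the cross term, and $\int\sigma|\nabla h_{v_n}|^2$, I pass to the limit using dominated convergence (via $\mathbf 1_{\Omega_i(v_n)}\to\mathbf 1_{\Omega_i(v)}$ and the fixed compactly supported $\nabla\theta$) for the first two terms and \eqref{203} for the third, obtaining $G(v_n)[\theta]\to G(v)[\theta]$. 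To reach an arbitrary $\theta\in H_0^1(\Omega(v))$, I invoke the general fact that the $\Gamma$-limsup functional is $L_2$-lower semicontinuous: approximating $\theta$ by functions $\theta_k\in C_c^\infty(\Omega(v))$ in $H^1(\Omega(v))$, one has $G(v)[\theta_k]\to G(v)[\theta]$, and lower semicontinuity of the $\Gamma$-limsup along $\theta_k\to\theta$ in $L_2$ transfers the bound $G''\le G(v)$ from the dense class to all of $H_0^1(\Omega(v))$.

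The main obstacle is the liminf inequality, where the moving integration domain $\Omega(v_n)$ and the simultaneously moving discontinuity of $\sigma$ across the interface $\Sigma(v_n)$ interact. The device that resolves it is to restrict, for the lower bound, to compacts lying strictly inside $\Omega_1(v)$ and $\Omega_2(v)$: on such sets both the inclusion $K_i\subset\Omega_i(v_n)$ and the identity $\sigma\equiv\sigma_i$ hold for $n$ large, so one never has to control $w_n$ near $\Sigma(v_n)$ or $\partial\Omega(v_n)$, and the exhaustion recovers the full energy. A secondary delicate point, the identification of the weak $H^1$-limit as an element of $H_0^1(\Omega(v))$, is eased by the fact that $\Omega(v)$—in contrast with the possibly cusp-laden $\Omega_1(v)$—is always Lipschitz.
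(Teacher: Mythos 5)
Your proof is correct, but it takes a genuinely different route from the paper's at essentially every step, so a comparison is in order. For the lower bound, the paper identifies $\theta\in H_0^1(\Omega(v))$ through a trace argument on the moving upper boundary that relies on the hypothesis \eqref{205} for $h_v$, and then combines weak lower semicontinuity of the weighted Dirichlet integral over all of $\Omega_M$ with an exact computation of the energy on $\Omega_M\setminus\Omega(v_n)$, where the zero-extension $\tilde\theta_n$ vanishes (see \eqref{p1}--\eqref{p2}). You instead deduce $\theta\in H_0^1(\Omega(v))$ from the vanishing of the weak $H^1$-limit on $\Omega_M\setminus\overline{\Omega(v)}$ together with the zero-extension characterization of $H_0^1$ on the Lipschitz subgraph domain $\Omega(v)$ --- which, as a by-product, makes \eqref{205} superfluous for this lemma --- and you obtain the energy inequality by localizing to compacts $K_i\Subset\Omega_i(v)$ on which $K_i\subset\Omega_i(v_n)$ and $\sigma\equiv\sigma_i$ for $n$ large, then exhausting. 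This localization neatly sidesteps the point, left implicit in the paper's \eqref{p1}, that the coefficient $\sigma$ in the $n$-th integral depends on $v_n$. For the recovery sequence, the paper solves auxiliary Dirichlet problems $-\Delta\theta_n=-\Delta\theta$ on $\Omega(v_n)$ and invokes \v{S}ver\'ak's theorem on the continuity of the Dirichlet problem under Hausdorff convergence of planar domains with a uniformly bounded number of complementary components; you use constant recovery sequences on the dense class $C_c^\infty(\Omega(v))$ and the general $L_2$-lower semicontinuity of the $\Gamma$-limsup. Both routes are sound: the paper's yields an explicit, strongly $H^1$-convergent recovery sequence for every $\theta$ (more than is needed to apply the fundamental theorem of $\Gamma$-convergence later), while yours minimizes both the machinery and the hypotheses required.
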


\begin{proof} The proof follows the lines of that of \cite[Proposition~3.11]{ARMA20}.  A first consequence of \eqref{o1} and the continuous embedding of $H_0^1(D)$ in $C(\bar D)$ is the uniform convergence
	\begin{equation}\label{o1b}
		v_n\rightarrow v\quad\text{in }\ C(\bar D)\,.
	\end{equation}
	
	\smallskip
	
	\noindent\textit{Step~1: Asymptotic Lower Semicontinuity.} Let $(\theta_n)_{n\ge 1}$ be an arbitrary sequence in $L_2(\Omega_M)$ and $\theta\in L_2(\Omega_M)$ such that
	\begin{equation}\label{t0}
		\theta_n\rightarrow \theta \ \text{ in }\ L_2(\Omega_M)\,.
	\end{equation}
	In order to prove that
	\begin{equation}\label{G1}
		G(v)[\theta]\le\liminf_{n\rightarrow\infty} G(v_n)[\theta_n]\,,
	\end{equation}
	we may assume without loss of generality that $\theta_n\in H_0^1(\Omega(v_n))$ for all $n\ge 1$ and that $(G(v_n)[\theta_n])_{n\ge 1}$ is bounded, since \eqref{G1} is clearly satisfied otherwise owing to the definition of $G$. Therefore, denoting the extension of $\theta_n$ by zero in $\Omega_M\setminus\Omega(v_n)$ by $\tilde\theta_n$, it follows from assumption~\eqref{203} that $(\tilde\theta_n)_{n\ge 1}$ is bounded in $H_0^1(\Omega_M)$, so that
	\begin{equation}\label{JJCale}
		(\tilde\theta_n)_{n\ge 1}\ \text{ is weakly relatively compact in }\ H_0^1(\Omega_M)\,.
	\end{equation}
	Introducing $\tilde\theta:=\theta\mathbf{1}_{\Omega(v)}$, we compute
	\begin{align*}
		\left\|\tilde{\theta}_n - \tilde{\theta}\right\|_{L_2(\Omega_M)}^2 & = \int_{\Omega(v_n)\cap \Omega(v)} |\theta_n-\theta|^2\, \rd (x,z) + \int_{\Omega(v_n)\cap(\Omega_M\setminus\Omega(v))} |\theta_n|^2\, \rd (x,z)\\
		& \qquad +\int_{(\Omega_M\setminus\Omega(v_n))\cap\Omega(v)} |\theta|^2\,\rd (x,z)\\
		& \le   \int_{\Omega(v_n)\cap \Omega(v)} |\theta_n-\theta|^2\,\rd (x,z) + 2 \int_{\Omega(v_n)\cap (\Omega_M\setminus\Omega(v))} |\theta_n-\theta|^2\, \rd (x,z)\\
		&  \qquad + 2 \int_{\Omega(v_n)\cap(\Omega_M\setminus\Omega(v))} |\theta|^2\, \rd (x,z) + \int_{(\Omega_M\setminus\Omega(v_n))\cap\Omega(v)} |\theta|^2\, \rd (x,z) \\
		& \le 2 \left\| \theta_n-\theta\right\|_{L_2(\Omega_M)}^2 + 2 \int_{\Omega(v_n)\cap(\Omega_M\setminus\Omega(v))} |\theta|^2\, \rd (x,z) \\
		& \qquad + \int_{(\Omega_M\setminus\Omega(v_n))\cap\Omega(v)} |\theta|^2\, \rd (x,z)\,.
	\end{align*}
	Owing to \eqref{o1b}, 
	\begin{equation*}
		\lim_{n\to \infty} \left| \Omega(v_n)\cap(\Omega_M\setminus\Omega(v)) \right| = \lim_{n\to\infty} \left| (\Omega_M\setminus\Omega(v_n))\cap\Omega(v) \right| = 0\,,
	\end{equation*}
	a property which, together with \eqref{t0} and Lebesgue's theorem, entails that the right-hand side of the above inequality converges to zero as $n\rightarrow \infty$. Consequently, $(\tilde\theta_n)_{n\ge 1}$ converges to $\tilde\theta$ in $L_2(\Omega_M)$.  This property, along with \eqref{JJCale}, implies that $\tilde\theta\in H_0^1(\Omega_M)$ and, bearing in mind that $\Omega(v)\subset \Omega_M$ and the compactness of the embedding of $H^1(\Omega(v))$ in $H^{3/4}(\Omega(v))$,
	\begin{equation}\label{hh}
		\tilde\theta_n \rightharpoonup \tilde\theta \ \text{ in }\ H^1(\Omega_M)\,,\qquad \tilde\theta_n \rightarrow \theta \ \text{ in }\ H^{3/4}(\Omega(v))\,.
	\end{equation}
	Invoking \eqref{204} and the continuity of the trace, we deduce
	\begin{equation}\label{o2}
		\tilde\theta_n+h_{v_n}\rightarrow \theta +h_{v}\quad\text{ in }\ L_2(\partial\Omega(v))\,.
	\end{equation}
	We now claim that \eqref{hh} and \eqref{o2} imply $\theta\in H_0^1(\Omega(v))$. Indeed, on the one hand, $\theta=\tilde\theta$ vanishes on $D\times \{-H\}$ and on $\{\pm L\}\times (-H,d)$. On the other hand, we infer from H\"older's inequality that
	\begin{align*}
		&\left| h_{v_n}(x,v_n(x)+d)-(\tilde\theta_n+h_{v_n})(x,v(x)+d)\right|\\
		&\qquad\qquad= \left| (\tilde\theta_n+h_{v_n})(x,v_n(x)+d) - (\tilde\theta_n+h_{v_n})(x,v(x)+d) \right|\\
		&\qquad\qquad = \left| \int_{v(x)+d}^{v_n(x)+d}\partial_z (\tilde\theta_n+h_{v_n})(x,z)\, \rd z\right|\\
		&\qquad\qquad\le | v_n(x)-v(x)|^{1/2} \left(\int_{-H}^M |\partial_z (\tilde\theta_n+ h_{v_n}) (x,z)|^2\, \rd z\right)^{1/2}
	\end{align*}
	for a.e. $x\in D$, and thus
	\begin{align*}
		& \int_D |h_{v_n}(x,v_n(x)+d)-(\tilde\theta_n+h_{v_n})(x,v(x)+d)|^2\,\rd x\\
		& \qquad\qquad \le \int_D |v_n(x)-v(x)| \int_{-H}^{M} |\partial_z (\tilde\theta_n+h_{v_n})(x,z)|^2\, \rd z\rd x\\
		&\qquad\qquad \le \frac{\|v_n-v\|_{L_\infty(D)}}{\min\{\sigma_1,\sigma_2\}} \int_{\Omega_M} \sigma |\nabla (\tilde\theta_n+h_{v_n})(x,z)|^2\, \rd (x,z)\\
		& \qquad\qquad = 2 \frac{\|v_n-v\|_{L_\infty(D)}}{\min\{\sigma_1,\sigma_2\}}\,  \left\{G(v_n)[\theta_n] +\int_{\Omega_M\setminus\Omega(v_n)}\sigma |\nabla h_{v_n}(x,z)|^2\, \rd (x,z) \right\}\,.
	\end{align*}
	Note that the sum embraced with curly brackets is bounded due to the boundedness of $(G(v_n)[\theta_n])_{n\ge 1}$ and~ \eqref{204}. Consequently, the uniform convergence ~\eqref{o1b} guarantees that the right-hand side of the above inequality converges to zero as $n\to\infty$. Hence, due to \eqref{205} and \eqref{o2}, we conclude that $\theta=0$ on $\{(x,v(x)+d)\,:\, x\in D\}$ and thus $\theta\in H_0^1(\Omega(v))$. Now, by~\eqref{204} and~\eqref{hh},
	$$
	\tilde\theta_n+h_{v_n} \rightharpoonup\tilde\theta+h_{v}\quad\text{in}\quad H_0^1(\Omega_M) \,,
	$$
	so that
	\begin{equation}\label{p1}
		\int_{\Omega_M}\sigma \vert \nabla(\tilde\theta+h_v)\vert^2\,\rd (x,z) \le
		\liminf_{n\rightarrow\infty} \int_{\Omega_M}\sigma \vert \nabla(\tilde\theta_n+h_{v_n})\vert^2\,\rd (x,z)\,.
	\end{equation}
	Since $\tilde\theta_n\in H_0^1(\Omega(v_n))$, 
	$$
	\int_{\Omega_M\setminus\Omega(v_n)}\sigma \vert \nabla(\tilde\theta_n+h_{v_n})\vert^2\,\rd (x,z)=\int_{\Omega_M\setminus\Omega(v_n)}\sigma \vert \nabla h_{v_n}\vert^2\,\rd (x,z)\,,
	$$
	and we deduce from~\eqref{204} that
	\begin{equation}
		\begin{split}\label{p2}
			\lim_{n\rightarrow\infty } \int_{\Omega_M\setminus\Omega(v_n)}\sigma \vert \nabla(\tilde\theta_n+h_{v_n})\vert^2\,\rd (x,z) & =
			\int_{\Omega_M\setminus\Omega(v)}\sigma \vert \nabla h_{v}\vert^2\,\rd (x,z)\\
			& = \int_{\Omega_M\setminus\Omega(v)}\sigma \vert \nabla (\tilde\theta+h_{v})\vert^2\,\rd (x,z)\,,
		\end{split}
	\end{equation}
	the last equality being due to $\tilde\theta\in H_0^1(\Omega(v))$.	Combining  \eqref{p1} and \eqref{p2} gives \eqref{G1}.
	
	\smallskip
	
	\noindent\textit{Step~2: Existence of a Recovery Sequence.} We only need to provide a recovery sequence for $\theta\in H_0^1(\Omega(v))$, in view of the definition of the functional $G(v)$. Note that $\theta\in H_0^1(\Omega_M)$ and that $f:=-\Delta \theta\in H^{-1}(\Omega_M)$ can be considered also as an element of $H^{-1}(\Omega(v_n))$ by restriction. Let now $\theta_n\in H_0^1(\Omega(v_n))$ denote the unique weak solution to
	$$
	-\Delta \theta_n=f \quad\text{in }\ \Omega(v_n)\,,\qquad \theta_n=0 \quad\text{on }\ \partial\Omega(v_n)\,.
	$$
	Since the Hausdorff distance $d_H$ in $\Omega_M$ (see \cite[Section~2.2.3]{HP05}) satisfies
	$$
	d_H(\Omega(v_n),\Omega(v)) \le \|v_n - v \|_{L_\infty(D)}\rightarrow 0
	$$
	by \eqref{o1b} and since $\overline{\Omega_M}\setminus\Omega(v_n)$ has a single connected component for every $n\ge 1$ as $v_n\ge -H$, it follows from \cite[Theorem~4.1]{Sv93} and \cite[Theorem~3.2.5]{HP05} that 
	$\theta_n\rightarrow \hat\theta$ in $H_0^1(\Omega_M)$,
	where $\hat\theta\in H_0^1(\Omega_M)$ is the unique weak solution to
	\begin{equation*}
		-\Delta \hat\theta=f =-\Delta\theta \quad\text{in }\ \Omega_M\,,\qquad \hat\theta=0 \quad\text{on }\ \partial\Omega_M\,. \label{z11}
	\end{equation*}
	Since $\hat\theta$ and $\theta$ both belong to $H_0^1(\Omega_M)$, we readily deduce from the above equation that $\hat\theta=\theta$, so that $\theta_n\rightarrow \theta$ in $H_0^1(\Omega_M)$. Since $\theta_n\in H_0^1(\Omega(v_n))$ and $\theta\in H_0^1(\Omega(v))$, this convergence yields, along with \eqref{204} and \eqref{203},
	\begin{align*}
		\int_{\Omega(v)}\sigma | \nabla(\theta+h_v)|^2\,\rd (x,z)
		& = \int_{\Omega(v)}\sigma \left( |\nabla\theta|^2 + 2 \nabla\theta\cdot \nabla h_v + |\nabla h_v|^2\right)\,\rd (x,z)\\
		& = \lim_{n\rightarrow\infty} \int_{\Omega_M}\sigma \left( |\nabla\theta_n|^2 + 2 \nabla\theta_n\cdot \nabla h_{v_n}\right)\,\rd (x,z)\\
		&\qquad + \lim_{n\rightarrow\infty} \int_{\Omega(v_n)}\sigma |\nabla h_{v_n}|^2\,\rd (x,z)\\
		&= \lim_{n\rightarrow\infty}\int_{\Omega(v_n)}\sigma |\nabla(\theta_n +h_{v_n})|^2\,\rd (x,z)\,;
	\end{align*}
	that is,
	$$
	G(v)[\theta]=\lim_{n\rightarrow\infty} G(v_n)[\theta_n]\,.
	$$
	Hence, $(\theta_n)_{n\ge 1}$ is a recovery sequence for $\theta$. 
	
	\smallskip 
	
	Thanks to the just established two properties, we have proved the $\Gamma$-convergence of $(G(v_n))_{n\ge 1}$ to $G(v)$ in $L_2(\Omega_M)$.
\end{proof}

Proposition~\ref{C3} is now an almost immediate consequence of Lemma~\ref{P3}.

\begin{proof}[Proof of Proposition~\ref{C3}]  For $n\ge 1$, set 
	$$
	\chi_n:=\psi_{v_n}-h_{v_n}\in H_0^1(\Omega(v_n))\subset H_0^1(\Omega_M)\,,
	$$ 
	and recall that $\chi_n$ is a minimizer of $G(v_n)$ in $H_0^1(\Omega(v_n))$ by Lemma~\ref{L1} and $\Omega_M = D\times (-H,M)$. Since  $(v_n)_{n\ge 1}$ is bounded in $H^1(D)$, it follows from Lemma~\ref{L1}, Lemma~\ref{LP}, and~\eqref{203} that $(\chi_n)_{n\ge 1}$ is bounded in $H_0^1(\Omega_M)$. Hence, there are a subsequence $(n_j)_{j\ge 1}$ and $\chi\in H_0^1(\Omega_M)$ such that $\chi_{n_j}\rightarrow \chi$ in $L_2(\Omega_M)$ and $\chi_{n_j}\rightharpoonup \chi$ in $H_0^1(\Omega_M)$. By Lemma~\ref{P3} and the Fundamental Theorem of $\Gamma$-Convergence, see \cite[Corollary~7.20]{DaM93}, $\chi$ is a minimizer of the functional $G(v)$ on $L_2(\Omega_M)$. Clearly, from the definition of $G(v)$, we see that $\chi+h_v\in \mathcal{A}(v)$ minimizes the functional $\mathcal{J}(v)$ on $\mathcal{A}(v)$, hence $\psi_v=\chi+h_v$ by Lemma~\ref{L1}. Consequently, $\chi=\psi_v-h_v$ is the unique cluster point of the sequence $(\chi_n)_{n\ge 1}$ in $L_2(\Omega_M)$ and this sequence is compact in that space and weakly compact in $H_0^1(\Omega_M)$. Combining these properties leads us to conclude that $\chi_{n}\rightarrow \chi$ in $L_2(\Omega_M)$ and $\chi_{n}\rightharpoonup \chi$ in $H_0^1(\Omega_M)$. Moreover, the Fundamental Theorem of $\Gamma$-Convergence, see  \cite[Corollary~7.20]{DaM93}, also ensures that
	$$
	\lim_{n\to\infty} \mathcal{J}(v_n)[\psi_{v_n}] = \lim_{n\to\infty} G(v_n)[\chi_n] = G(v)[\chi] =\mathcal{J}(v)[\psi_v]\,.
	$$ 
	In particular, this property, along with~\eqref{204}, implies that 
	$$
	\lim_{n\rightarrow\infty} \|\chi_n\|_{H_0^1(\Omega_M)}= \|\chi\|_{H_0^1(\Omega_M)}\,.
	$$
	Since $(\chi_n)_{n\ge 1}$ converges weakly to $\chi$ in $H_0^1(\Omega_M)$, this gives the strong convergence of $(\chi_n)_{n\ge 1}$ in $H_0^1(\Omega_M)$.
\end{proof}

\section{Proof of Proposition~\ref{ACDC}}\label{pr.acdc}

We first establish Lemma~\ref{lez1} which is the building block of the proof of Proposition~\ref{ACDC}.

\begin{proof}[Proof of Lemma~\ref{lez1}] It first follows from \eqref{vh2} and the continuous embedding of $H^2(D)$ in $C^1(\bar{D})$ that
	\begin{equation*}
		M = d + \max\left\{ \|v\|_{L_\infty(D)} \,,\, \sup_{n\ge 1}\{\|v_n\|_{L_\infty(D)}\} \right\} < \infty 
	\end{equation*}
	and 
	\begin{equation}
		\lim_{n\to\infty} \|v_n-v\|_{C^1(\bar{D})} = 0\,. \label{z100}
	\end{equation}
	We may then apply Proposition~\ref{C3} to obtain that
	\begin{equation}
		\psi_{v_n}-h_{v_n}\rightarrow  \psi_v-h_v\quad\text{in }\ H_0^1(D \times (-H,M))\,. \label{queen}
	\end{equation}
	
	Now, fix $i\in\{1,2\}$ and let  $U_i$ be any open subset of $\Omega_i(v)$ such that $\bar{U}_i$ is a compact subset of $\Omega_i(v)$. Owing to \eqref{z100}, there is an integer $N\ge 1$ such that $U_i\subset \Omega_i(v_n)$ for $n\ge N$. We then infer from \eqref{204}, \eqref{uh2}, and \eqref{queen} that $(\psi_{v_n,i})_{n\ge N} $ is bounded in $H^2(U_i)$ and $\psi_{v_n,i}\to \psi_{v,i}$ in $H^1(U_i)$ as $n\to\infty$. Thus, $(\psi_{v_n,i})_{n\ge N}$ is a weakly compact sequence in $H^2(U_i)$ which has a unique cluster point $\psi_{v,i}$ for that topology, so that $\psi_{v,i}\in H^2(U_i)$ and \begin{equation}
		\psi_{v_n,i}\rightharpoonup \psi_{v,i} \;\;\text{ in }\;\; H^2(U_i) \;\;{ as }\;\; n\to\infty\,. \label{100}
	\end{equation} 
	In particular, we deduce from \eqref{uh2} and \eqref{100} that
	\begin{align*}
		\int_{U_i} |\partial_x^l\partial_z^k \psi_{v,i} |^2\,\rd (x,z) & \le \liminf_{n\rightarrow \infty} \int_{U_i} |\partial_x^l\partial_z^k \psi_{v_n,i} |^2\,\rd (x,z) \\
		& \le \sup_{n\ge 1}\left\{ \int_{\Omega_i(v_n)} |\partial_x^l\partial_z^k \psi_{v_n,i}\vert^2\,\rd (x,z) \right\}
	\end{align*}
	for $(l,k)\in \{(2,0),(1,1),(0,2)\}$. We then use Fatou's lemma to conclude that $\partial_x^l\partial_z^k \psi_{v,i}$ belongs to $L_2(\Omega_i(v))$ for $(l,k)\in \{(2,0),(1,1),(0,2)\}$ with
	\begin{equation*}
		\int_{\Omega_i(v)} |\partial_x^l\partial_z^k \psi_{v,i} |^2\,\rd (x,z) \le \sup_{n\ge 1}\left\{ \int_{\Omega_i(v_n)} |\partial_x^l\partial_z^k \psi_{v_n,i}\vert^2\,\rd (x,z) \right\}\,.
	\end{equation*}
	Therefore, $\psi_{v,i}\in H^2(\Omega_i(v))$ and \eqref{z101} readily follows from \eqref{uh2} and the above estimate.
\end{proof}

\begin{proof}[Proof of Proposition~\ref{ACDC}~\textbf{(a)}]
	Let $v\in \bar{\mathcal{S}}$ be such that $\|v\|_{H^2(D)}\le \kappa$. We may choose a sequence $(v_n)_{n\ge 1}$ in $\mathcal{S}\cap W_\infty^2(D)$ satisfying 
	\begin{equation}
		v_n\rightarrow v \ \text{ in }\ H^2(D)\,,\qquad \sup_{n\ge 1}\,\|v_n\|_{H^2(D)}\le 2\kappa\,. \label{z103}
	\end{equation}
	Owing to \eqref{z103} and the regularity property $v_n\in \mathcal{S}\cap W_\infty^2(D)$, $n\ge 1$, Proposition~\ref{C3} guarantees that $(\psi_{v_n,1},\psi_{v_n,2})$ belongs to $H^2(\Omega_1(v_n))\times H^2(\Omega_2(v_n))$ and $(\psi_{v_n})_{n\ge 1}$ satisfies \eqref{uh2} with $\mu=c_0(2\kappa)$. We then infer from Lemma~\ref{lez1} that $(\psi_{v,1},\psi_{v,2})$ belongs to $H^2(\Omega_1(v))\times H^2(\Omega_2(v))$ and satisfies
	\begin{equation*}
		\|\psi_{v,1}\|_{H^2(\Omega_1(v))} + \|\psi_{v,2}\|_{H^2(\Omega_2(v))} \le c_0(2\kappa)\,. 
	\end{equation*}
	Combining the above bound with \eqref{204} and Lemma~\ref{lez1} gives \eqref{king3}. 
	
We now check that $\psi_v$ is a strong solution to \eqref{psiS}. 	As a minimizer of $\mathcal{J}(v)$ on $\mathcal{A}(v)$ according to Lemma~\ref{L1}, the function $\psi_v$ satisfies
	\begin{equation}\label{e1}
		\int_{\Omega(v)}\sigma\nabla\psi_v\cdot\nabla \theta\,\rd (x,z)=0\,,\quad \theta\in H_0^1(\Omega(v))\,.
	\end{equation}
	Thus, since $(\psi_{v,1},\psi_{v,2})\in H^2(\Omega_1(v))\times H^2(\Omega_2(v))$ it readily follows that $\mathrm{div}(\sigma\nabla\psi_v)=0$ in $\Omega(v)$ as claimed in \eqref{a1aS}. Moreover, owing to $v\in C(\bar D)$, we can write the open set $\{x\in D\,:\, v(x)>-H\}$  as a countable union of open intervals $((a_i,b_i))_{i\in I}$, see \cite[IX.Proposition~1.8]{AEIII}. 
	Let $i\in I$ and set
	$$
	O_i(v):=\{(x,z)\in (a_i,b_i)\times\R\,:\, -H<z<v(x)+d\}\subset\Omega(v)\,.
	$$  For each $\theta\in \mathcal{D}(O_i(v))$ we infer from \eqref{e1} and Gau\ss' theorem that
	\begin{equation*}
		0=\int_{O_i(v)}\sigma\nabla\psi_v\cdot\nabla \theta\,\rd (x,z)  =\int_{a_i}^{b_i} \left(\llbracket \sigma\nabla \psi_v \rrbracket \cdot {\bf n}_{ \Sigma(v)}\theta\right) (x,v(x))\,\rd x\,,
	\end{equation*}
	hence $\llbracket \sigma\nabla \psi_v \rrbracket \cdot {\bf n}_{ \Sigma(v)} (\cdot,v(\cdot))=0$ a.e. in $(a_i,b_i)$. Therefore, $\llbracket \sigma\nabla \psi_v \rrbracket \cdot \mathbf{n}_{ \Sigma(v)} =0$ on $\Sigma(v)$ as stated in \eqref{a1bS}. Finally, since $\psi_v\in H^1(\Omega(v))$ we have $\llbracket \psi_v \rrbracket =0$ on $\Sigma(v)$, while \eqref{a1cS} is due to $\psi_v\in\mathcal{A}(v)$.
\end{proof}

\begin{proof}[Proof of Proposition~\ref{ACDC}~\textbf{(b)}]
	Proposition~\ref{ACDC}~\textbf{(b)} is now a straightforward consequence of Proposition~\ref{ACDC}~\textbf{(a)} and Lemma~\ref{lez1}.
\end{proof}

\bibliographystyle{siam}
\bibliography{MEMSBIB}

\end{document}